\documentclass{amsart}

\usepackage{amscd}
\usepackage[all]{xy}
\usepackage{slashed}
\usepackage{todonotes}

\usepackage{comment}

\usepackage{hyperref}

\newtheorem{thm}{Theorem}[section]

\newtheorem{question}{Question}

\theoremstyle{remark}
\newtheorem{rem}{Remark}[section]

\newcommand{\Z}{\mathbb{Z}}

\newcommand{\R}{\mathbb{R}}
\newcommand{\C}{\mathbb{C}}

\newcommand{\CP}{\mathbb{CP}}

\newcommand{\spc}{\mathrm{spin}^c}

\newcommand{\Gr}{\mathrm{Gr}}

\newcommand{\fraks}{\mathfrak{s}}
\newcommand{\frakt}{\mathfrak{t}}

\newcommand{\PSC}{\mathrm{PSC}}

\newcommand{\Pin}{\mathrm{Pin}}
\newcommand{\Diff}{\mathrm{Diff}}
\newcommand{\Homeo}{\mathrm{Homeo}}
\newcommand{\Symp}{\mathrm{Symp}}
\newcommand{\Aut}{\mathrm{Aut}}

\newcommand{\Th}{\mathrm{Th}}

\newcommand{\inc}{\hookrightarrow}

\newcommand{\Ker}{\mathop{\mathrm{Ker}}\nolimits}
\newcommand{\Coker}{\mathop{\mathrm{Coker}}\nolimits}

\newcommand{\rank}{\mathop{\mathrm{rank}}\nolimits}

\newcommand{\id}{\mathrm{id}}
\newcommand{\ind}{\mathop{\mathrm{ind}}\nolimits}
\providecommand\sslash{\mathbin{/\mkern-5.5mu/}}

\numberwithin{equation}{section}

\begin{document}

\title{AMS Journal Sample}

\author{Hokuto Konno}
\curraddr{Graduate School of Mathematical Sciences, the University of Tokyo, 3-8-1 Komaba, Meguro, Tokyo 153-8914, Japan}
\email{konno@ms.u-tokyo.ac.jp}
\thanks{The author is grateful to Mikio Furuta, Nobuhiro Nakamura, and Masaki Taniguchi, as well as the referees, for reading the draft and offering valuable comments. The author was supported in part by JSPS KAKENHI Grant Numbers 25K00908.}

\dedicatory{Dedicated to Mikio Furuta}


\title{Gauge theory for families}

\maketitle

\begin{abstract}
This article surveys gauge theory for families and its applications to the comparison between the diffeomorphism group and the homeomorphism group of $4$-manifolds, up to 2021.
\end{abstract}

\section{Introduction: $\Diff$ vs. $\Homeo$ in Dimension Four}
\label{sec1}

This article surveys {\it gauge theory for families} up to 2021\footnote{This is a translation of a Japanese article originally written in 2021. In what follows, for papers published after 2021, we mention only those references that are directly related to the topics discussed in the original 2021 version of this survey; post-2021 developments are treated in \cite{KonnoRonsetsu2}.}, which develops gauge theory for smooth families of 4-manifolds, and its applications to comparing the diffeomorphism groups and the homeomorphism groups of 4-manifolds.
In this section, we first formulate the concrete problems we wish to study and place them in the context of phenomena in other dimensions.

\subsection{Main question}
Many of the results presented in this article can be stated as answers to the following question~\ref{main question}:

\begin{question}
\label{main question}
Let $X$ be a smooth $4$-manifold.
Consider the natural inclusion map from the diffeomorphism group to the homeomorphism group
\begin{align*}
\Diff(X) \inc \Homeo(X)
\end{align*}
Is this a weak homotopy equivalence?
If not, for which values of $n$ is the induced map on homotopy groups
\begin{align}
\label{eq: main comparison map on htpy grp}
\pi_{n}(\Diff(X)) \to \pi_{n}(\Homeo(X))
\end{align}
not an isomorphism?
And when it fails to be an isomorphism, does it fail to be injective, surjective, or both?
\end{question}

Question~\ref{main question} focuses on the part of the classification problem for fiber bundles with fiber a $4$-manifold where the topological and smooth categories differ, and rephrases this in terms of the automorphism groups $\Homeo(X)$ and $\Diff(X)$.
It can be regarded as the fiber-bundle version of one of the most classical problems in differential topology:
namely, finding pairs of exotic manifolds, or finding topological manifolds that admit no smooth structure (non-smoothable topological manifolds).
Both are fundamental problems closely related to the very existence of differential topology itself.
In dimension four, it is well known that gauge theory—where one studies partial differential equations coming from physics on smooth manifolds—provides a powerful tool for resolving these problems.
What, then, is the family version or fiber-bundle version of these problems?
Let us define two fiber bundles over a common base space $B$ with common fiber $X$ to be {\it exotic as families} if they are isomorphic as topological bundles but not isomorphic as smooth bundles.
If there exists an exotic pair of families with base space the $(n+1)$-sphere, this means that the map~\eqref{eq: main comparison map on htpy grp} fails to be injective.
Similarly, we say that a topological fiber bundle with fiber a smooth manifold $X$ is a {\it non-smoothable family} if its structure group cannot be reduced from $\Homeo(X)$ to $\Diff(X)$.
The existence of a non-smoothable family over the $(n+1)$-sphere means that the map~\eqref{eq: main comparison map on htpy grp} fails to be surjective.

\subsection{Comparison with other dimensions}

An analogue of Question~\ref{main question} can of course be considered in dimensions other than four.
To explain why dimension four is of particular interest, let us compare with the situation in other dimensions.
As is well known, roughly speaking, in dimensions $ \leq 3$ there is no essential difference between the topological and smooth categories.
(For instance, any topological manifold of dimension $\leq 3$ admits a smooth structure unique up to diffeomorphism; see e.g.\ \cite{Ra1925,Moi52,HM74}.)
In contrast, in all dimensions $\geq 4$, the topological and smooth categories do differ (for example, in every dimension $\geq 4$ there exist topological manifolds that admit no smooth structure; see e.g.\ \cite[1.8.4 Corollary]{Rud16}).
Moreover, in dimensions $\leq 3$ the above slogan holds even at the level of automorphism groups, in the sense that (see e.g.\ \cite{FM12,Hat80}) for any closed oriented smooth manifold $X$ of dimension $\leq 3$, the inclusion $ \Diff(X) \inc \Homeo(X)$ is a weak homotopy equivalence.
Thus, in dimensions $\leq 3$, the answer to the analogue of the first question (``Is $\Diff(X) \inc \Homeo(X)$ a weak homotopy equivalence?'') is always ``Yes''.
By contrast, in higher dimensions there are many known examples of manifolds for which this inclusion is not a weak homotopy equivalence (for example, the sphere of one dimension lower than the dimensions $\geq 5$ in which exotic spheres exist is such a manifold. See, for instance, the first subsection of Part~1~\cite{Mil07}).
It is therefore natural to ask what happens in dimension four, where the difference between the topological and smooth categories first appears.
Until recently, however, very few examples of $4$-manifolds $X$ had been known for which the inclusion $ \Diff(X) \inc \Homeo(X)$ is not a weak homotopy equivalence.

\subsection{Gauge theory for families and its applications}

Gauge theory for families provides a tool for breaking this impasse.
In a nutshell, the idea is to consider continuous families of the partial differential equations appearing in gauge theory along a bundle of $4$-manifolds, use them to construct invariants of families, and deduce constraints coming from the fact that the structure group is $\Diff(X)$.
Gauge theory has been a tool exquisitely sensitive to the smooth structure of $4$-manifolds; by extending this framework to bundles, we obtain a means of distinguishing smooth bundles of 4-manifolds.

While applications of gauge theory to low-dimensional topology go back to Donaldson~\cite{Do83}, it is only relatively recently that gauge theory has been systematically developed for families of $4$-manifolds and applied to the study of diffeomorphism groups of $4$-manifolds.
Aside from Ruberman's pioneering results around the late 1990s~\cite{Rub98,Rub99,Rub01} and Nakamura's results soon thereafter~\cite{Naka03,Naka10}, topological applications of gauge theory for families had long remained largely unexplored.
After around 2015, many mathematicians began working in this area, leading to a remarkable series of results \cite{K16,K17,K19,B19,BK19,BK20,KT20,B20,B202,KN20,KM20,JL20,Smi20,Smi202,K21,KKN21,BK21,Ba21,B21,Smi21,LM21}
\footnote{Again, we should remark that this list only includes papers up to 2021. By now (Fall 2025), such a list has become even more extensive.}.

\subsection{Organization of the Paper}

The structure of this article is as follows.
In Section~\ref{section: Summary of the result on Diff vs. Homeo in 4D}, we summarize results answering Question~\ref{main question}—that is, results comparing the homotopy types of $\Diff(X)$ and $\Homeo(X)$.
Starting in Section~\ref{section: how to apply gauge theory to 4D top}, we explain the gauge-theoretic tools used in the proofs of these results.
Section~\ref{section: how to apply gauge theory to 4D top} reviews classical gauge theory for nonexperts as preparation for the discussion of families.
In Section~\ref{section: how to consider families gauge theory}, we describe the basic ideas of gauge theory for families.
Section~\ref{section: fin dim approx family} is the technical core of the paper, where we explain the finite-dimensional approximation of the families Seiberg--Witten equations and the resulting constraints on smooth families of $4$-manifolds.
In Section~\ref{section : whatelse} we briefly mention several additional topics.

\subsection{Notation}

We fix some notation to be used throughout this article.
For an oriented closed topological $4$-manifold $X$, we write $b^{+}(X)$ and $b^{-}(X)$ for the maximal dimensions of the positive- and negative-definite subspaces of $H^{2}(X;\R)$ with respect to the intersection form, and we write $\sigma(X)$ for the signature of $X$, i.e.\ $\sigma(X)=b^{+}(X)-b^{-}(X)$.
We write $-X$ for $X$ with reversed orientation, and $nX$ for the connected sum of $n$ copies of $X$.
The underlying differentiable manifold of a $K3$ surface (which is unique by \cite{Kod64}) is denoted by $K3$.

We also write $\Homeo(X)$ for the homeomorphism group of a topological manifold $X$ and $\Diff(X)$ for the diffeomorphism group of a smooth manifold $X$, each endowed with the $C^{0}$-topology and $C^{\infty}$-topology respectively.
(Thus the set-theoretic inclusion $\Diff(X)\inc\Homeo(X)$ is continuous but is not an inclusion of topological spaces.)
When $X$ is oriented, we write $\Homeo^{+}(X)$ and $\Diff^{+}(X)$ for the orientation-preserving homeomorphism and diffeomorphism groups, respectively.
If $X$ is an oriented closed $4$-manifold with nonzero signature, then there are no orientation-reversing self-homeomorphisms (since such a map would change the sign of the signature), so that
\[
\Homeo^{+}(X)=\Homeo(X),\quad \Diff^{+}(X)=\Diff(X).
\]
We will often write a fiber bundle with fiber $X$ over base $B$ as $X \to E \to B$.

\section{Results on $\Diff$ vs.\ $\Homeo$ in Dimension Four}
\label{section: Summary of the result on Diff vs. Homeo in 4D}

In this section, we summarize results formulated as comparisons between the diffeomorphism group and the homeomorphism group of a $4$-manifold $X$ via the natural map
\[
\pi_{n}(\Diff(X)) \to \pi_{n}(\Homeo(X)),
\]
that is, as answers to Question~\ref{main question}.
Not strictly in chronological order, up to Section~\ref{subsection: higher homotopy groups} we list results on the failure of surjectivity and injectivity of $\pi_{n}(\Diff(X)) \to \pi_{n}(\Homeo(X))$.
The results of Section~\ref{subsection: higher homotopy groups} are based on the idea of detecting non-smoothable families by means of gauge theory for families. Various further consequences of this idea are collected from Section~\ref{section: from non-smoothable to Diff Homeo} onward.

Although our main goal is to present concrete results comparing $\Diff$ and $\Homeo$, we will also mention some purely topological statements in this section.
Explanations of the gauge-theoretic tools used in the proofs are postponed to later sections; here we only indicate which tools are used.

\subsection{Classical results}
\label{section: classical results}

We begin by recalling classical results about diffeomorphism and homeomorphism groups of $4$-manifolds.
For an oriented closed (topological or smooth) $4$-manifold $X$, write
$\Aut(H^{2}(X;\Z))$ for the automorphism group of $H^{2}(X;\Z)/{\rm torsion}$ endowed with the intersection form.
(Below we will consider only the case where $H^{2}(X;\Z)$ is torsion-free--for example, when $X$ is simply-connected.)
The following result of Wall is fundamental:

\begin{thm}[Wall~\cite{Wa64}]
\label{thm: Wall}
Let $M$ be an oriented smooth simply-connected closed $4$-manifold.
Assume at least one of the following holds: (i) the intersection form of $M$ is indefinite; (ii) $b_{2}(M)\le 8$.
Then the natural map
\[
\pi_{0}(\Diff^{+}(M\# S^{2}\times S^{2})) \to \Aut(H^{2}(M\# S^{2}\times S^{2};\Z))
\]
is surjective.
\end{thm}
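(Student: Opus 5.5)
Write $N = M\# S^2\times S^2$ and $L = H^2(N;\Z)$ with its intersection form. We have $L \cong H^2(M;\Z)\oplus H$, where $H$ is the hyperbolic plane. Since $M\#(S^2\times S^2)$ has indefinite form, the stable range is available.

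The plan follows Wall's algebraic approach in two steps. First, show that $\Aut(L)$ is generated by a specific, explicit list of automorphisms. Second, realize each generator by an orientation-preserving diffeomorphism of $N$. Surjectivity then follows because the image of $\pi_0(\Diff^+(N))\to\Aut(L)$ is a subgroup containing a generating set.

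For the algebraic step I would use Wall's theorem on generation of orthogonal groups of unimodular lattices containing a hyperbolic summand. The group generated by reflections in vectors of square $\pm1$ or $\pm2$, together with $-1$ and the automorphisms of $H^2(M)$ extended by the identity, includes the Eichler transvections $E_{e,x}$, $E_{f,x}$ for $x\in H^2(M)$. Here $e,f$ is a hyperbolic basis of $H$, and
\[
E_{e,x}(y)=y+(y\cdot e)x-(y\cdot x)e-\tfrac12(x\cdot x)(y\cdot e)e .
\]
These transvections act transitively on primitive vectors of given square and type. By induction on rank (splitting off $H$), this transitivity shows that $\Aut(L)$ is generated by
\begin{enumerate}
\item the transvections,
\item $\Aut(H)$,
\item the stabilizer of the summand $H$.
\end{enumerate}
The stabilizer of $H$ acts on $H^2(M)$, so we reduce to $\Aut(H^2(M))$. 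Under hypothesis (i), $H^2(M)$ is indefinite, so by the classification of indefinite unimodular forms it is $a\langle1\rangle\oplus b\langle-1\rangle$ or $kE_8\oplus lH$. In that case $\Aut(H^2(M))$ is itself generated by reflections and transvections, so the induction closes. Under hypothesis (ii), $b_2\le 8$ and $H^2(M)$ is definite, hence $\pm$ diagonal by Donaldson's theorem, or directly by classification in rank $\le 8$; $E_8$ itself has rank 8 but is not realized smoothly. Its automorphism group is the signed permutation group, generated by reflections in the square-$\pm1$ basis vectors and in the differences $e_i-e_j$.

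For the geometric step, each generator has to be realized by a diffeomorphism.
\begin{itemize}
\item Automorphisms of $H$: realized by diffeomorphisms of $S^2\times S^2$ (swap factors, conjugations) fixing a disk, extended by the identity over $M$.
\item Reflections in a square $\pm1$ class represented by an embedded sphere: realized by the complex conjugation on a $\pm\CP^2$ summand.
\item Reflections in a square $\pm2$ sphere: realized by the Dehn–Seidel twist, or by swapping factors.
\item Transvections: realized by the standard Wall diffeomorphisms. Represent $x$ by an immersed surface, and tube it into the $S^2\times S^2$ summand to get embedded spheres, which is possible after one stabilization. Then compose reflections: $E_{e,x}$ can be written as a product of reflections in the classes $e - f$ and $e-f+x$ type vectors of square $\pm2$, which are represented by embedded spheres in $N$ because they have nonzero component in $H$.
\end{itemize}

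The main obstacle is this last representability point. One must show that every class in $L$ of square $\pm1$ or $\pm2$ that the generating set requires is represented by a smoothly embedded sphere in $N$. Wall's approach handles this by realizing only reflections in vectors of the specific forms $v+e+\tfrac{?}{}f$ with $v\in H^2(M)$, where a sphere is built by tubing a representative of $v$ to copies of the $S^2$ factors. I would first try to arrange the algebraic generating set so that only such vectors occur, and then verify embeddedness by this explicit tubing construction.
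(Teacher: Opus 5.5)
The paper only quotes this result from Wall and gives no proof, so I am judging your outline on its own. It has two genuine gaps.

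\textbf{First gap: the reduction to $\Aut(H^2(M))$.} Your step ``fix $e,f$ by transvections, then treat $\Aut(H^2(M))$ by induction on rank'' fails, both algebraically and geometrically.

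Algebraically, splitting $H$ off $L$ can leave a lattice with $\min(b^+,b^-)=1$, and there the transitivity you rely on is false. Take $M=\CP^2\#20\overline{\CP}^2$, which satisfies (i). In $H^2(M)\cong\langle1\rangle\oplus20\langle-1\rangle$ there are primitive isotropic ordinary vectors $v$ with $v^\perp/v$ isometric to $19\langle-1\rangle$, and others with $v^\perp/v$ isometric to $-E_8\oplus11\langle-1\rangle$. These cannot lie in one $\Aut$-orbit. Moreover, by Vinberg the reflection subgroup of this lattice has infinite index, so the induction cannot close. Your transitivity claim also fails as stated in case (ii). In $L=8\langle-1\rangle\oplus H\cong\langle1\rangle\oplus9\langle-1\rangle$ there are ordinary square-$1$ vectors with orthogonal complements $9\langle-1\rangle$ and $-E_8\oplus\langle-1\rangle$, respectively.

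Geometrically there is nothing to induct on. An algebraic splitting $H^2(M)\cong L_1\oplus H$ does not come from a smooth splitting $M\cong M_1\#S^2\times S^2$. Elements of $\Aut(H^2(M))\oplus\id_H$, or $-1$, are in general not realized by diffeomorphisms of $M$; for $M=K3$ this is exactly the Matumoto--Donaldson result recalled in the paper. They have to be realized in $N$ using classes with nonzero $H$-component. So you must work with $L$ itself: under (i) one has $\min(b^+(L),b^-(L))\ge2$, which is the range of Wall's algebraic results. Case (ii) needs a separate argument.

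\textbf{Second gap: the geometric realization.} This is the real content of the theorem, and it is not carried out; the one concrete step you give is wrong.
\begin{enumerate}
\item $R_{e-f+x}R_{e-f}$ fixes $e+f$, since $e+f$ is orthogonal to both $e-f$ and $x$. But $E_{e,x}(e+f)=e+f+x-\tfrac12(x\cdot x)e$. So this product is not $E_{e,x}$. When $x\cdot x=0$ it is the Eichler transformation along the isotropic vector $x$ instead.
\item Any factorization of $E_{e,x}$ into two reflections uses vectors in $\mathbb{Q}e+\mathbb{Q}x$, which have zero $f$-coefficient. For $x\cdot x=\pm2$ these vectors are $x+pe$ and $x+(p\pm1)e$. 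One of them has even $e$-coefficient, so it is not of the form $v\pm e+kf$ or $v+ke\pm f$ that your tubing (Norman-trick) construction represents by an embedded sphere.
\item Your formula for $E_{e,x}$ is integral only when $x\cdot x$ is even.
\item The reflections in $e_i$ and $e_i-e_j$ you use in case (ii) live in $H^2(M)$, and you never realize them.
\end{enumerate}

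What is missing is a mechanism that produces a realized subgroup $G\subset\Aut(L)$ acting transitively on primitive classes of given square and type. One way is a direct construction of diffeomorphisms of $N$ inducing $E_{e,x}$ and $E_{f,x}$ for all admissible $x$. Then each member of an algebraic generating set for $\Aut(L)$ (Wall's results supply reflections in classes of square $\pm1,\pm2$) is $G$-conjugate to a reflection in a standard sphere. Examples are $e\pm f$, or $\CP^1$ in a $\pm\CP^2$ summand, using $M\#S^2\times S^2\cong M\#\CP^2\#\overline{\CP}^2$ for nonspin $M$. Characteristic classes need extra care, since by Kervaire--Milnor they need not be represented by spheres. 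Your last paragraph explicitly leaves exactly this step open.
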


Outside the range where Wall's theorem applies, determining the image of $\pi_{0}(\Diff^{+}(X)) \to \Aut(H^{2}(X;\Z))$ is a difficult problem.
In the special case of the $K3$ surface, combining a result of Y.~Matsumoto \cite{Matu86} with Donaldson's theorem (\cite{Do90}, Theorem~\ref{theo: Donaldson, Morgan-Szabo}) yields a complete answer:
an element $A\in \Aut(H^{2}(K3;\Z))$ lies in the image of $\pi_{0}(\Diff^{+}(K3)) \to \Aut(H^{2}(K3;\Z))$ if and only if $A$ preserves orientation of a positive definite $3$-dimensional subspace $H^{+}(K3)\subset H^{2}(K3;\R)$.

The automorphism group of the intersection form is comparatively accessible; remarkably, in the simply-connected case it actually agrees with the topological mapping class group:

\begin{thm}[Quinn~\cite{Q86}\footnote{A gap in \cite{Q86} was recently pointed out and corrected in \cite{gabai2023pseudoisotopies}.}, Perron~\cite{P86} (both 1986)]
\label{thm: QuinnPerron}
Let $X$ be an oriented simply-connected topological closed $4$-manifold.
Then the natural map
\[
\pi_{0}(\Homeo^{+}(X)) \to \Aut(H^{2}(X;\Z))
\]
is an isomorphism.
\end{thm}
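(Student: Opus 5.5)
The theorem has two parts, surjectivity and injectivity. I would treat them separately, since they rest on different deep inputs.

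\emph{Surjectivity.} This follows Freedman's classification. Let $A\in\Aut(H^{2}(X;\Z))$. By Freedman, a simply-connected closed topological $4$-manifold is determined up to homeomorphism by its intersection form and Kirby--Siebenmann invariant. The relative version of Freedman's argument also realizes any isometry of the form. Concretely, I would remove a ball, so that $X_0 = X\setminus \mathrm{int}\,D^4$, and view $X_0$ as a topological handlebody whose $2$-handles are attached along a framed link realizing the form; this uses Freedman's theory to handle the non-smoothable situation. The isometry $A$ is then realized by a homotopy equivalence $X\to X$, built cell by cell. This is possible because, for simply-connected $4$-manifolds, homotopy self-equivalences inducing a given isometry exist by the classical work of Whitehead and Milnor.

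The key step is to upgrade this homotopy equivalence to a homeomorphism. Here I would use topological surgery, valid in dimension four for simply-connected (good) fundamental group by Freedman. The surgery exact sequence for $X$ has normal invariants $[X,G/TOP]\cong H^{4}(X;\Z)\oplus H^{2}(X;\Z/2)$. The homotopy equivalence realizing $A$ can be adjusted so that its normal invariant is trivial: the $H^2(X;\Z/2)$ part is controlled by $w_2$, which $A$ preserves, and the $H^4$ part by Kirby--Siebenmann and the signature. Since $L_5(1)=0$, the structure set then shows it is homotopic to a homeomorphism.

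\emph{Injectivity.} This is the hard part. Let $f$ be a homeomorphism acting trivially on $H^{2}$. First, by the result of Cochran--Habegger and Quinn that homotopy classes of self-equivalences are detected by the action on $H_2$ when $\pi_1=1$ (modulo a possible $\Z/2$-type indeterminacy, which I would need to check vanishes), $f$ is homotopic to the identity. Next I would promote the homotopy to a topological h-cobordism (pseudo-isotopy) $X\times I$ from $\id$ to $f$. Here again surgery applies: the obstruction lies in $L_6(1)$ together with normal-invariant data in $[\Sigma X, G/TOP]$, and both can be killed.

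The main obstacle is the final step, passing from pseudo-isotopy to isotopy in dimension four. Quinn's original argument here contained the gap mentioned in the footnote, later repaired by Gabai--Gay--Hartman--Krushkal--Powell. I would follow Quinn's strategy. Put the pseudo-isotopy $X\times I$ in a topological handle position with only $2$- and $3$-handles. Then use Freedman's disk embedding theorem, applicable because $\pi_1=1$, to find topologically embedded Whitney disks. These disks cancel the intersections between the $2$-handle belt spheres and the $3$-handle attaching spheres, after which the handles cancel and the pseudo-isotopy becomes a product, giving an isotopy. The delicate point is to do these Whitney moves in a one-parameter family without introducing new intersections. This is where I expect the real difficulty lies, and where I would rely on the corrected argument of \cite{gabai2023pseudoisotopies} rather than reconstruct it.
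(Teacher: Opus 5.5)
The survey gives no proof of this theorem; it only cites Quinn, Perron and the correction in \cite{gabai2023pseudoisotopies}. So your outline has to be judged on its own. Its architecture is reasonable, and your second injectivity step is in fact fine. The normal invariants for $\mathcal{S}(X\times I,\partial)$ lie in $[\Sigma X,G/\mathrm{TOP}]=0$, because $\Sigma X$ has cells only in dimensions $3$ and $5$ and $\pi_3(G/\mathrm{TOP})=\pi_5(G/\mathrm{TOP})=0$. Moreover, $L_6(1)$ fixes the base point, because $\pi_6(G/\mathrm{TOP})\to L_6(1)$ is onto via the top cell of $\Sigma^2(X_+)$. Hence any homotopy $\id\simeq f$ yields a pseudo-isotopy.

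\textbf{The gap in the first injectivity step.} It is false that homotopy self-equivalences of a simply-connected $4$-manifold are detected by their action on $H_2$ up to an indeterminacy that vanishes. Cochran--Habegger show that the kernel $K$ of $\mathcal{E}(X)\to\Aut(H_2(X;\Z))$ is $\Ker\bigl(w_2\colon H_2(X;\Z/2)\to\Z/2\bigr)$. This is already $(\Z/2)^2$ for $S^2\times S^2$. So from $f_*=\id$ alone you cannot conclude $f\simeq\id$; you must use that $f$ is a homeomorphism. The missing input is that the normal invariant $\mathcal{E}(X)\to[X,G/\mathrm{TOP}]$ is injective on $K$, while a homeomorphism has trivial normal invariant. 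Alternatively, skip the homotopy altogether and get the pseudo-isotopy directly from $f_*=\id$ by a bordism argument on the mapping torus $T_f$, in the style of Kreck.

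\textbf{The same group $K$ is what your surjectivity argument silently needs.} Here the two halves of your proposal pull against each other. If the indeterminacy vanished, the normal invariant of a self-equivalence $h$ realizing $A$ would be determined by $A$, and there would be nothing to ``adjust''. What actually happens is as follows.
\begin{itemize}
\item Since the surgery obstruction of $h$ vanishes, the $H^4$-component of $\eta(h)$ is forced by the signature, not by $ks$.
\item So $\eta(h)$ is determined by its component $a\in H^2(X;\Z/2)$.
\item The Kirby--Siebenmann invariant then constrains $a$: since $ks(M)-ks(X)=\langle a^2,[X]\rangle=\langle w_2\cup a,[X]\rangle$ for a homotopy equivalence $M\to X$, and here $M=X$, you get $a\in\Ker(w_2\cup -)$.
\item To finish, you need every such $a$ to be the normal invariant of some $k\in K$.
\item Then $[h]=[k]$ in $\mathcal{S}(X)$ because $L_5(1)=0$. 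So $h\simeq k\circ g$ with $g$ a homeomorphism, and $g_*=A$.
\end{itemize}
The phrase ``controlled by $w_2$, which $A$ preserves'' does not supply this step. You need either a proof that $\eta|_K$ is a bijection onto $\Ker(w_2\cup -)$, or a direct citation of Freedman's realization of isometries.

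\textbf{A minor point.} $X\setminus\mathrm{int}\,D^4$ is not a handlebody on a framed link when $X$ is non-smoothable; for the $E_8$-manifold this would make $X$ smooth. You only need the homotopy decomposition $X\simeq\bigvee S^2\cup e^4$, so this does not affect the argument.
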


\subsection{Failure of surjectivity of $\pi_{0}(\Diff)\to \pi_{0}(\Homeo)$}
\label{section: result by gauge theory, but not gauge theory for families}

We now turn to results comparing $\Diff(X)$ and $\Homeo(X)$.
Historically, the first examples concerned the failure of surjectivity of the natural map $\pi_{0}(\Diff(X)) \to \pi_{0}(\Homeo(X))$:

\begin{thm}[Friedman--Morgan \cite{FM88,FM882}]
\label{thm: Friedman--Morgan}
Let $n>9$ be a natural number, and let $X$ be an oriented smooth $4$-manifold homeomorphic to $\CP^{2}\# n\overline{\CP}^2$.
Then the natural map
\[
\pi_{0}(\Diff(X)) \to \pi_{0}(\Homeo(X))
\]
is not surjective.
\end{thm}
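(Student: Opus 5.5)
The plan is to combine Theorem~\ref{thm: QuinnPerron} with a gauge-theoretic constraint on how diffeomorphisms act on cohomology. By Theorem~\ref{thm: QuinnPerron}, $\pi_0(\Homeo^+(X))\cong \Aut(H^2(X;\Z))$, and the lattice here is $\langle 1\rangle\oplus n\langle -1\rangle$. So it suffices to find one automorphism $A$ of this lattice that is not induced by any diffeomorphism of $X$. (Since $\sigma(X)\neq 0$, orientation issues disappear.) The obstruction will be an invariant of the smooth structure that every diffeomorphism must preserve, but that $\Aut(H^2(X;\Z))$ as a whole does not preserve.

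For the invariant I would use the Seiberg--Witten (or, as in the original, Donaldson) invariants. Since $b^+(X)=1$, these are not defined absolutely; they depend on a chamber, i.e.\ on a choice of component of the positive cone $\{x : x^2>0\}$, together with a wall-crossing formula.

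\emph{Step 1.} Show that the invariant is nontrivial in at least one chamber. For $n\ge 10$, I expect Friedman--Morgan to work with the smooth structures for which the relevant invariant detects a distinguished class, namely the canonical class $K$ up to sign. In the chamber-independent regime, which I expect to be where the hypothesis $n>9$ enters, the wall-crossing terms should be controllable. The set of basic classes is then preserved by any diffeomorphism, so $f^*K=\pm K$ for every $f\in\Diff(X)$.

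\emph{Step 2.} Note that $\Aut(H^2)$ acts transitively enough on characteristic vectors of square $9-n$ that some $A$ sends $K$ to a class other than $\pm K$. A reflection in a suitable class of square $-1$ or $-2$ not orthogonal to $K$ should do. For example, take $E=e_1$ with $K\cdot e_1\neq 0$; the reflection $x\mapsto x+2(x\cdot e_1)e_1$ moves $K$ off $\pm K$ when $K=3h-\sum e_i$ and $n\ge 2$. By Theorem~\ref{thm: QuinnPerron}, $A$ is realized by a homeomorphism, but by Step~1 it is not realized by any diffeomorphism, so surjectivity fails.

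\textbf{Main obstacle.} The hard step is Step~1, and specifically the fact that a general $X$ homeomorphic to $\CP^2\# n\overline{\CP}^2$ need not be a rational surface or carry a complex structure at all. For arbitrary smooth $X$ the invariant could vanish, and then it imposes no constraint. I would instead work with the subgroup generated by positive-cone orientation data and wall-crossing. Even when the SW invariant vanishes in one chamber, the wall-crossing formula for $b_1=0$ forces it to be nonzero in the other chamber for characteristic classes $c$ with $c^2 > \sigma$. Since $n>9$ makes $c^2$ small, I would check that the invariant at a suitable class is nonzero in exactly one of the two chambers determined by the metric-independent "small perturbation" chamber structure. A diffeomorphism must then preserve a specific chamber/class datum. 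I would then choose the homeomorphism $A$ (e.g.\ a reflection in a class of square $-1$ or $+1$) to violate this constraint. Making this uniform for every smooth structure is where I expect the real work to lie.
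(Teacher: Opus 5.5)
Your overall frame is consistent with the paper, which gives no proof beyond attributing the result to Friedman--Morgan and noting that one studies how diffeomorphisms act on Donaldson or Seiberg--Witten invariants: realize a lattice automorphism by a homeomorphism via Theorem~\ref{thm: QuinnPerron}, then obstruct smooth realization with $b^{+}=1$ invariants. But Steps~1 and~2 are false as written, and the step that proves the theorem is left open.

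With $b^{+}=1$ there is no chamber-independent set of basic classes. Moreover, $n>9$ is the opposite of the chamber-independent regime. A characteristic class $c$ with $c^{2}=9-n$ (virtual dimension $0$) has $c^{2}<0$ exactly when $n\ge 10$, so $c^{\perp}$ cuts the positive cone into two chambers. Your Step~2 automorphism is in fact smoothly realized on the standard $\CP^{2}\#n\overline{\CP}^{2}$. Complex conjugation on one $\overline{\CP}^{2}$ summand can be isotoped to be the identity near a real fixed point, since its derivative there lies in $SO(4)$. It then induces exactly the reflection in $e_{1}$, which sends $K$ to $K\pm 2e_{1}\neq\pm K$. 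So ``$f^{*}K=\pm K$ for all $f\in\Diff(X)$'' fails. Since the theorem must hold for the standard smooth structure too, this choice of automorphism cannot work.

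Your last paragraph points at the right tool, but you neither state the datum diffeomorphisms must preserve nor choose an automorphism violating it. Also, ``nonzero in exactly one chamber'' is not something you can verify for an arbitrary smooth structure. What closes the gap is a parity argument that needs no computation of invariants.

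Fix any smooth $X$ in the homeomorphism class and a characteristic $c$ with $c^{2}=9-n$, e.g.\ $c=-3h+\sum_i e_i$. Let $SW_{+}(c)$ and $SW_{-}(c)$ be the invariants in the two chambers, distinguished by the sign of $(2\pi c+[\eta])\cdot\omega_g$. Wall-crossing ($b_1=0$, $d=0$) gives $SW_{+}(c)-SW_{-}(c)=\pm1$. If a diffeomorphism $f$ reverses the forward component of the positive cone, then $\omega_{f^{*}g}=-f^{*}\omega_{g}$. Naturality then gives $SW_{+}(f^{*}c)=\epsilon\,SW_{-}(c)$ with $\epsilon=\pm1$. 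Hence no diffeomorphism can fix $c$ while reversing the forward cone: that would force $SW_{+}(c)=\pm SW_{-}(c)$, an even difference.

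Such a lattice automorphism exists precisely because $c^{2}<0$. Take $v=6h-2(e_1+\cdots+e_8)-e_9-e_{10}$. Then $v^{2}=2$ and $v\cdot c=0$, so $R_v(x)=x-(x\cdot v)v$ is an integral isometry fixing $c$ and sending $v\mapsto -v$. Reflections in square $\pm1$ classes, as you suggest, cannot do this: $c^{\perp}$ is even, and reflections in negative classes preserve the forward cone. By Theorem~\ref{thm: QuinnPerron}, $R_v$ is induced by a homeomorphism, but by the above it is induced by no diffeomorphism. The argument is uniform in the smooth structure, which resolves exactly the obstacle you flagged.
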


\begin{thm}[Donaldson~\cite{Do90}, Morgan--Szab{\'o}~\cite{Do90}]
\label{theo: Donaldson, Morgan-Szabo}
Let $X$ be a smooth $4$-manifold homotopy equivalent to a $K3$ surface.
Then the natural map
\[
\pi_{0}(\Diff(X)) \to \pi_{0}(\Homeo(X))
\]
is not surjective.
\end{thm}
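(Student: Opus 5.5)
We argue by comparing the images of the two mapping class groups in $\Aut(H^{2}(X;\Z))$. By Theorem~\ref{thm: QuinnPerron}, $X$ is simply-connected (being homotopy equivalent to $K3$), so $\pi_{0}(\Homeo^{+}(X)) \to \Aut(H^{2}(X;\Z))$ is an isomorphism. Since $\sigma(X)=-16\neq 0$, every homeomorphism preserves orientation, so $\Homeo(X)=\Homeo^{+}(X)$ and $\Diff(X)=\Diff^{+}(X)$. It therefore suffices to produce an automorphism $A$ of the intersection form $2(-E_8)\oplus 3H$ that is not induced by any diffeomorphism of $X$. Quinn--Perron then realizes $A$ by a homeomorphism, and that homeomorphism is not isotopic to any diffeomorphism.

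\textbf{The obstruction.} Fix a maximal positive definite subspace $H^{+}\subset H^{2}(X;\R)$. It is $3$-dimensional, and the space of such subspaces is contractible. So any isometry $A$ has a well-defined sign $\epsilon(A)=\pm1$, according to whether the composite $H^{+}\to A(H^{+})\to H^{+}$ (the second map being orthogonal projection) preserves orientation. The key claim is that every diffeomorphism $f$ of $X$ satisfies $\epsilon(f^{*})=+1$. This is Donaldson's theorem, and I would prove it with a nonvanishing invariant sensitive to homology orientation.

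\emph{Step 1.} Show that $X$ has a nonzero gauge-theoretic invariant. For $K3$ itself the Seiberg--Witten invariant of the canonical $\spc$ structure (the one with $c_1=0$) is $\pm1$, since $K3$ is Kähler with $b^{+}=3>1$. For a general smooth $X$ merely homotopy equivalent to $K3$, a nonvanishing invariant is not automatic. Here I would use Donaldson's original argument with $SU(2)$ polynomial invariants, or the Morgan--Szabó argument that some Seiberg--Witten basic class exists. The latter uses the $10/8$-type constraint and the fact that the intersection form is even with $\sigma=-16$: one shows the invariant of the spin structure is odd via a mod $2$ computation, namely Morgan--Szabó's result that $SW(\fraks)\equiv 1 \pmod 2$ for homotopy $K3$s.

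\emph{Step 2.} Recall that the invariant $SW(X,\fraks)$ depends on a homology orientation, that is, an orientation of $H^{+}$ when $b_1=0$. Under a diffeomorphism $f$ we have
\[
SW(X,f^{*}\fraks,f^{*}o)=SW(X,\fraks,o).
\]
Reversing $o$ changes the sign of $SW$.

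\emph{Step 3.} The spin structure is unique because $H^{1}(X;\Z/2)=0$, so $f^{*}\fraks=\fraks$ for $c_1=0$. If $\epsilon(f^{*})=-1$, then $f^{*}o=-o$, and Steps~2 and~3 give $SW=-SW$. Hence $SW(\fraks)=0$, contradicting Step~1, which says $SW(\fraks)$ is odd.

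\textbf{Conclusion.} It remains to exhibit an isometry $A$ with $\epsilon(A)=-1$. Take $A=-\id$ on one hyperbolic summand $H$ (a $-1$ reflection) and the identity on the rest. More precisely, choose the reflection in a vector $v$ with $v^{2}=2$ in $H$. This reverses the orientation of a positive $3$-plane containing $v$, so $\epsilon(A)=-1$. By Quinn--Perron, $A$ is realized by a homeomorphism, which by the obstruction is not homotopic, let alone isotopic, to any diffeomorphism.

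The main obstacle is Step~1 for an arbitrary homotopy $K3$. That is exactly where the depth of Donaldson's and Morgan--Szabó's work lies; the rest of the argument is formal.
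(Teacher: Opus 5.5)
Your proposal is correct and follows the classical route the paper indicates. Diffeomorphisms act on the Seiberg--Witten invariant of the unique spin structure, Morgan--Szab\'o's oddness result forces every diffeomorphism to preserve the orientation of $H^{+}$, and Quinn--Perron realizes by a homeomorphism an isometry that reverses that orientation. (The paper also notes that the case $m=3$ of Theorem~\ref{main theorem} recovers the statement via families gauge theory.)

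One small slip: $-\id$ on a hyperbolic summand and the reflection in $v=e+f$ are different isometries (the reflection sends $e\mapsto -f$, $f\mapsto -e$). Both have $\epsilon=-1$, so either choice works.
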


In Theorem~\ref{theo: Donaldson, Morgan-Szabo}, the case $X=K3$ is due to Donaldson \cite{Do90}; the case of a general homotopy $K3$ follows from the result of Morgan--Szab{\'o} \cite{Do90}.

The proofs of Theorems~\ref{thm: Friedman--Morgan} and \ref{theo: Donaldson, Morgan-Szabo} use ordinary (non-family) gauge theory:
one studies the action of diffeomorphisms on Donaldson or Seiberg--Witten invariants.
Along the same lines, results analogous to Theorem~\ref{theo: Donaldson, Morgan-Szabo} have also been established for many K{\"a}hler surfaces; see, for example, \cite{EbelingOkonekdiffeomorphism91,Lonnediffeomorphism98}.

Recently, however, some of these results admit alternative proofs using family-gauge-theoretic constraints due to Baraglia \cite{Ba21}.
Those alternative arguments have broad scope, allowing extensions to certain non-simply-connected closed $4$-manifolds (Nakamura--the author \cite{KN20}) and to $4$-manifolds with boundary (Taniguchi--the author \cite{KT20}).
At present, though, they apply only to $4$-manifolds with relatively small $b^{+}$ (currently $b^{+}\le 3$).

\subsection{Failure of injectivity of $\pi_{0}(\Diff)\to \pi_{0}(\Homeo)$}
\label{section: Ruberman BK}

The first application of gauge theory for families to topology is due to Ruberman \cite{Rub98}, whose work serves as a model for many of the results discussed in this article.

\begin{thm}[Ruberman~\cite{Rub98}]
\label{thm: Ruberman's first thm}
There exist closed $4$-manifolds admitting self-diffeomorphisms that are topologically isotopic to the identity but not smoothly isotopic to the identity.
Equivalently,
\begin{equation}
\label{eq: ker pi0}
\Ker\bigl(\pi_{0}(\Diff(X)) \to \pi_{0}(\Homeo(X))\bigr)
\end{equation}
is nontrivial for some closed $4$-manifolds $X$.
More concretely, $X=(2n)\CP^{2}\# m\overline{\CP}^2$ has this property for $n\ge 2$, $m\ge 10n+1$.
\end{thm}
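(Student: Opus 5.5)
Ruberman's original argument uses a families Donaldson (or, in modern language, families Seiberg--Witten) invariant attached to a one-parameter family of diffeomorphisms. The plan is to build a diffeomorphism $f$ of $X=(2n)\CP^{2}\# m\overline{\CP}^2$ that acts trivially on $H^{2}(X;\Z)$, and then to separate it from the identity by a $\Z$-valued invariant of isotopy classes of diffeomorphisms preserving a spin$^c$ structure.

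\emph{Construction of $f$.} Write $X = M \# N$, where $M=\CP^2\# (m')\overline{\CP}^2$ is blown up so that $M$ is homeomorphic to, but not diffeomorphic to, a manifold $M'$; for instance, take $M'$ a Dolgachev-type surface, or blow-ups of elliptic surfaces, and use the range $m\ge 10n+1$. Choose a reflection $\rho_M$ of $M$ in a class $e$ of square $-1$ or $-2$, realized by a diffeomorphism (complex conjugation-type reflection on an $\overline{\CP}^2$ summand), and similarly $\rho_N$ on $N$. Set $f=\rho_M\#\rho_N^{-1}$ composed with a diffeomorphism carrying the pair to the same action, arranged so that $f^*=\mathrm{id}$ on $H^2$. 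Wall's theorem (Theorem~\ref{thm: Wall}) and the decomposition after stabilization let us write $f$ as a composition of two diffeomorphisms with equal homological action, one supported on each side of the neck. By Theorem~\ref{thm: QuinnPerron}, $f$ is then topologically isotopic to the identity, so it lies in the kernel~\eqref{eq: ker pi0}.

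\emph{The invariant.} For a diffeomorphism $f$ preserving a spin$^c$ structure $\fraks$ (or an $SO(3)$-bundle) with $f^*$ trivial on $H^+$, and $\dim$ of the moduli space equal to $-1$, count solutions over the mapping-torus family: take a path of generic metrics $g_t$ from $g$ to $f^*g$ and count parametrized solutions. The result $\Phi(f)\in\Z$ is an invariant of smooth isotopy class, because it is a wall-crossing-free count once $b^+\ge 3$, which is guaranteed by $2n\ge 4$. We also have $\Phi(\id)=0$, since the constant path has no solutions when the expected dimension is negative.

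\emph{Computation.} I would compute $\Phi(f)$ by a gluing argument along the neck of $M\# N$. The path of metrics from $g$ to $f^*g$ must cross the reducible wall on the $M$ side (where $b^+(M)=1$) once. The wall-crossing formula then gives a count equal to the ordinary (non-family) invariant of $N$ in the complementary structure, up to sign. Choosing $N$ with $b^+(N)\ge 3$ odd and nonzero Donaldson/SW invariant forces the twisted-sum structure, which is why we take $2n$ copies of $\CP^2$, with one $\CP^2$ in $M$ and $2n-1$ in $N$. Nonvanishing for $N$ is the main obstacle, since $N$, being a connected sum of $\CP^2$'s, has vanishing invariants. The fix I would try first is to use the exotic pair: replace the splitting by $X\cong M'\#(\text{stuff})$ using the fact that $M'\# \CP^2$ dissolves. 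The diffeomorphism is then built from reflections on a piece carrying a nontrivial invariant (an elliptic surface of $b^+\ge 3$), after which the connected-sum dissolving theorem identifies the total space with $X$. The delicate step is controlling the gluing/wall-crossing so that exactly one wall is crossed with multiplicity $\pm$ the nonzero invariant. That gives $\Phi(f)\neq 0=\Phi(\id)$, so $f$ is not smoothly isotopic to the identity.
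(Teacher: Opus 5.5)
\textbf{There is a genuine gap: the construction of $f$ and the computation of $\Phi(f)$ do not fit together.} Your ingredients are the right ones: a $\Z$-valued count of the parametrized moduli space over the mapping torus in formal dimension $-1$ (well defined since $b^{+}(X)=2n\ge 4$), wall-crossing on a $b^{+}=1$ summand glued to a summand with nonzero invariant, dissolving of $E(n)\#\CP^{2}$, and Quinn--Perron for the topological isotopy. But the heart of the theorem is missing, and two steps fail as written.

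First, two diffeomorphisms supported on opposite sides of the neck of $M\# N$ act on $H^{2}(M)\oplus H^{2}(N)$ as $A\oplus\id$ and $\id\oplus B$. They can have equal homological action only if both act trivially.

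Second, and more seriously, suppose $f=\id_{Z}\#\rho$ for a fixed splitting $Z\# M$ with $b^{+}(M)=1$, and $f^{*}=\id$. Then $\rho^{*}=\id$ on $H^{2}(M)$, so $g_M$ and $\rho^{*}g_M$ have the same period point $\omega$. The signed number of crossings of every wall $W_c$ is then $\tfrac12\bigl(\operatorname{sign}(\omega\cdot c)-\operatorname{sign}(\omega\cdot c)\bigr)=0$, and the gluing/wall-crossing formula gives $\Phi(f)=0$. So the claim that the path from $g$ to $f^{*}g$ ``must cross the reducible wall once'' cannot hold for a homologically trivial $f$. Applying the wall-crossing computation directly to such an $f$ will never detect it.

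\textbf{The missing idea is to realize the same non-trivial isometry of $H^{2}(X)$ twice and subtract.} This is what the survey's description of Ruberman's argument points to: the dissolving diffeomorphism combined with a hand-made diffeomorphism of $\CP^{2}\#2\overline{\CP}^2$.

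\begin{enumerate}
\item Write $X\cong Z\# M$ with $Z=E(n)\#k\overline{\CP}^2$ and $M=\CP^{2}\#2\overline{\CP}^2$. Since $b^{-}(E(n))=10n-1$, this gives $m=10n+1+k$, which is where $m\ge 10n+1$ comes from.
\item Choose $\rho$ on $M$ so that its action moves the period point across a wall with nonzero local contribution. In $SO(3)$ theory a reflection in a wall class $c$ does this and preserves $w_{2}\equiv c$. Then $f_{1}=\id_{Z}\#\rho$ has $\Phi(f_{1})=\pm D_{Z}\cdot(\text{local number})\neq 0$ by wall-crossing and gluing.
\item Using $X\cong 2n\CP^{2}\#m\overline{\CP}^2$ and Wall's theorem, realize $(f_{1})_{*}$ by a diffeomorphism $f_{2}$ adapted to a \emph{standard} splitting, where every piece has vanishing invariants, so $\Phi(f_{2})=0$. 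For example, take $f_{2}=\phi^{-1}(\id_{N}\#\rho)\phi$ with $\phi\colon Z\# M\cong N\# M$ and $N$ a connected sum of $\CP^{2}$'s and $\overline{\CP}^2$'s, when $\phi_{*}$ can be chosen to intertwine the two actions.
\item Set $f=f_{1}\circ f_{2}^{-1}$. It is homologically trivial, hence topologically isotopic to $\id$. By additivity and conjugation-invariance of $\Phi$ (which you must also establish), $\Phi(f)=\Phi(f_{1})-\Phi(f_{2})\neq 0$.
\end{enumerate}

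So the vanishing of invariants for connected sums of $\CP^{2}$'s, which you call ``the main obstacle'', is exactly what kills the second term. A few further corrections:
\begin{itemize}
\item The Dolgachev-type $b^{+}=1$ exotic pieces play no role.
\item The wall-crossing diffeomorphism must act on the $b^{+}=1$ summand, not on the piece carrying the nonzero invariant.
\item If you use Seiberg--Witten rather than $SO(3)$ theory, a diffeomorphism of a $b^{+}=1$ manifold that preserves $\fraks$ and the orientation of $H^{+}$ keeps $(g,\eta)$ and $(\rho^{*}g,\rho^{*}\eta)$ in the same $\fraks$-chamber. You would then need $\rho^{*}\fraks=\bar{\fraks}$ and a conjugation-symmetric count.
\end{itemize}
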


Self-diffeomorphisms as in Theorem~\ref{thm: Ruberman's first thm}--topologically but not smoothly isotopic to the identity--are nowadays often called {\it exotic diffeomorphisms}.
Ruberman's construction of nontrivial elements in \eqref{eq: ker pi0} is ingenious and exploits the phenomenon known as {\it dissolving} of $4$-manifolds.
Very roughly, one uses the nontrivial diffeomorphism $K3\#\CP^{2}\cong 4\CP^{2}\#19\overline{\CP}^2$, typically coming from Kirby calculus, and combines it with a hand-made self-diffeomorphism on $\CP^{2}\#2\overline{\CP}^2$ (which is related to wall-crossing in Yang--Mills gauge theory).
Our construction in Theorem~\ref{thm:BK's first thm} below follows a similar idea.

Ruberman's Theorem~\ref{thm: Ruberman's first thm} is proved by considering an $S^{1}$-family of $SO(3)$ anti-self-dual Yang--Mills equations (see Section~\ref{section: how to consider families gauge theory}).
By running an analogous argument for the Seiberg--Witten equations, one can widen the scope--thanks to the cleaner structure of the wall in the Seiberg--Witten side--and obtain:

\begin{thm}[Baraglia--K.~\cite{BK20}]
\label{thm:BK's first thm}
For $X=n(K3 \# S^{2}\times S^{2})$ (with $n\ge 2$),
\[
\Ker\bigl(\pi_{0}(\Diff(X)) \to \pi_{0}(\Homeo(X))\bigr)
\]
is nontrivial.
\end{thm}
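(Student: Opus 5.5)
We will construct an explicit self-diffeomorphism $f$ of $X=n(K3\# S^{2}\times S^{2})$ that acts trivially on $H^{2}(X;\Z)$, and then show that $f$ is not smoothly isotopic to the identity. Since $X$ is simply-connected, Theorem~\ref{thm: QuinnPerron} shows that any such $f$ is topologically isotopic to the identity. So the whole problem is to build a homologically trivial $f$ and prove it is smoothly nontrivial using a families Seiberg--Witten invariant over $S^{1}$.

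\emph{Construction.} Write $X = K3\#K3\#\big((n-2)K3\# n S^{2}\times S^{2}\big)$, and regroup so that $X\cong M_{1}\# M_{2}$ with $M_{1}=K3\# S^{2}\times S^{2}$ and $M_{2}=(n-1)K3\#(n-1)S^{2}\times S^{2}$. Consider the automorphism of $H^{2}(M_{1})$ that acts as $-1$ on the $H^{+}$-part of one hyperbolic summand. Concretely, take $\rho$ to be reflection in a $(-2)$-class or a $(+2)$-class, chosen so that $\rho$ reverses the orientation of $H^{+}(M_{1})$. By Theorem~\ref{thm: Wall} this is realized by a diffeomorphism $f_{1}$ of $M_{1}$. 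Choose $f_{2}$ on $M_{2}$ in the same way, also reversing the orientation of $H^{+}$. Next, use Wall's theorem on $X$, which is indefinite and of the form $M\# S^{2}\times S^{2}$, to find a diffeomorphism $g$ of $X$ realizing the inverse of the homological action of $f_{1}\# f_{2}$ while being supported away from the summand where $K3$ sits. We want $g$ to act on $H^{2}$ differently from $f_{1}\#f_{2}$ regarding $\mathrm{spin}^c$ structures and orientation of $H^{+}$. Set $f=(f_{1}\#f_{2})\circ g$, which acts trivially on $H^{2}$.

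The cleaner variant, following Ruberman, is this. Take $f=(f_{1}\#\id)\circ(\id\# f_{2})$ and let $f'$ be a diffeomorphism with the same homology action that is obtained by a different decomposition of $X$. This second decomposition uses the dissolving diffeomorphism $K3\#S^{2}\times S^{2}\cong 3\CP^{2}\#20\overline{\CP}^2$-type splittings, so that it is a connected sum with a summand having $b^{+}=0$ or a positive scalar curvature piece. Then $f\circ f'^{-1}$ is the candidate exotic diffeomorphism.

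\emph{Obstruction.} For a diffeomorphism $h$ that preserves a $\mathrm{spin}^c$ structure $\fraks$ and the orientation of $H^{+}$, we have a well-defined mapping torus family $X\to E_{h}\to S^{1}$. On this family define the families Seiberg--Witten invariant $FSW(h,\fraks)\in\Z/2$ by counting parametrized monopoles for $\fraks$ with $d(\fraks)=-1$, taken with $b^{+}(X)\ge 3$ so that there is no wall-crossing. This is additive under composition and invariant under smooth isotopy, so it vanishes on $\id$. We then need two computations.

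First, use a gluing formula. If $h=h_{1}\# \id$ on $M_{1}\#M_{2}$ with $h_{1}$ reversing orientation on $H^{+}(M_{1})$, then $FSW(h)$ equals the ordinary Seiberg--Witten invariant of $M_{2}$ times a local wall-crossing contribution from $M_{1}$. Here the relevant $\mathrm{spin}^c$ structure restricts to the $K3$ $\mathrm{spin}$ structure, whose invariant is $\pm1$, and we need $d=-1$ after adding one $S^{2}\times S^{2}$. Second, the diffeomorphism built from the dissolved decomposition contributes zero, because that decomposition contains a summand with positive scalar curvature, or because the relevant $\mathrm{spin}^c$ structures have vanishing ordinary invariant. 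Putting these together gives $FSW(f\circ f'^{-1})\neq 0$, so $f\circ f'^{-1}$ is not smoothly isotopic to the identity, while it is topologically isotopic to the identity.

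\emph{Main obstacle.} The hardest step is the families gluing formula for a connected sum. We must show that the families invariant of $h_{1}\#\id$ equals the ordinary invariant of one summand times the mod-$2$ count coming from the reversal of $H^{+}$ on the other, where that count is a wall-crossing across the reducible locus over $S^{1}$. I would first attempt this via finite-dimensional approximation with a families Bauer--Furuta stable cohomotopy gluing, and fall back on a direct neck-stretching argument. A secondary difficulty is bookkeeping: we must choose $\fraks$ preserved by every map involved and with $d(\fraks)=-1$. The requirement $n\ge2$ enters exactly here, since we need two summands each carrying a $K3$ so that both $f_{1}$ and the nonvanishing invariant live on separate pieces.
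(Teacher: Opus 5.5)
There is a genuine gap: as set up, your argument has no source of a nonzero families invariant. The wall-crossing/gluing mechanism behind Ruberman's argument, which is also what the paper points to, works as follows. For a family $M\#E_{N}\to S^{1}$ that is trivial on $M$, the mod~$2$ invariant equals $SW(M,\fraks_{M})$ times the number of times the family on $N$ crosses the reducible wall, namely $\langle w_{b^{+}(N)}(H^{+}(E_{N})),[S^{1}]\rangle$. This pairing vanishes unless $b^{+}(N)=\dim S^{1}=1$, and the product is useless unless $SW(M,\fraks_{M})$ is odd.

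Your choices violate both requirements:
\begin{itemize}
\item You place the monodromy on $M_{1}=K3\#S^{2}\times S^{2}$, which has $b^{+}=4$. A loop of parameters on it generically never meets the reducible locus, so its ``wall-crossing contribution'' is $0$.
\item The complementary summand $M_{2}=(n-1)K3\#(n-1)(S^{2}\times S^{2})$ has vanishing Seiberg--Witten invariants. Indeed, so does any summand containing a $K3$ together with another piece of positive $b^{+}$. So the ``$\pm1$ of the $K3$'' does not survive.
\end{itemize}
Hence $FSW(f_{1}\#\id)=FSW(\id\#f_{2})=0$, and so $FSW(f)=0$. The ``cleaner variant'' fails as well. $K3\#S^{2}\times S^{2}$ is spin, so it is not diffeomorphic to any $a\CP^{2}\#b\overline{\CP}^{2}$; moreover $3\CP^{2}\#20\overline{\CP}^{2}$ has the wrong $b^{+}$. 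For the same reasons, your explanation of why $n\ge2$ is needed is not the right one.

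The missing ingredient is a second, genuinely different smooth decomposition $X\cong M\#(S^{2}\times S^{2})$ in which $M$ has an \emph{odd} Seiberg--Witten invariant. This is the spin analogue of the Kirby-calculus diffeomorphism $K3\#\CP^{2}\cong4\CP^{2}\#19\overline{\CP}^{2}$ that the paper alludes to. Gompf's diffeomorphism $E(2n)\#S^{2}\times S^{2}\cong n(K3\#S^{2}\times S^{2})$ supplies it.

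Here is how the argument then goes.
\begin{itemize}
\item Let $\rho$ be the hand-made diffeomorphism of $S^{2}\times S^{2}$ given by a reflection in each factor. It acts by $-1$ on $H^{2}$ and reverses the $1$-dimensional $H^{+}$.
\item Put $\fraks=\fraks_{K}\#\fraks_{0}$, where $\fraks_{K}$ is the canonical $\spc$ structure of $E(2n)$ ($SW=\pm1$, $d=0$) and $\fraks_{0}$ is the spin structure of $S^{2}\times S^{2}$. Then $d(\fraks)=-1$. The spin structure of $E(2n)$ would not do mod~$2$, since its invariant $\pm\binom{2n-2}{n-1}$ is even.
\item Gluing gives $FSW(\id_{E(2n)}\#\rho,\fraks)\equiv1$.
\item Now write $X=M'\#S^{2}\times S^{2}$ with $M'=nK3\#(n-1)(S^{2}\times S^{2})$. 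Use Theorem~\ref{thm: Wall} to conjugate $\id_{M'}\#\rho$ by a diffeomorphism carrying its hyperbolic summand onto the one from the first decomposition. The result $\psi$ acts on $H^{2}$ exactly as $\id_{E(2n)}\#\rho$ does, so it also preserves $\fraks$.
\item Gluing gives $FSW(\psi,\fraks)=0$, because $M'$ has vanishing Seiberg--Witten invariants. This is precisely where $n\ge2$ is used: for $n=1$, $M'=K3$ and there is no contrast.
\item Then $(\id_{E(2n)}\#\rho)\circ\psi^{-1}$ acts trivially on $H^{2}$, hence is topologically isotopic to the identity by Theorem~\ref{thm: QuinnPerron}. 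Mod~$2$ additivity, valid since $b^{+}(X)=4n\ge3$, shows its families invariant is $1\ne0$.
\end{itemize}
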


Although it is out of chronological order with Section~\ref{subsection: higher homotopy groups}, we mention here an intriguing result about Dehn twists on $4$-manifolds, based on the families Bauer--Furuta invariant (Section~\ref{subsection basic picture}).
Suppose an annulus $S^{3}\times[0,1]$ is embedded in $X$.
The Dehn twist along this annulus is the self-diffeomorphism of $X$ obtained by extending by the identity outside the annulus the map
\[
S^{3}\times[0,1]\to S^{3}\times[0,1]\ ;\ (x,t)\mapsto (\gamma(t)\cdot x,t),
\]
where $\gamma\colon[0,1]\to SO(4)$ is a loop representing the nontrivial element of $\pi_{1}(SO(4))\cong \Z/2$.

\begin{thm}[Kronheimer--Mrowka \cite{KM20}]
\label{thm: KM Dehn}
For $X=K3 \# K3$, the Dehn twist along the connected-sum neck defines a nontrivial element of
\[
\Ker\bigl(\pi_{0}(\Diff(X)) \to \pi_{0}(\Homeo(X))\bigr).
\]
\end{thm}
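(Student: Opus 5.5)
We have to show two things: that the Dehn twist $\delta$ along the neck is topologically isotopic to the identity, and that it is not smoothly isotopic to the identity.

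\emph{Topological side.} Write $X=X_1\# X_2$ with $X_1=X_2=K3$. The Dehn twist acts trivially on $H^{2}(X;\Z)$. Indeed, it is supported in a collar $S^{3}\times[0,1]$, and this collar is disjoint from representatives of all second homology classes. Since $X$ is simply connected, Theorem~\ref{thm: QuinnPerron} then shows that $\delta$ is topologically isotopic to the identity. (One could alternatively use the fact that the neck twist becomes trivial in $\Homeo$ whenever a summand is non-spin or topologically split. Quinn--Perron already suffices here.)

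\emph{Smooth side.} Suppose $\delta$ were smoothly isotopic to the identity. We detect it with a families Bauer--Furuta invariant of the mapping torus $X_\delta\to S^{1}$. The key point is that the mapping torus of $\delta$ is the fiberwise connected sum of the trivial family $X_1\times S^{1}$ with the trivial family $X_2\times S^{1}$, glued using the nontrivial loop $\gamma\in\pi_1(SO(4))$ at the neck. Equivalently, it is the twisted connected sum along the frame bundle. We work with the spin structure $\fraks$ and use $Pin(2)$-equivariant stable homotopy. A connected-sum (gluing) formula for Bauer--Furuta invariants in families will express the families invariant of $X_\delta$ as
\[
BF(X_1)\wedge BF(X_2)
\]
twisted by the action of $\gamma$ on the spinor and self-dual form spaces of the neck. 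Concretely, the loop in $SO(4)$ lifts to $Spin(4)=SU(2)\times SU(2)$ only as a path ending at $-1$. The induced clutching therefore acts on $S^{3}$-neck data through the $-1\in Pin(2)$ element, i.e. by the generator of $\pi_1$ of the relevant symmetry group. The twisted invariant then differs from the untwisted product by multiplication with an element $\eta$ of equivariant stable $\pi_1$.

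Two ingredients are known. First, for $K3$ the Bauer--Furuta invariant $BF(K3)$ is the generator $\eta\in\pi^{st}_1$. More precisely, it is a nontrivial $Pin(2)$-equivariant map between spheres, $\mathbb{H}\to\R^{3}$. Second, Bauer's computation shows that $BF(K3\#K3)=\eta^{2}\neq 0$. The families invariant over $S^{1}$ of the twisted family, viewed as an element of $\pi^{st}_{1}$-shifted groups, then becomes $\eta^{3}$, or its equivariant analogue. We must show this is nonzero equivariantly, whereas the product family gives $0$. If $\delta$ were isotopic to the identity, $X_\delta$ would be smoothly trivial and its families invariant would equal that of $X\times S^{1}$, which vanishes (it is a suspension of $BF(X)$ with trivial $S^{1}$-direction). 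This contradiction finishes the proof.

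\emph{Main obstacle.} The hard step is the gluing formula for the families invariant and the identification of the twist with multiplication by $\eta$ in the $Pin(2)$-equivariant (or $S^{1}$-equivariant, Real) stable homotopy. This needs careful finite-dimensional approximation over the base, together with a check that $\eta^{3}$, or the appropriate equivariant class, survives. Nonequivariantly $\eta^{3}=4\nu\neq0$ in $\pi^{st}_3$, so I would first try to reduce the equivariant statement to this nonequivariant one, for instance by restricting to the fixed points of $\Z/4\subset Pin(2)$ or by using a $\Z/2$-equivariant (Real) version.
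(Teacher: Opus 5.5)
Your topological half is fine: $\delta$ acts trivially on $H^2(X;\Z)$, so Theorem~\ref{thm: QuinnPerron} applies. The parenthetical alternative is unsupported, but you don't need it.

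On the smooth side your plan has the same shape as the argument the paper attributes to Kronheimer--Mrowka: compute the families Bauer--Furuta invariant of the mapping torus, for which the nonequivariant class suffices. However, the decisive step is missing, and the mechanism you describe actually points the wrong way. The spin lift $\tilde\delta$ is $+1$ on spinors over $X_1^\circ$ and $-1$ over $X_2^\circ$. So, after a families gluing, the invariant is the mapping torus of $BF(X_1)\wedge BF(X_2)$ under $1\wedge(-1)$. But $-1$ is a constant gauge transformation and $BF(X_2)$ is $S^1$-equivariant. Conjugating by the path $e^{i\pi t}$ therefore identifies this with the constant family: in that trivialization the invariant is literally that of $X\times S^1$.

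Over $S^1$, a complex trivialization of $\ind\slashed{D}_E$ is only defined up to $\pi_1(U)\cong\Z$. Changing it by a generator changes the top-cell component in $\{S^1_+\wedge S^{8},S^{6}\}\cong\pi^{st}_2\oplus\pi^{st}_3$ by $\eta\cdot\eta^2=\eta^3$, which is precisely the class you want to detect. So you must say which trivializations of $\ind\slashed{D}_E$ and $H^+(E)$ you use, and why any isomorphism of spin families (such as one coming from an isotopy $\delta\simeq\id$) preserves them. Until then, the comparison with $X\times S^1$ has no content. This is the issue flagged in Section~\ref{subsection basic picture}.

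The missing idea is the quaternionic structure.
\begin{itemize}
\item For a spin family, $\ind\slashed{D}_E$ is quaternionic and $\pi_1(\mathrm{Sp})=0$. Hence it has a trivialization that is canonical up to homotopy and respected by isomorphisms of spin families.
\item $H^+(E)$ is canonically trivial, because $\delta$ acts trivially on $H^2$ and $\Gr^+$ is contractible.
\item The monodromy $-1$ is joined to $1$ through quaternionic maps, namely via the $Spin(4)$-path $\tilde\gamma$, which commutes with $\Pin(2)$.
\item The gauge path $e^{i\pi t}$ is not quaternionic, since $j e^{i\theta}=e^{-i\theta}j$.
\end{itemize}
The two trivializations differ by a loop in $U$. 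Its winding on the domain minus that on the target of the $X_2$-part is $\ind_{\mathbb{H}}\slashed{D}_{K3}=-\sigma(K3)/16=1$. Its $J$-image is $\eta$, so in the canonical trivialization the top-cell component is $\eta\cdot\eta^2=\eta^3\neq 0$, versus $0$ for $X\times S^1$. The choice of lift $\pm\tilde\delta$ is irrelevant, since $\ind_{\mathbb{H}}\slashed{D}_{K3\#K3}=2$ is even.

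This parity computation is the real content. It must be combined with a families version of Bauer's gluing theorem that realizes $\tilde\delta$ as $1\wedge(-1)$. There is no equivariant statement to reduce: once the trivialization is fixed, just forget the group action. Your suggested $\Z/4=\langle j\rangle$ fixed points would discard every nontrivial summand and see nothing.

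Minor corrections:
\begin{itemize}
\item $\eta^3=12\nu$, not $4\nu$, in $\pi^{st}_3\cong\Z/24$.
\item The invariant of $X\times S^1$ does not vanish; only its top-cell component does.
\end{itemize}
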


The proof of Theorem~\ref{thm: KM Dehn} computes the Bauer--Furuta invariant of the mapping torus family determined by the Dehn twist and shows that it is nontrivial.
No analogous argument works using families Seiberg--Witten invariants (mirroring the phenomenon that while Seiberg--Witten invariants vanish on $K3\# K3$, the Bauer--Furuta invariant does not \cite{B04}).

Developing the ideas of Theorem~\ref{thm: KM Dehn} further, J.~Lin proved:

\begin{thm}[J.~Lin \cite{JL20}]
\label{thm: Lin Dehn}
For $X=K3 \# K3 \# S^{2}\times S^{2}$, the Dehn twist along the neck between the two $K3$'s defines a nontrivial element of
\[
\Ker\bigl(\pi_{0}(\Diff(X)) \to \pi_{0}(\Homeo(X))\bigr).
\]
\end{thm}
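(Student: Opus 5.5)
The plan has two parts. First, show that the Dehn twist $t$ along the neck between the two $K3$ summands is topologically isotopic to the identity. Second, show it is not smoothly isotopic to the identity, using a Pin$(2)$-equivariant families Bauer--Furuta invariant of the mapping torus.

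For the topological part, $t$ acts trivially on $H^{2}(X;\Z)$, since it is supported in $S^{3}\times[0,1]$. Theorem~\ref{thm: QuinnPerron} then gives that $t$ is trivial in $\pi_{0}(\Homeo^{+}(X))$.

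For the smooth part, suppose $t$ were smoothly isotopic to the identity. Then the mapping torus $E_t\to S^{1}$ would be smoothly isomorphic to the product family, so every families gauge-theoretic invariant of $E_t$ would agree with that of the product. I will work with the spin structure on $X$ and the families Bauer--Furuta invariant, taken Pin$(2)$-equivariantly. The invariant is a stable equivariant map over $S^{1}$ between sphere bundles of the index bundle $\mathrm{ind}\,D$ and of $H^{+}$.

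The key input is Kronheimer--Mrowka's analysis (Theorem~\ref{thm: KM Dehn}) of the effect of the twist on $K3\#K3$. The loop $\gamma$ in $SO(4)$ lifts to a path in $\mathrm{Spin}(4)$ from $1$ to $-1$. Consequently the Dehn twist acts on the spinor bundle over one $K3$ summand by $-1$, that is, by the central element of Pin$(2)$, and trivially on the other summand. So the families invariant of $E_t$ is the product invariant twisted by the nontrivial element of $\pi_{1}$ of the Pin$(2)$-equivariant gauge group.

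The $S^{2}\times S^{2}$ summand is where the new work lies. Its Bauer--Furuta invariant is trivial, since it contributes only an $\R^{1}$-factor to $H^{+}$ with a null map. So the naive connected-sum formula kills the invariant, and this is why Lin's stronger tool is needed. I would follow Lin and use Pin$(2)$-equivariant stable homotopy, or equivalently families Pin$(2)$-monopole Floer/Bauer--Furuta theory. In that setting the contribution of $S^{2}\times S^{2}$ is a Pin$(2)$-equivariant Euler class, which is not nilpotent in the relevant Borel cohomology. I would therefore apply Pin$(2)$-equivariant Borel cohomology (or the $\mathrm{Pin}(2)$-equivariant $K$-theory degree) to both the product family and $E_t$. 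The difference should be detected by a class of the form $\chi(H^{+})\cdot(\text{twist of }K3\#K3)$.

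I expect the main obstacle to be this final computation: showing that the Kronheimer--Mrowka obstruction survives multiplication by the stabilization coming from $S^{2}\times S^{2}$. I would first try to compute in $\pi_{*}^{\mathrm{Pin}(2)}$ of spheres, using the known structure of the relevant elements (the Mahowald-type element $\eta$ and its Pin$(2)$ analogue). The goal is a nonzero class in a Borel-completed or Tate-localized group, where the Euler class of the extra $\tilde{\R}$ becomes invertible, so that the stabilization cannot destroy nontriviality.
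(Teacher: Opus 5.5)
Your topological step is fine: $t$ is the identity off the neck, so it acts trivially on $H^{2}(X;\Z)$, and Theorem~\ref{thm: QuinnPerron} applies. Your choice of tool also agrees with the paper, which records only that Lin uses the $\Pin(2)$-equivariant families Bauer--Furuta invariant. The gap is the decisive step. You leave it open, and both routes you suggest for it fail.

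Let $D\in\{\Sigma S^{V},S^{W}\}^{\Pin(2)}$ be the component of the families invariant of the mapping torus that distinguishes it from the product family. It is computed in the quaternionic trivialization of the index bundle, which is unique because $Sp(n)$ is simply connected. The $-1$ on one summand is invisible $S^{1}$-equivariantly, since the path $e^{i\pi s}$ undoes it; so it is not really a twist by a gauge loop. Nonequivariantly, $D=\eta\cdot BF(K3\#K3)=\eta\cdot\eta^{2}=\eta^{3}\in\pi_{3}^{s}$, because in the quaternionic trivialization one $K3$ factor gets composed with $J$ of the generator of $\pi_{1}(SO(4))$.

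This class is beyond the cohomological tools you propose:
\begin{itemize}
\item Ordinary cohomology does not see $\eta^{3}$, and neither does $KO$, since $KO_{3}=0$.
\item A Borel-cohomology degree $\alpha$ with $f^{*}\tau_{W}=\alpha\,\tau_{V}$ would lie in degree $\mathrm{rank}\,W-\mathrm{rank}\,V=7-8=-1$, so it vanishes.
\item In mod $2$ Borel cohomology the Euler class of $\tilde{\R}$ is $q$, and $H^{*}(B\Pin(2);\Z/2)=\Z/2[q,v]/(q^{3})$. So it is nilpotent, contrary to your claim.
\end{itemize}

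More seriously, your final goal, a nonzero class in a group where $e(\tilde{\R})$ is invertible, is impossible.
\begin{itemize}
\item Since $\Pin(2)$ acts freely on $S(\mathbb{H})$, we have $S^{\infty\tilde{\R}}\wedge S(\mathbb{H})_{+}\simeq *$. So inverting $a_{\tilde{\R}}$ also inverts $a_{\mathbb{H}}$; the Tate construction inverts $a_{\mathbb{H}}$ directly.
\item On $S^{1}$-fixed points the Seiberg--Witten map is the linear injection with cokernel $H^{+}$. Hence, up to suspension, $f\circ a_{V_{\mathbb{H}}}=a_{W_{\mathbb{H}}}\circ a_{H^{+}}$ for both families.
\item After localization, $f=a_{W_{\mathbb{H}}}a_{H^{+}}a_{V_{\mathbb{H}}}^{-1}$ is determined by linear data. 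The mapping torus and the product then become indistinguishable.
\end{itemize}

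So $D$ is $a_{\tilde{\R}}$-power torsion. By the gluing formula, the gauge-theoretic content of the theorem is exactly that the \emph{first} power does not kill it: $a_{\tilde{\R}}\cdot D\neq 0$. Using the cofibre sequence $(\Pin(2)/S^{1})_{+}\to S^{0}\xrightarrow{a_{\tilde{\R}}}S^{\tilde{\R}}$, this is equivalent to $D$ not being the transfer of an $S^{1}$-equivariant class. Proving it requires a genuine computation in unlocalized $\Pin(2)$-equivariant stable homotopy. That computation is the heart of the argument and is absent from your proposal. You would also need to compare against both spin lifts $\pm\id$ of a putative isotopy.
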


Thus the Dehn twist considered by Kronheimer--Mrowka for $K3\# K3$ remains exotic even after taking the connected sum with $S^{2}\times S^{2}$.
Many exotic phenomena on closed $4$-manifolds are known to disappear after taking the connected sum with a single  copy of $S^{2}\times S^{2}$ (see, e.g., \cite{BS13,Akb14,DKPR,AKMR}).
Theorem~\ref{thm: Lin Dehn} shows that the property of a diffeomorphism being exotic can, in some cases, persist under a single stabilization by $S^{2}\times S^{2}$.

The proof of Theorem~\ref{thm: Lin Dehn} uses the $ \Pin(2)$-equivariant families Bauer--Furuta invariant, whereas the proof of Theorem~\ref{thm: KM Dehn} requires only the nonequivariant families Bauer--Furuta invariant.

\begin{rem}[Addendum after 2021]
\label{rem: Seifert Dehn twist}
One of the major developments after 2021 concerning exotic diffeomorphisms is the study of Dehn twists along Seifert fibered $3$-manifolds.  
Using the Seifert circle action, one can define a Dehn twist analogous to the Dehn twist along $S^{3}$ described above.  
This construction turns out to provide a rich source of exotic diffeomorphisms.
See \cite{konno-mallick-taniguchi,KLMME,KangParkTaniguchi,miyazawaDehn,KLMME2}.
As another development, it had long been an open problem whether an irreducible closed $4$-manifold can admit exotic diffeomorphisms.  
This question was answered in the affirmative by Baraglia and the author~\cite{baraglia2024irreducible4manifoldsadmitexotic}.
\end{rem}

\subsection{$\pi_{n}(\Diff)$ vs.\ $\pi_{n}(\Homeo)$ ($n>0$)}
\label{subsection: higher homotopy groups}

So far, Sections~\ref{section: result by gauge theory, but not gauge theory for families} and \ref{section: Ruberman BK} concerned $\pi_{0}(\Diff(X)) \to \pi_{0}(\Homeo(X))$.
What about higher homotopy groups?
Are there examples of $4$-manifolds $X$ for which $\pi_{n}(\Diff(X)) \to \pi_{n}(\Homeo(X))$ is not an isomorphism?
To the best of the author's knowledge, the first result in this direction is due to Watanabe, as a consequence of his work disproving the $4$-dimensional Smale conjecture.
His proof uses Kontsevich characteristic classes and is completely different from the gauge-theoretic methods that are the main theme of this article.

\begin{thm}[Watanabe~\cite{Wa18}]
\label{theo: Watanabe}
The natural map
\[
\pi_{1}(\Diff(S^{4})) \to \pi_{1}(\Homeo(S^{4}))
\]
is not injective.
\end{thm}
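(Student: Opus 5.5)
The plan is to follow Watanabe's route: produce a nontrivial element of $\pi_{1}(\Diff(S^{4}))$ that is detected by a smooth invariant, and then show that it dies in $\pi_{1}(\Homeo(S^{4}))$. Rather than working with $\pi_{1}$ directly, I would pass to classifying spaces. An element of $\pi_{1}(\Diff(S^{4}))$ is the same as an $S^{4}$-bundle over $S^{2}$, via the clutching construction. For framed bundles it is convenient to use the subgroup $\Diff(D^{4},\partial)$, since $\Diff(S^{4})\simeq O(5)\times\Diff(D^{4},\partial)$ by Cerf/Smale-type arguments. So I would aim to construct a class in $\pi_{1}(\Diff(D^{4},\partial))\cong\pi_{2}(B\Diff(D^{4},\partial))$ that is nonzero, whose image in $\pi_{1}(\Diff(S^{4}))$ is still nonzero, and whose image in $\pi_{1}(\Homeo(S^{4}))$ is zero.

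\textbf{Detecting nontriviality.} For this I would use Kontsevich's configuration space integrals. For a framed $D^{4}$-bundle over a closed $k$-manifold, integrating pulled-back propagators over fiberwise configuration spaces of trivalent graphs gives characteristic numbers. For $k=2$ the relevant graph is the $\Theta$-graph with $2$ vertices (degree considerations in dimension $4$ force this). The bundles themselves I would build by \emph{graph surgery}, a family version of Goussarov--Habiro clasper surgery. One embeds Borromean-type families of spheres (Hopf-link handles) into $D^{4}$ along the $\Theta$-graph, and performs families of surgeries parametrized by $S^{2}$. Watanabe's computation, a localization of the integral near the surgery locus, shows that the invariant pairs nontrivially with the $\Theta$-surgered family. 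That gives a nonzero element of $\pi_{1}(\Diff(D^{4},\partial))\otimes\mathbb{Q}$. One then checks that it survives to $\pi_{1}(\Diff(S^{4}))$: the $O(5)$ factor is finite rationally in this degree, and the relevant framing corrections vanish.

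\textbf{Vanishing topologically.} This is the step I expect to be the main obstacle, because there is no naive topological analogue of the invariant to argue with. The plan is to use the Alexander trick, which says $\Homeo(D^{4},\partial)$ is contractible. Hence every element of $\pi_{1}(\Diff(D^{4},\partial))$ maps to zero in $\pi_{1}(\Homeo(D^{4},\partial))$. What remains is to compare this with $\Homeo(S^{4})$. Via the fibration $\Homeo(D^{4},\partial)\to\Homeo(S^{4})\to$ (germs at a point) $\simeq\mathrm{TOP}(4)$-type space, the image of our class in $\pi_{1}(\Homeo(S^{4}))$ factors through the contractible group, so it is trivial. Combining this with the nonvanishing above shows that $\pi_{1}(\Diff(S^{4}))\to\pi_{1}(\Homeo(S^{4}))$ has nonzero kernel.
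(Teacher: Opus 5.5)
Your overall architecture is right, and it matches the route the survey attributes to Watanabe. (The survey gives no proof of its own; it only says that Kontsevich characteristic classes are used.) A rationally nontrivial class in $\pi_{1}(\Diff(D^{4},\partial))$ stays nonzero in $\pi_{1}(\Diff(S^{4}))$, because $\pi_{2}(SO(5))=0$ in the homotopy fibration $\Diff(D^{4},\partial)\to\Diff^{+}(S^{4})\to SO(5)$. It dies in $\pi_{1}(\Homeo(S^{4}))$ because $\Diff(D^{4},\partial)\to\Homeo(S^{4})$ factors through $\Homeo(D^{4},\partial)$, which is contractible by the Alexander trick. So the step you single out as the main obstacle is the formal one. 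All the content is in the smooth detection, and that is where your proposal fails as written.

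\textbf{The $\Theta$-graph is the wrong graph.} With fiber dimension $4$, a trivalent graph with $V$ vertices has $E=3V/2$ edges, each carrying a propagator of degree $3$. Integrating over the $4V$-dimensional fiberwise configuration space gives a class of degree $3E-4V=V/2$.
\begin{itemize}
\item \emph{Wrong degree.} $\Theta$ has $V=2$, so it gives a degree-$1$ class, which cannot be evaluated on a family over a $2$-dimensional base. Degree considerations force $V=4$, not $V=2$.
\item \emph{It vanishes.} In even fiber dimension the propagator has odd degree, so a graph is oriented by an ordering of its edges. The automorphism of $\Theta$ exchanging two of its edges reverses this orientation. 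Hence $\Theta=-\Theta=0$ in the relevant graph space $\mathcal{A}^{\mathrm{even}}$, and there is no nonzero $\Theta$-invariant of $D^{4}$-bundles to localize.
\item \emph{Sanity check.} A nonzero degree-$1$ class would detect infinite-order elements of $\pi_{0}(\Diff(D^{4},\partial))$, i.e.\ exotic diffeomorphisms of $S^{4}$. Their existence is a well-known open problem, which is another sign this cannot work.
\end{itemize}

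Watanabe instead uses the complete graph $K_{4}$ ($4$ vertices, $6$ edges). Every automorphism of $K_{4}$ permutes its edges evenly, so it gives a nonzero element of $\mathcal{A}^{\mathrm{even}}_{2}$. Its configuration space integral is a degree-$2$ class, and nonvanishing is proved by evaluating it on a family obtained by graph surgery along $K_{4}$. If you replace $\Theta$ by $K_{4}$ in the detection step, your outline becomes Watanabe's argument.
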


\begin{rem}[Addendum after 2021]
Recently, Auckly and Ruberman \cite{auckly2025familiesdiffeomorphismsembeddingspositive} generalized Ruberman's construction of exotic diffeomorphisms (Theorem~\ref{thm: Ruberman's first thm}) to higher homotopy groups:

\begin{thm}[Auckly--Ruberman~\cite{auckly2025familiesdiffeomorphismsembeddingspositive}]
\label{theo: Auckly-Ruberman}
Given $n\geq0$, there exists a simply-connected closed smooth $4$-manifold $X$ for which the natural map
\[
\pi_{n}(\Diff(X)) \to \pi_{n}(\Homeo(X))
\]
is not injective.
\end{thm}

\end{rem}

Can one also show, for higher homotopy, that $\pi_{n}(\Diff(X)) \to \pi_{n}(\Homeo(X))$ is not surjective?
The following theorem of Baraglia and the author provides the first example.

\begin{thm}[Baraglia--K.~\cite{BK21}]
\label{theo: BK K3}
The natural map
\[
\pi_{1}(\Diff(K3)) \to \pi_{1}(\Homeo(K3))
\]
is not surjective.
\end{thm}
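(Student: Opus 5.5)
Since $\pi_{1}(\Diff(K3))$ and $\pi_{1}(\Homeo(K3))$ classify bundles over $S^{2}$ with fiber $K3$ (via the clutching construction), I will construct a topological $K3$-bundle $E \to S^{2}$ and show that it admits no smooth structure compatible with its bundle structure. Equivalently, its clutching loop in $\Homeo(K3)$ will not be homotopic to a loop of diffeomorphisms.

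\emph{Construction.} By Theorem~\ref{thm: QuinnPerron}, $\pi_{0}(\Homeo^{+}(K3))\cong \Aut(H^{2}(K3;\Z))$. In particular, there are homeomorphisms acting by prescribed lattice automorphisms, including ones that reverse the orientation of $H^{+}$. To get a loop, I will instead use the topological splitting $K3 \cong_{\mathrm{top}} 3(S^{2}\times S^{2})\# 2(-E_{8})$, where $E_{8}$ denotes the topological $E_8$-manifold. On each $S^{2}\times S^{2}$ summand (with a ball removed) I take a circle action rotating the $S^{2}$ factors. Done carefully, this gives a family over $S^{2}$, or commuting $S^{1}$-actions near the necks, whose monodromy data produce a bundle $E$ in which the bundle $H^{+}(E)\to S^{2}$ of positive-definite subspaces is nontrivial. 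Concretely, the quantity to arrange is $w_{2}(H^{+}(E))\neq 0$, equivalently an odd obstruction for the rank-$3$ bundle over $S^{2}$. The simplest version: build the bundle by clutching along a loop in $\Homeo(K3)$ obtained from an $SO(3)$-type loop acting on $H^{+}$ through one $S^{2}\times S^{2}$ summand, extended by the identity elsewhere, which is possible topologically after Freedman.

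\emph{Obstruction.} Suppose $E$ were smoothable. Then I would run the families Seiberg--Witten theory, namely the finite-dimensional approximation of Section~\ref{section: fin dim approx family}, for the spin structure. This gives a $\Pin(2)$-equivariant families Bauer--Furuta map over $S^{2}$, between Thom spaces of the virtual index bundle $D$ (a quaternionic bundle of virtual rank $-\sigma/16=1$) and $H^{+}(E)$. Applying $\Pin(2)$- or $S^{1}$-equivariant K-theory or cohomology, in the style of Baraglia's constraint, the existence of such a map forces a relation among characteristic classes: for spin families with $b^{+}=3$ and $\sigma=-16$, the constraint says $w_{2}(H^{+}(E))=0$, or more precisely $w_{3}$/$w_{2}$ must vanish in this degree. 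Since $E$ was built with $w_{2}(H^{+}(E))\neq 0$, this is a contradiction, so the clutching class is not in the image of $\pi_{1}(\Diff(K3))$.

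\emph{Main obstacle.} The hardest step is deriving the characteristic-class constraint. One has to compute the equivariant Euler class or degree of the families Bauer--Furuta map restricted to fixed points, using the fact that the unfamilied $K3$ Bauer--Furuta invariant is a generator (Seiberg--Witten invariant $\pm1$). One then has to compare the K-theoretic Euler classes of $D$ and $H^{+}$ over $S^{2}$, where a rank-$3$ real bundle with nonzero $w_{2}$ gives a mod-$2$ discrepancy. I would first try the $\Pin(2)$-equivariant K-theory argument, since Furuta's $10/8$ proof uses it and it is sensitive to $H^{+}$ as an $\R$-representation twisted by the sign character. Failing that, I would fall back to the $\Z/2$-equivariant cohomology with respect to $j\in\Pin(2)$.
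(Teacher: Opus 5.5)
There is a genuine gap, and it is fatal as written: over the base $S^{2}$ the class you want to use vanishes for every bundle. The group $\Homeo^{+}(K3)$ acts on $\Gr^{+}(H^{2}(K3;\R))$ only through its action on cohomology, i.e.\ through the discrete group $\Aut(H^{2}(K3;\Z))$. Hence $\Gr^{+}_{E}$, and with it $H^{+}(E)$, is determined by the monodromy $\pi_{1}(B)\to\Aut(H^{2}(K3;\Z))$. For a bundle over $S^{2}$ clutched along a loop in $\Homeo(K3)$ this monodromy is trivial. So $\Gr^{+}_{E}$ is a product, $H^{+}(E)$ is a trivial rank-$3$ bundle, and $w_{2}(H^{+}(E))=0$ for \emph{every} topological $K3$-bundle over $S^{2}$. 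A loop of homeomorphisms acts trivially on $H^{2}$, so there is no ``$SO(3)$-type loop acting on $H^{+}$''. Likewise, circle actions on the $S^{2}\times S^{2}$ summands cannot twist $H^{+}(E)$. The constraint $w_{2}(H^{+}(E))=0$ of Theorem~\ref{thm: BK Steenrod} is therefore vacuous over $S^{2}$, and your obstruction step yields nothing.

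This is why the paper works over $T^{2}$ (Remark~\ref{rem: BK K3 proof}).

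\emph{Step 1: a non-smoothable family over $T^{2}$.} You need a topological family whose two monodromies $A_{1},A_{2}\in\Aut(H^{2}(K3;\Z))$ commute and each preserve the orientation of $H^{+}$. Then each is separately realized by a diffeomorphism, by Matumoto--Donaldson together with Quinn--Perron. But you also need $w_{2}(H^{+}(E))\neq 0$. For instance, write $K3\cong 2(-E_{8})\#3(S^{2}\times S^{2})$. Take two commuting homeomorphisms acting by $-1$ on $H^{2}$ of the first two, respectively the last two, $S^{2}\times S^{2}$ summands, arranged to agree on the shared summand. Then $H^{+}(E)=L_{1}\oplus L_{2}\oplus L_{3}$ with $w_{1}(L_{i})=a,\ a+b,\ b$, which gives $w_{2}(H^{+}(E))=ab\neq 0$. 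Theorem~\ref{thm: BK Steenrod} then shows this family is non-smoothable.

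\emph{Step 2: killing the lower obstructions.} Your proposal omits this step entirely. To turn the non-smoothable $T^{2}$-family into an element of $\pi_{1}(\Homeo(K3))$ outside the image of $\pi_{1}(\Diff(K3))$, you must choose diffeomorphisms $g_{1},g_{2}$ representing $A_{1},A_{2}$ whose commutator is \emph{smoothly} isotopic to the identity. This is not automatic, since a priori $[g_{1},g_{2}]$ is only known to be topologically isotopic to the identity. The paper supplies this input from the global Torelli theorem for $K3$, via Giansiracusa and Giansiracusa--Kupers--Tshishiku.

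\emph{Conclusion.} Once Step 2 is done, the topological family and a smooth family agree over the $1$-skeleton of $T^{2}$. They differ over the $2$-cell by a loop $\ell$ in $\Homeo(K3)$. If $\ell$ came from $\pi_{1}(\Diff(K3))$, the topological family would be smoothable, which is a contradiction.
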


For the proof, see Remark~\ref{rem: BK K3 proof}.
The key tool is a gauge-theoretic constraint for families of $K3$ surfaces due to Baraglia and the author \cite{BK19} (explained later as Theorem~\ref{thm: BK Steenrod}), together with the ideas used from Section~\ref{section: from non-smoothable to Diff Homeo} onward.

\subsection{Non-smoothable families with smoothable total spaces}
\label{section: from non-smoothable to Diff Homeo}

The proof of Theorem~\ref{theo: BK K3} uses the idea, mentioned in Section~\ref{sec1}, of \emph{detecting non-smoothable families}.
The following Theorem~\ref{main theorem} was the first result to detect non-smoothable families using gauge theory for families, demonstrating that families gauge theory captures a very delicate phenomenon.
Let $-E_{8}$ denote a negative-definite $E_{8}$ manifold, i.e.\ an oriented simply-connected topological closed $4$-manifold with intersection form the negative-definite $E_{8}$ lattice.

\begin{thm}[Kato--K.--Nakamura~\cite{KKN21}]
\label{main theorem}
For $3\le m\le 6$, define the topological $4$-manifold $X$ by
\begin{align}
\label{eq: Freedman decomp}
X = 2(-E_{8})\# m(S^{2}\times S^{2}).
\end{align}
(Then $X$ is homeomorphic to $K3\#(m-3)(S^{2}\times S^{2})$, so it admits a smooth structure.)
There exists a fiber bundle
\[
X \to E \to T^{m-2}
\]
with structure group $\Homeo(X)$ such that:
\begin{enumerate}
  \setlength{\parskip}{0.1cm}
  \setlength{\itemsep}{0.1cm}
\item[(1)] The total space $E$ admits a smooth structure (as a manifold).
\item[(2)] However, for \emph{no} smooth structure on $X$ can the bundle $X \to E \to T^{m-2}$ be made smooth as a fiber bundle. Equivalently, for every smooth structure on $X$, the structure group cannot be reduced from $\Homeo(X)$ to $\Diff(X)$.
\end{enumerate}
\end{thm}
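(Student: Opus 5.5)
Construct the bundle explicitly from commuting homeomorphisms, and prove non-smoothability with a families Seiberg--Witten / Bauer--Furuta obstruction coming from finite-dimensional approximation; smoothability of the total space is a separate, purely topological check.

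\emph{Construction.} Use the decomposition \eqref{eq: Freedman decomp}, $X = 2(-E_{8})\# m(S^{2}\times S^{2})$. On each $S^{2}\times S^{2}$ summand take the self-diffeomorphism $f$ that reverses orientation on both $S^{2}$ factors, for example by complex conjugation on each factor. It acts on $H^{2}$ by $-1$ and is the identity near a disk. Extending $f$ by the identity on the other summands gives homeomorphisms $f_{1},\dots,f_{m}$ of $X$ with disjoint supports, so they commute. Take the $m-2$ commuting maps $g_{i}=f_{i}\circ f_{m-1}$ or a similar combination, and form the multiple mapping torus over $T^{m-2}$. The combinations are chosen so that the monodromy acts on a maximal positive definite subspace $H^{+}$ (of dimension $m$) with prescribed orientation behaviour. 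The aim is that the real vector bundle $H^{+}(E)\to T^{m-2}$ becomes a sum of trivial lines and flat lines $\ell_i$ with nonzero top Stiefel--Whitney class $w_{m-2}(H^{+}(E))=\prod_i w_1(\ell_i)\neq 0$. By Theorem~\ref{thm: QuinnPerron} the topological isotopy class is determined by the action on $H^{2}$, so the bundle can be rebuilt in whatever model is convenient.

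\emph{Smoothable total space.} The family is flat over $T^{m-2}$, and the only non-smooth pieces are the $-E_{8}$ summands, on which the monodromy is trivial. So $E$ contains $T^{m-2}\times 2(-E_{8})$ glued to a smooth family of $m(S^{2}\times S^{2})$. Replace $T^{m-2}\times(2(-E_8)\# 3(S^2\times S^2))$ by something smooth: since $2(-E_8)\#3(S^2\times S^2)\approx K3$, the product $T^{m-2}\times K3$ is smooth and is homeomorphic to the corresponding piece. Arranging that the monodromy fixes three $S^{2}\times S^{2}$ summands, we can regroup so that the non-smooth part is a product with a homotopy $K3$. Then apply the topological product structure theorem, using $\dim E\ge 5$ and vanishing of the Kirby--Siebenmann obstruction because $E$ is homeomorphic to a product-like manifold, to get a smooth structure on $E$. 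Equivalently, exhibit $E$ homeomorphically as $K3\times T^{m-2}$ glued to a smooth family. This step needs care with which summands the monodromy moves, and that dictates the choice of $g_i$.

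\emph{Non-smoothability.} Suppose the bundle were smooth for some smooth structure on $X$. Take the spin structure; $X$ is spin since $2(-E_8)$ and $S^2\times S^2$ are even. The monodromy preserves the spin structure, possibly after passing to a finite cover, which does not kill $w_{m-2}$ if chosen well. This gives a families spin Seiberg--Witten setup with index $\mathrm{ind}\, D=-\sigma/8\cdot$(quaternionic) $=2$ quaternionic dimensions, i.e. $\C^{4}$, and obstruction bundle $H^{+}(E)$. Finite-dimensional approximation (Section~\ref{section: fin dim approx family}) gives a fiberwise $\Pin(2)$-equivariant map between sphere bundles over $T^{m-2}$, restricting to the fixed-point inclusion given by $H^{+}(E)$. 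A families version of Furuta's $10/8$ argument using $\Pin(2)$-equivariant $K$-theory, or of the Bryan/Baraglia--Konno constraint, forces $w_{m-2}(H^{+}(E))=0$ whenever $b^{+}-(m-2)\le 2$ and $-\sigma/16=1$. The relevant constraint is of the form $b^{+}\ge -\sigma/8+2+\dim B$-type, failing exactly in the range $3\le m\le 6$. This contradicts the construction.

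The main obstacle is this last step: proving the precise families $10/8$-type inequality with the Stiefel--Whitney class correction. This needs an equivariant $K$-theoretic degree argument over a nontrivial base, via equivariant Thom isomorphism and Adams operations or characteristic classes of the $\Pin(2)$-representation bundles. I would first try Baraglia's approach: take $\Pin(2)$- or $S^{1}$-equivariant cohomology Euler classes of the index and obstruction bundles. The vanishing of the Euler class of $H^{+}(E)$ should follow whenever the quaternionic index is small relative to $b^+$.
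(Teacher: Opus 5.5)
Your outline has the right shape: a multi-mapping torus of commuting homeomorphisms supported in distinct $S^2\times S^2$ summands, Kirby--Siebenmann theory for (1), and $\Pin(2)$-families Seiberg--Witten theory for (2). As set up, however, the two halves contradict each other, and the decisive constraint is neither proved nor correctly stated.

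\emph{The construction and step (1).} With $g_i=f_i\circ f_{m-1}$, the $(m-1)$-st positive line has $w_1=a_1+\cdots+a_{m-2}$, where the $a_i$ generate $H^1(T^{m-2};\Z/2)$. Hence $w_{m-2}(H^+(E))=(m-1)\,a_1\cdots a_{m-2}$, which vanishes for $m=3,5$. For $m=3$ the monodromy preserves the orientation of $H^+(K3)$, so by \cite{Matu86} and Theorem~\ref{thm: QuinnPerron} it is topologically isotopic to a diffeomorphism, and the mapping torus is smoothable. Use $f_1,\dots,f_{m-2}$ themselves, as the paper does. Then $H^+(E)\cong\ell_1\oplus\cdots\oplus\ell_{m-2}\oplus\underline{\R}^2$ and $w_{m-2}(H^+(E))=a_1\cdots a_{m-2}\neq0$.

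Your plan for (1) cannot coexist with this. If the monodromy fixed three $S^2\times S^2$ summands, they would span a monodromy-invariant positive $3$-plane. Then $H^+(E)\cong\underline{\R}^3\oplus H'$ with $\rank H'=m-3$, which forces $w_{m-2}(H^+(E))=0$. Also, Theorem~\ref{thm: QuinnPerron} controls only the isotopy classes of the monodromies, not the bundle over $T^{m-2}$, so it does not let you rebuild $E$ in a convenient model.

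The input you are missing is $ks(2(-E_8))=2\,ks(-E_8)=0$. Although $W=2(-E_8)\setminus D^4$ is not smoothable, $W\times S^1$ is smoothable rel boundary: it has dimension $5$, its relative $ks$ vanishes so it is PL, and PL manifolds of dimension $\le 7$ are smoothable. Hence $W\times T^{m-2}=(W\times S^1)\times T^{m-3}$ is smoothable rel boundary, which also avoids the $\pi_7(PL/O)$ obstruction when $m=6$. Glue it to the smooth multi-mapping torus with fiber $m(S^2\times S^2)\setminus D^4$, which is a product near the neck.

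\emph{Step (2).} You leave the non-smoothability step, the heart of the theorem, unproved. The inequality you propose, $b^+\ge -\sigma/8+2+\dim B$, fails for every $m$ once $\dim B=m-2$, so it cannot be what singles out $3\le m\le 6$. The $S^1$-equivariant constraint of Theorem~\ref{thm: Baraglia constraint} is useless here, because the $\underline{\R}^2$ summand forces $w_m(H^+(E))=0$.

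What is needed is the $\Pin(2)$ version of that proof (Remark~\ref{rem: Baraglia constraint other cohomologies}, \cite{Ba21}). In $H^*_{\Pin(2)}(\mathrm{pt};\Z/2)=\Z/2[q,u]/(u^3)$ one has $e(\mathbb{H})=q$ and $e(\widetilde{\R})=u$. Writing $w_j=w_j(H^+(E))$, this gives $e_{\Pin(2)}(H^+(E)\otimes\widetilde{\R})=w_m+w_{m-1}u+w_{m-2}u^2$. The divisibility relation $\alpha\,e(V_{\C})=e(H^+(E)\otimes\widetilde{\R})\,e(W_{\C})$ holds in $H^*(B;\Z/2)\otimes\Z/2[q,u]/(u^3)$. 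There the quaternionic Euler classes are monic in $q$ of degrees $k>k'$, with $k-k'=-\sigma/16=1$; the index is one quaternionic dimension, i.e.\ $\C^2$, not $\C^4$. Comparing top $q$-coefficients forces $\alpha=0$, and then $w_{m-2}(H^+(E))=0$, a contradiction.

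This is exactly why the base has dimension $b^+-2$: since $u^3=0$, the relation only sees $w_{b^+},w_{b^+-1},w_{b^+-2}$. The argument works for every $m\ge 3$.

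Finally, no finite cover is needed, and an even-degree cover along the $i$-th circle would in fact kill $w_1(\ell_i)$. Lifting each $f_i$ to the spin structure by the identity off its support gives commuting lifts. So the vertical tangent bundle of $E$, and hence of any smoothing of it, is spin.
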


\begin{rem}
Gauge theory for families is used in the proof of (2), whereas (1) is proved using Kirby--Siebenmann theory \cite{KS77}.
\end{rem}

Here is the idea behind the construction of $E$ in Theorem~\ref{main theorem}.
Choose one of the $S^{2}\times S^{2}$ summands in \eqref{eq: Freedman decomp}, consider a diffeomorphism supported there (chosen so that the eventual family-gauge-theory constraint will detect it), and extend it by the identity outside; this yields a self-homeomorphism of $X$.
Choose $(m-2)$ such summands and perform the same construction to obtain self-homeomorphisms $f_{1},\dots,f_{m-2}$ of $X$.
Their supports are disjoint, hence they commute, so we can form the (multi) mapping torus.
(Equivalently, apply the Borel construction to the continuous $\Z^{m-2}$-action on $X$ defined by $f_{1},\dots,f_{m-2}$, to obtain a family over $T^{m-2}=B(\Z^{m-2})$.)
Define $E$ to be this multi-mapping torus.
Since we used \emph{homeomorphisms} to build the mapping torus, the structure group is $\Homeo(X)$ (rather than $\Diff(X)$).
Moreover, since $-E_{8}$ does not admit a smooth structure, it is a priori unclear whether one can simultaneously smooth $f_{1},\dots,f_{m-2}$ while keeping them commuting.
That is, it is not obvious whether the structure group of $E$ can be reduced to $\Diff(X)$--and gauge theory for families shows that it cannot.
The idea of using a topological connected sum decomposition to build such examples goes back to Nakamura \cite{Naka03,Naka10}, where non-smoothability of \emph{group actions} on $4$-manifolds was studied.

\subsection{Consequences of the existence of non-smoothable families}
\label{subsection: consequences of Non-smoothable}

We record an immediate consequence of the existence of non-smoothable bundles.
For a topological bundle $E$ with fiber a smooth manifold $X$, being ``non-smoothable'' means that if $\varphi_{E}\colon B\to B\Homeo(X)$ is its classifying map, then the following lifting problem has no solution:
\begin{align*}
\xymatrix{
     & B\Diff(X)\ar[d]\\
    B \ar@{-->}[ru] \ar[r]_-{\varphi_{E}}  &  B\Homeo(X).
    }
\end{align*}
Hence, if there exists a non-smoothable bundle with fiber $X$, obstruction theory immediately implies that the inclusion $\Diff(X)\inc\Homeo(X)$ is \emph{not} a weak homotopy equivalence.

Obstruction theory further shows not only failure of weak equivalence but also, depending on the dimension of the base, up to which degree the maps $\pi_{n}(\Diff(X)) \to \pi_{n}(\Homeo(X))$ fail to be isomorphisms.
In Theorem~\ref{main theorem}, if we fix a smooth structure on $X=2(-E_{8})\# m(S^{2}\times S^{2})$ (e.g.\ regard $X$ as $K3\#(m-3)(S^{2}\times S^{2})$), then since our family has base $T^{m-2}$, we learn that $\pi_{n}(\Diff(X)) \to \pi_{n}(\Homeo(X))$ is not an isomorphism for at least one $n\in\{0,\dots,m-3\}$.
In particular, when $m=3$, for a smooth $4$-manifold $X$ homotopy equivalent to $K3$, $\pi_{0}(\Diff(X)) \to \pi_{0}(\Homeo(X))$ is not an isomorphism; more strongly, it is not surjective (since we are detecting a non-smoothable object).
This recovers Theorem~\ref{theo: Donaldson, Morgan-Szabo}.

The above results are closer to detecting elements of
\[
\Coker\bigl(\pi_{n}(\Diff(X)) \to \pi_{n}(\Homeo(X))\bigr),
\]
i.e.\ elements of $\pi_{n}(\Homeo(X))$ not coming from $\pi_{n}(\Diff(X))$.
To truly produce such cokernel elements, the obstructions up to degree $n$ must vanish (e.g.\ when $B=S^{n+1}$).
On the other hand, Wall's theorem (\cite{Wa64}, Theorem~\ref{thm: Wall}) says that for certain $4$-manifolds, $\pi_{0}(\Diff(X)) \to \pi_{0}(\Homeo(X))$ is \emph{surjective}.
By killing the low-degree obstructions via Wall's theorem, Theorem~\ref{main theorem} yields the following.
Since $\Diff(X)$ is not closed in $\Homeo(X)$ with respect to the natural $C^{0}$-topology, let us consider the homotopy quotient
\[
\Homeo(X) \sslash \Diff(X) := \bigl(E\Diff(X) \times \Homeo(X)\bigr)/\Diff(X).
\]

\begin{thm}[Kato--K.--Nakamura~\cite{KKN21}]
\label{main theorem K3S2S2 htpy quot}
For $X = K3\# S^{2}\times S^{2}$,
\[
\pi_{1}\bigl(\Homeo(X)\sslash\Diff(X)\bigr) \neq 0.
\]
\end{thm}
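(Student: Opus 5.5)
We take $m=4$ in Theorem~\ref{main theorem}. Then $X=2(-E_8)\#4(S^2\times S^2)$ is homeomorphic to $K3\#S^2\times S^2$, and we fix this smooth structure. Theorem~\ref{main theorem}(2) gives a topological bundle $X\to E\to T^{2}$ whose classifying map $\varphi_E\colon T^2\to B\Homeo(X)$ admits no lift to $B\Diff(X)$. The homotopy quotient $\Homeo(X)\sslash\Diff(X)$ is the homotopy fiber of $B\Diff(X)\to B\Homeo(X)$. We will show that $\pi_1\bigl(\Homeo(X)\sslash\Diff(X)\bigr)\neq 0$ by showing that every obstruction to lifting $\varphi_E$ other than the one living in this group vanishes.

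We run obstruction theory on the skeleta of $T^2=S^1\vee S^1\cup e^2$.

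First, lifting over the $0$-skeleton is trivial.

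Second, lifting over the $1$-skeleton requires that the restrictions of $\varphi_E$ to the two circles lift. These restrictions are the mapping tori of the homeomorphisms $f_1,f_2$ from the construction of $E$. A lift exists exactly when each $[f_i]\in\pi_0(\Homeo(X))$ lies in the image of $\pi_0(\Diff(X))$. This follows from Theorem~\ref{thm: QuinnPerron} together with Wall's Theorem~\ref{thm: Wall}, applied with $M=K3$, which has indefinite intersection form. Indeed $\pi_0(\Homeo^+(X))\cong\Aut(H^2(X;\Z))$, and every such automorphism is realized by a diffeomorphism. So $\pi_0(\Diff(X))\to\pi_0(\Homeo(X))$ is surjective, and hence $\pi_0(\Homeo(X)\sslash\Diff(X))$ is trivial.

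Third, we choose diffeomorphisms $g_i$ topologically isotopic to $f_i$. This gives a lift over $S^1\vee S^1$.

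The remaining obstruction, to extending the lift over the $2$-cell, lies in
\[
H^2\bigl(T^2;\pi_1(\Homeo(X)\sslash\Diff(X))\bigr),
\]
possibly with local coefficients twisted by $\pi_1(B\Homeo(X))$. This obstruction is nonzero for every choice of lift on the $1$-skeleton, since otherwise $E$ would be smoothable. A nonzero element of a group with coefficients in $A$ forces $A\neq0$, which gives the theorem.

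The point needing care is the identification of the fibration and the coefficient group. The map $\Diff(X)\to\Homeo(X)$ is not a closed inclusion, so we use the homotopy quotient, and the fibration sequence
\[
\Homeo(X)\sslash\Diff(X)\to B\Diff(X)\to B\Homeo(X)
\]
needs justification. We also need the fiber to be simple enough for classical obstruction theory. The fiber is connected by the second step above. If it is not simple, we avoid the issue entirely by working with the concrete lifting problem: the $2$-cell attaches along the commutator loop.

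In concrete terms, commuting smoothings would exist if the loop in $\Diff(X)$ built from isotopies $g_1g_2g_1^{-1}g_2^{-1}\simeq\id$ could be filled. With $g_i$ fixed, the obstruction is the class of this loop in $\Homeo(X)$ modulo $\pi_1(\Diff(X))$. This class is an element of $\pi_1$ of the homotopy fiber, so its nonvanishing already exhibits a nonzero element.

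Finally, we check that the obstruction is nonzero for every choice of lift on the $1$-skeleton, not just one. This follows because any such choice yields a smooth structure on the bundle over the $1$-skeleton, and a trivial obstruction would smooth all of $E$, contradicting Theorem~\ref{main theorem}(2). I expect the main technical obstacle to be this careful fibration and obstruction bookkeeping, given the non-closed inclusion.
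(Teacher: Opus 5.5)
Your proposal is correct and follows the paper's route. You take $m=4$ in Theorem~\ref{main theorem} to get a non-smoothable family over $T^{2}$ with fiber $K3\# S^{2}\times S^{2}$, kill the $\pi_{0}$-obstruction by Wall's theorem with $M=K3$ together with Quinn--Perron (using $\sigma(X)\neq 0$ so that $\Homeo=\Homeo^{+}$), and read off that the remaining obstruction forces $\pi_{1}(\Homeo(X)\sslash\Diff(X))\neq 0$.

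Your simplicity worry and the ``concrete'' commutator paragraph are unnecessary. The latter also overlooks the part of $\pi_{1}$ of the fiber that maps onto $\Ker(\pi_{0}\Diff(X)\to\pi_{0}\Homeo(X))$, and the loop it describes lives in $\Homeo(X)$, not $\Diff(X)$. Simply argue by contradiction: if $\pi_{1}$ vanished, the fiber would be simply connected, so $B\Diff(X)\to B\Homeo(X)$ would be $2$-connected, and every map $T^{2}\to B\Homeo(X)$ would lift.
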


\begin{rem}
\label{rem: BK K3 proof}
The proof of Theorem~\ref{theo: BK K3} (non-surjectivity of $\pi_{1}(\Diff(K3)) \to \pi_{1}(\Homeo(K3))$) also proceeds by detecting a non-smoothable family over a $2$-torus $T^{2}$ and then killing low-degree obstructions.
Concretely, to detect non-smoothability we use gauge-theoretic constraints for families of $K3$ surfaces (Baraglia--the author \cite{BK19}; see Theorem~\ref{thm: BK Steenrod} below), and to kill obstructions we use results on the global Torelli theorem for $K3$ due to Giansiracusa \cite{Gian09} and Giansiracusa--Kupers--Tshishiku \cite{GKT21}.
\end{rem}

\subsection{$\Diff(X)\inc\Homeo(X)$ is not a weak homotopy equivalence for most $X^{4}$}
\label{subsection Baraglia application}

Just after \cite{KKN21}, Baraglia proved the following definitive result:

\begin{thm}[Baraglia~\cite{Ba21}]
\label{theo Baraglia}
Let $X$ be a smooth oriented simply-connected closed $4$-manifold whose intersection form is indefinite and whose signature has absolute value $>8$.
Then $\Diff(X)\inc\Homeo(X)$ is not a weak homotopy equivalence.

More precisely, there exists some $n$ with $0\le n \le \min\{b^{+}(X),b^{-}(X)\}-1$ such that the natural map
\[
\pi_{n}(\Diff(X)) \to \pi_{n}(\Homeo(X))
\]
is not an isomorphism.
\end{thm}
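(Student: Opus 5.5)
We argue by constructing a non-smoothable topological family and then applying the obstruction-theoretic reasoning of Section~\ref{subsection: consequences of Non-smoothable}. Up to reversing orientation (which swaps $b^{+}$ and $b^{-}$ and changes the sign of $\sigma$), we may assume $\sigma(X)<-8$.

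\textbf{Step 1: a topological model.} Since the form is indefinite and odd or even, Freedman's classification lets us write $X$ up to homeomorphism as a connected sum of copies of $S^{2}\times S^{2}$, $\CP^{2}$, $\overline{\CP}^2$, and (in the even case) $-E_{8}$ manifolds. Set $k=\min\{b^{+}(X),b^{-}(X)\}=b^{+}(X)$, and arrange $k$ ``positive'' summands, each a copy of $S^{2}\times S^{2}$ or $\CP^{2}$. On each of them choose a self-diffeomorphism $f_i$ that reverses the orientation of its $H^{+}$; for $S^{2}\times S^{2}$ this is the swap composed with $-1$, and for $\CP^{2}$ it is complex conjugation. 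Extend each $f_i$ by the identity to get homeomorphisms of $X$. They have disjoint supports and hence commute, so the multi-mapping torus gives a topological bundle $X\to E\to T^{k}$. The $k$ generators each reverse orientation of a different line in $H^{+}$, so the bundle $H^{+}(E)\to T^{k}$ is a sum of $k$ nontrivial line bundles, and $w_{k}(H^{+}(E))\neq 0$.

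\textbf{Step 2: a families constraint.} Next we need a families Seiberg--Witten or Bauer--Furuta constraint in the spirit of Theorem~\ref{thm: BK Steenrod} and Baraglia's families $10/8$ or Donaldson-type inequality. The statement we want is that for a smooth family with fiber $X$ over a compact base $B$ carrying a fiberwise $\spc$ structure $\fraks$ with $\sigma(X)/8-\dots$, roughly $-\sigma(X)/8>0$ giving positive index, nonvanishing of $w_{b^{+}}(H^{+})$ forces a contradiction. The mechanism is that the families Bauer--Furuta map gives an $S^{1}$-equivariant map between Thom spaces over $B$. Comparing equivariant $K$-theoretic or cohomological Euler classes (tom Dieck/Borel) then shows that $e(H^{+})$ must vanish when the index $d=-\sigma/8$ (in the spin case), or more generally $(c_1(\fraks)^2-\sigma)/8$, is positive. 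For odd forms we choose a characteristic $c$ with $c^{2}=\sigma$ on the negative part, so that $d=(c^2-\sigma)/8$ can be made $\ge 1$ as long as $|\sigma|>8$. This is where the hypothesis $|\sigma|>8$ enters. Step~2 is the main obstacle: proving the families Euler-class vanishing and making sure the index is positive in both parities. A further point is that the fiberwise $\spc$ structure must be preserved by monodromy, which requires choosing the $f_i$ to fix $c$ and checking that the obstruction to lifting $\fraks$ over $T^{k}$ vanishes.

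\textbf{Step 3: conclusion.} Steps 1 and 2 show that $E$ admits no reduction to $\Diff(X)$ for the given smooth structure. If $\pi_{n}(\Diff(X))\to\pi_{n}(\Homeo(X))$ were an isomorphism for all $n\le k-1$, then $B\Diff(X)\to B\Homeo(X)$ would be $k$-connected, with surjectivity on $\pi_k$ following from surjectivity on $\pi_{k-1}$ of the groups. Obstruction theory over the $k$-dimensional complex $T^{k}$ would then produce a lift of the classifying map, which is a contradiction. Hence some $n$ with $0\le n\le k-1$ has a non-isomorphism. Note that $\pi_0$ need not be surjective a priori, but the connectivity argument handles that case as well.
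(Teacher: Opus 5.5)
Your overall architecture is the paper's: a multi-mapping torus over $T^{b^+}$, Theorem~\ref{thm: Baraglia constraint} as the families input (your Step~2 is just that theorem), and obstruction theory over a $b^+$-dimensional base. Your even case and your Step~3 are fine.

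\textbf{The odd case fails in Step~1.} Write the form as $a\langle 1\rangle\oplus b\langle -1\rangle$ with $a=b^+(X)$ and $b=b^-(X)$.
\begin{itemize}
\item Every characteristic element has odd coefficient on each hyperplane class $h_i$. Complex conjugation on the $i$-th $\CP^2$ sends $h_i\mapsto -h_i$, so your monodromy preserves no $\spc$ structure and the constraint cannot be applied.
\item More fundamentally, $a\CP^2\# b\overline{\CP}^2$ is a smooth manifold and your $f_i$ are diffeomorphisms of it. Take $X=\CP^2\#10\overline{\CP}^2$, which satisfies the hypotheses: your $E\to S^1$ is then a smooth bundle and cannot be non-smoothable.
\item For an exotic $X$ the constraint only involves the topological data $H^+(E)$, $c$, $\sigma$. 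These are realized by this smooth family, so the constraint can never produce a contradiction.
\item Passing to the smooth model $a(S^2\times S^2)\#(b-a)\overline{\CP}^2$ does not help. A characteristic element fixed by the swap-type maps has the form $2x_i(a_i-b_i)$ on each $H$ and has odd coefficients on the $e_j$, so $c^2\le -(b-a)=\sigma$ and $(c^2-\sigma)/8\le 0$.
\end{itemize}
So your choice ``$c^2=\sigma$'' gives $d=0$, not $d\ge 1$, and it is not where $|\sigma|>8$ enters.

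\textbf{The missing idea.} In the odd case too, the family must be built on a topological decomposition containing a non-smoothable $-E_8$ block, as in Theorem~\ref{main theorem}.
\begin{itemize}
\item Assume $\sigma=a-b<-8$. The forms $a\langle 1\rangle\oplus b\langle -1\rangle$ and $aH\oplus (b-a-8)\langle -1\rangle\oplus(-E_8)$ are both odd, indefinite and unimodular with the same rank and signature, hence isomorphic.
\item The second form is odd precisely because $b-a-8\ge 1$. This is exactly where $|\sigma|>8$ is used.
\item By Freedman, $X$ is homeomorphic to $a(S^2\times S^2)\# M$. Here $M$ is a simply-connected closed topological $4$-manifold with form $(b-a-8)\langle -1\rangle\oplus(-E_8)$ and vanishing Kirby--Siebenmann invariant.
\item Apply your swap-type maps on the $a$ copies of $S^2\times S^2$. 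Take $c=\sum_j e_j$ on the $\langle -1\rangle$ summands of $M$, and zero on $-E_8$ and on the $H$'s.
\item This $c$ is characteristic and fixed by every $f_i$, and $c^2-\sigma=-(b-a-8)-(a-b)=8>0$. Since $w_a(H^+(E))\neq 0$, this contradicts Theorem~\ref{thm: Baraglia constraint}.
\end{itemize}

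The fiberwise $\spc$ structure you flagged does exist on any smoothing. The class $c$ extends over the total space via the fiberwise collapse $E\to T^a\times M$. Since $H^1(X;\Z/2)=0$ and $H^2(T^a;\Z)\to H^2(T^a;\Z/2)$ is onto, this extension can be corrected by a class pulled back from the base so that it reduces mod $2$ to $w_2$ of the vertical tangent bundle.
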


Thus the first part of Question~\ref{main question} (``Is $\Diff(X)\inc\Homeo(X)$ a weak homotopy equivalence?'') is answered in the negative for most simply-connected closed $4$-manifolds.

\begin{rem}[Addendum after 2021]
In 2023, Lin--Xie~\cite{lin2023configuration} proved that for any compact, orientable smooth $4$-manifold $X$, the map $\Diff(X) \inc \Homeo(X)$ is not rationally $2$-connected (and hence not a weak homotopy equivalence). 
Their technique is based on configuration space integrals, as in Watanabe's work~\cite{Wa18}, rather than on gauge theory.
\end{rem}

\begin{rem}
\label{rem: Baraglia spin}
When $X$ is spin, the range in Theorem~\ref{theo Baraglia} improves: there exists $n$ with $0\le n \le \min\{b^{+}(X),b^{-}(X)\}-3$ such that $\pi_{n}(\Diff(X)) \to \pi_{n}(\Homeo(X))$ is not an isomorphism.
For the gauge-theoretic reason, see Remark~\ref{rem: Baraglia constraint other cohomologies}.
\end{rem}

The proof of Theorem~\ref{theo Baraglia} proceeds as follows.
First, one proves a family version of Donaldson's diagonalization theorem (explained later as Theorem~\ref{thm: Baraglia constraint}), yielding constraints on smooth families of $4$-manifolds.
As in the construction of the non-smoothable families in Theorem~\ref{main theorem} (Section~\ref{section: from non-smoothable to Diff Homeo}), one builds a continuous bundle of $X$ over a torus of dimension $b^{+}(X)$ as a multi-mapping torus.
Using the prepared constraint, one shows that this bundle is non-smoothable.

Although Theorem~\ref{theo Baraglia} is stated for simply-connected $4$-manifolds, the same argument applies, for example, to manifolds obtained by taking a connected sum of a simply-connected $4$-manifold with finitely many copies of $S^{1}\times S^{3}$.
On the other hand, if one takes connected sums with factors such as $S^{1}\times Y$ or $S^{2}\times \Sigma_{g}$ (where $Y$ is any oriented closed $3$-manifold and $\Sigma_{g}$ is a closed oriented surface of genus $g\ge 1$), the proof of Theorem~\ref{theo Baraglia} no longer applies.
For this type of non-simply-connected $4$-manifold, the $\mathrm{Pin^{-}}(2)$-monopole theory developed by Nakamura \cite{Naka13,Naka15} works effectively.
In joint work \cite{KN20}, Nakamura and the author proved results parallel to Theorem~\ref{theo Baraglia} for such manifolds, using a family version of the $\mathrm{Pin^{-}}(2)$ monopole equations.

\subsection{$\Diff$ vs.\ $\Homeo$ for $4$-manifolds with boundary}
\label{section: boundary}

Baraglia's Theorem~\ref{theo Baraglia} is a very powerful theorem for closed 4-manifolds.
Taniguchi and the author \cite{KT20} extended it to $4$-manifolds with boundary.
To state the result, we briefly recall the Fr{\o}yshov invariant for $3$-manifolds (see Section~\ref{subsection KT constraint} for details).
This is a particularly important numerical invariant defined via Floer theory for $3$-manifolds.
For an oriented $\,\spc$ rational homology $3$-sphere $(Y,\frakt)$, the Fr{\o}yshov invariant $\delta(Y,\frakt)$ takes values in $\frac{1}{8}\Z$.
Although $\delta(Y,\frakt)$ depends on the spin$^{c}$ structure $\frakt$, when $Y$ is an integral homology sphere the spin$^{c}$ structure is unique, and we suppress it from the notation, writing simply $\delta(Y)$.
For integral homology spheres $Y$, the invariant $\delta$ takes values in $\Z$.
The group $\Theta_{3}^{\Z}$ of integral homology $3$-spheres modulo homology cobordism is a central object in low-dimensional topology, and the Fr{\o}yshov invariant induces a surjective homomorphism $\delta\colon \Theta_{3}^{\Z}\to \Z$.

For a manifold with boundary $X$, set $\Homeo(X,\partial)=\{f\in \Homeo(X)\mid f|_{Y}=\id_{Y}\}$ and define $\Diff(X,\partial)$ similarly.
Baraglia's theorem (Theorem~\ref{theo Baraglia}) extends to the following statement for 4-manifolds with boundary:

\begin{thm}[K.--Taniguchi~\cite{KT20}]
\label{theo KT appl}
Let $X$ be a smooth oriented simply-connected $4$-manifold with boundary, with $\sigma(X)\le 0$ and indefinite intersection form.
Assume the boundary $\partial X=Y$ is connected and an integral homology $3$-sphere.
Suppose either $\sigma(X)<-8$ and $\delta(Y)\le 0$, or $X$ is spin and $-\sigma(X)/8>\delta(Y)$.
Then
\begin{align*}
\Diff(X,\partial) \inc \Homeo(X,\partial)\quad \text{and}\quad
\Diff(X) \inc \Homeo(X)
\end{align*}
are not weak homotopy equivalences.
More precisely, the natural map $\pi_{n}(\Diff(X,\partial)) \to \pi_{n}(\Homeo(X,\partial))$ fails to be an isomorphism for some $n\in\{0,\ldots,b^{+}(X)-1\}$, and $\pi_{n}(\Diff(X)) \to \pi_{n}(\Homeo(X))$ fails to be an isomorphism for some $n\in\{0,\ldots,b^{+}(X)\}$.
\end{thm}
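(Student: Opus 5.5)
We follow the blueprint of Baraglia's Theorem~\ref{theo Baraglia}: build a topological multi-mapping torus over a torus of dimension $b^{+}(X)$, show via a families constraint that it cannot be smoothed rel boundary, and then convert non-smoothability into failure of isomorphism on homotopy groups by obstruction theory.

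\textbf{Step 1: a families constraint.} The needed input is a family version, for manifolds with homology sphere boundary, of the Frøyshov-type inequality. Let $E\to B$ be a smooth family with fiber $X$ and trivialized boundary family $B\times Y$. Let $H^{+}(E)\to B$ be the associated bundle of maximal positive definite subspaces. If $w_{b^{+}}(H^{+}(E))\neq 0$, then every characteristic vector $c$ of the intersection form should satisfy
\[
c^{2}-\sigma(X)\le 8\delta(Y),
\]
together with a spin refinement. We will prove this with the families Seiberg--Witten Floer (Manolescu-type) finite-dimensional approximation relative to the boundary. The families Bauer--Furuta map is a $\mathbb{T}$-equivariant map between Thom spaces over $B$, landing in the Thom space twisted by the Floer spectrum of $Y$. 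Equivariant $K$-theory or cohomology, combined with the nonvanishing Euler class of $H^{+}$, then forces the inequality. The condition ``$\sigma(X)<-8$ and $\delta(Y)\le0$'' is tailored so that some characteristic vector violates this bound. Since the form is indefinite and odd in that case, it is $p\langle1\rangle\oplus q\langle-1\rangle$, and we take $c=(1,\dots,1)$, giving $c^{2}-\sigma=0$. That choice does not work, so we instead use the spin-type refinement, in the form $-\sigma/8\le\delta$ when $X$ is spin. In the non-spin case we split off $-E_8$ summands, as in Step 2, and take $c$ that vanishes on $2E_8$. I expect pinning down the precise constraint to be the main analytic obstacle. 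The hardest part is controlling the reducible contribution at the boundary through the Floer homotopy type, so that $\delta(Y)$ enters with the right sign.

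\textbf{Step 2: the topological family.} By Freedman's classification and Boyer's extension to boundary, $X$ is homeomorphic rel boundary to a connected sum of a contractible manifold $C$ bounded by $Y$, copies of $\pm E_8$ and/or $\pm\CP^{2}$, and $S^{2}\times S^{2}$ summands. Indefiniteness supplies enough hyperbolic or $\langle\pm1\rangle$ pieces. In $b^{+}(X)$ such summands we choose commuting homeomorphisms $f_i$ with disjoint supports, each acting on $H^{2}$ by reversing the orientation of one positive direction; for example, $(-1)$ on $H_2(S^2\times S^2)$ realized by a diffeomorphism. The multi-mapping torus over $T^{b^{+}}$ fixes the boundary pointwise and has $w_{b^{+}}(H^{+})\neq0$. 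Meanwhile the negative part contains a configuration, such as $2(-E_8)$ or many $\overline{\CP}^2$, producing a characteristic $c$ with $c^{2}-\sigma(X)>8\delta(Y)$ exactly under the hypotheses. By Step 1, this family admits no reduction to $\Diff(X,\partial)$.

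\textbf{Step 3: obstruction theory.} The classifying map $T^{b^{+}}\to B\Homeo(X,\partial)$ does not lift to $B\Diff(X,\partial)$. The obstructions to lifting over a $b^{+}$-dimensional CW complex lie in $\pi_{k-1}$ of the homotopy fiber $\Homeo(X,\partial)\sslash\Diff(X,\partial)$ for $k\le b^{+}$. If $\pi_n(\Diff(X,\partial))\to\pi_n(\Homeo(X,\partial))$ were an isomorphism for all $n\le b^{+}-1$, these obstruction groups would vanish and a lift would exist. This gives the rel-boundary statement.

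For $\Diff(X)\to\Homeo(X)$, we compare the fibrations $\Diff(X,\partial)\to\Diff(X)\to\Diff(Y)$ and $\Homeo(X,\partial)\to\Homeo(X)\to\Homeo(Y)$. By Hatcher and Cerf-type results in dimension three, $\Diff(Y)\simeq\Homeo(Y)$. The five lemma then transfers the failure, at the cost of one degree, which is why the range is $n\le b^{+}(X)$.
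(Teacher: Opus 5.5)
Your skeleton is the intended one: a multi-mapping torus over $T^{b^{+}(X)}$ rel boundary, the families Fr{\o}yshov inequality of Theorem~\ref{theo KT}, obstruction theory for the rel-boundary statement, and then the restriction fibrations with $\Diff(Y)\simeq\Homeo(Y)$ to lose one degree in the absolute case. Step~3 is fine.

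The gap is in how Steps~1 and~2 fit together, which is exactly where the hypotheses get used. Theorem~\ref{theo KT} does not constrain every characteristic vector. It applies to a family of $\spc$ $4$-manifolds $(X,\fraks)\to E\to B$, so it only constrains $c=c_{1}(\fraks)$ fixed by the monodromy. Each $f_{i}$ must therefore both reverse a positive direction and fix some $c$ with $c^{2}-\sigma(X)>8\delta(Y)$, and you never check this. The options you offer fail:
\begin{itemize}
\item A reflection on a $\langle 1\rangle$ (i.e.\ $\CP^{2}$) summand negates the coefficient of $c$ there. That coefficient is odd for every characteristic vector, so such a reflection preserves no $\spc$ structure.
\item A negative part made of many $\overline{\CP}^{2}$'s never gives a violating vector. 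On a diagonal negative-definite part, every characteristic vector has $c^{2}\le\sigma$.
\item Once all positive directions must sit in $S^{2}\times S^{2}$ summands, two copies of $-E_{8}$ can be split off an odd form only when $\sigma(X)\le -17$.
\end{itemize}
Your back-and-forth in Step~1 (trying $c=(1,\dots,1)$, then invoking the spin refinement for a non-spin $X$) reflects that this step was never resolved.

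The missing idea is a lattice decomposition that puts every positive direction in a hyperbolic summand. Let $U$ be the intersection form of $S^{2}\times S^{2}$.
\begin{itemize}
\item In the odd case, $b^{+}\langle 1\rangle\oplus b^{-}\langle -1\rangle\cong b^{+}U\oplus(-E_{8})\oplus r\langle -1\rangle$ with $r=b^{-}-b^{+}-8$. Both sides are indefinite of the same rank and signature, and the right side is odd exactly when $r\ge 1$, i.e.\ exactly when $\sigma(X)<-8$. This is where that strict inequality enters, and one $-E_{8}$ suffices because $\delta(Y)\le 0$.
\item In the spin case the form is $b^{+}U\oplus(-\sigma(X)/8)(-E_{8})$.
\end{itemize}
By Freedman--Boyer, $X$ is then homeomorphic rel boundary to $C\#b^{+}(S^{2}\times S^{2})\#(-E_{8})\#r\overline{\CP}^{2}$, with one $\overline{\CP}^{2}$ possibly replaced by its fake version to match Kirby--Siebenmann invariants. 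In the spin case it is $C\#b^{+}(S^{2}\times S^{2})\#(-\sigma(X)/8)(-E_{8})$.

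Take $f_{i}$ to be $-1$ on the $i$-th $S^{2}\times S^{2}$ summand. Take $c$ to be zero on the $U$ and $E_{8}$ summands and $(1,\dots,1)$ on the $\langle -1\rangle$ summands; in the spin case take $c=0$. Then:
\begin{itemize}
\item every $f_{i}$ fixes $c$;
\item $w_{b^{+}}(H^{+}(E))\neq 0$;
\item $c^{2}-\sigma(X)=8>8\delta(Y)$ in the odd case, and $c^{2}-\sigma(X)=-\sigma(X)>8\delta(Y)$ in the spin case.
\end{itemize}
This contradicts Theorem~\ref{theo KT} for any smoothing of the family. You also need the smoothed family to carry a fiberwise $\spc$ structure. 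Rel boundary this is harmless: for simply-connected $X$, the gauge group of maps $X\to S^{1}$ equal to $1$ on $\partial X$ is weakly contractible. Without this decomposition and the invariance check, Step~2 does not produce a non-smoothable family.
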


For instance, the Gompf nucleus $N(2n)$ inside the elliptic surface $E(2n)$ is an example that satisfies the assumption of Theorem~\ref{theo KT}.
The proof of Theorem~\ref{theo KT appl} is based on Theorem~\ref{theo KT}, which will be explained later.

\begin{rem}[Addendum after 2021]
For $4$-manifolds with boundary, many exotic diffeomorphisms have been found since 2021.  
Many of these are Dehn twists along Seifert fibered $3$-manifolds, as mentioned in Remark~\ref{rem: Seifert Dehn twist}.  
For another example, see \cite{IKMT25}.
\end{rem}

\section{How to Apply Gauge Theory to 4D Topology}
\label{section: how to apply gauge theory to 4D top}

From here on, aiming toward an outline of proofs for some of the results stated in Section~\ref{section: Summary of the result on Diff vs. Homeo in 4D}, we begin explaining the content of gauge theory.
In this section, as preparation for gauge theory \emph{for families}, we describe---with as little jargon as possible---the basic methodology behind the classical applications of gauge theory to $4$-dimensional topology.
We ignore technical hypotheses in the exposition; for precise and detailed accounts, see \cite{DK90,FU91,Mo96,Ni20}.

\subsection{Invariants and constraints}
The standard recipe for applying gauge theory to $4$-dimensional topology can be summarized in the following three steps.

\begin{description}
\item[Step 1] On an oriented smooth closed $4$-manifold, write down a system of partial differential equations: either the anti-self-dual Yang--Mills equations or the Seiberg--Witten equations.
(Explicitly,
\begin{align*}
F_A^{+}=0\qquad\text{and}\qquad
\left\{
\begin{array}{l}
F_A^{+}=\sigma(\Phi,\Phi),\\[6pt]
\slashed{D}_{A}\Phi=0
\end{array}
\right.
\label{SW eq.}
\end{align*}
respectively, but understanding the symbols is not necessary for what follows.)
To formulate these PDEs one needs, of course, a smooth structure on the manifold.
More precisely, in addition to the smooth structure one fixes some non-topological data such as a Riemannian metric, together with certain topological data (e.g.\ a principal $G$-bundle for a suitable Lie group $G$, or a spin$^{c}$ structure).

\item[Step 2] From these are nonlinear PDEs, one extracts a finite-dimensional space called the moduli space, via the following procedure.
The solution space of the PDE (for generic choices) is an infinite-dimensional manifold endowed with an action of an infinite-dimensional group, called the gauge group.
Taking the quotient of the solution space by this action produces the moduli space, which, under genericity assumptions, is a finite-dimensional manifold
(although depending on the topology of the $4$-manifold, quotient singularities may occur).

\item[Step 3] Extract information about the original $4$-manifold from the moduli space.
There are two broad approaches:
\begin{itemize}
\item Method 1: Use the moduli space to define differential-topological \emph{invariants} of the $4$-manifold and distinguish manifolds by those invariants (one must then prove that the resulting invariant is independent of the choices of non-topological auxiliary data made in Step 1).
Typically one reduces to the case where the moduli space is $0$-dimensional and obtains an integer-valued invariant by counting points.
\item Method 2: Deduce \emph{constraints} on classical invariants of the $4$-manifold (typically on the intersection form) from geometric properties of the moduli space.
\end{itemize}
\end{description}

\begin{table}[htb]
\caption{Invariants and constraints}
\begin{center}
  \begin{tabular}{|c|c|}\hline
    Method 1: invariants & Method 2: constraints \\ \hline
    Donaldson invariants~\cite{Do90} & Donaldson's diagonalization theorem~\cite{Do83} \\
    Seiberg--Witten invariants~\cite{W94} & Donaldson's Theorems B, C~\cite{Do86} \\
    Bauer--Furuta invariant~\cite{BF04} & Furuta's $10/8$-inequality~\cite{Fu01} \\ \hline
  \end{tabular}
  \end{center}
\label{label: table invariants and constraints}
\end{table}

Table~\ref{label: table invariants and constraints} lists typical examples of invariants (Method 1) and constraints (Method 2).
Strictly speaking, the bottom row---the Bauer--Furuta invariant and Furuta's $10/8$-inequality---uses information more refined than the moduli space itself: one views the Seiberg--Witten equations as a map between infinite-dimensional spaces and extracts information from a \emph{finite-dimensional approximation} of that map.
We will explain this in Section~\ref{section: fin dim approx of Seiberg--Witten eq}.
First, we give a more detailed, nontechnical picture of moduli spaces via a finite-dimensional model.

\subsection{Finite-dimensional models}
\label{subsection Kuranishi model}

The anti-self-dual Yang--Mills equations and the Seiberg--Witten equations (taking the gauge action into account) are nonlinear elliptic PDEs.
Ellipticity entails Fredholm properties; intuitively, it means such PDEs admit finite-dimensional models in the following sense.
One can view the equations as the zero set of a section
\begin{equation}
\label{main section of a Hilbert bundle}
s:\ \mathcal{B}\to \mathcal{E}
\end{equation}
of a Hilbert bundle $\mathcal{H}\to \mathcal{E}\to \mathcal{B}$, where $\mathcal{B}$ is an infinite-dimensional manifold and each fiber is a Hilbert space $\mathcal{H}$.
The zero-locus $s^{-1}(0)$ is the moduli space (solutions modulo gauge).
If at a zero $x\in s^{-1}(0)$ the derivative $ds_{x}:T_{x}\mathcal{B}\to \mathcal{H}$ is surjective (this can be arranged generically), then by the implicit function theorem in infinite dimensions, $s^{-1}(0)$ is a manifold; Fredholmness guarantees moreover that it is \emph{finite}-dimensional.
In fact, more strongly, near each point it is modeled by the zero set of a section
\[
s':\ B\to E
\]
of a finite-rank vector bundle $E\to B$ over a finite-dimensional manifold $B$; this is the \emph{Kuranishi model}.
Heuristically, although both the base $\mathcal{B}$ and the fiber $\mathcal{H}$ are infinite-dimensional, the differential of $s$ identifies the infinite-dimensional parts, leaving a finite-dimensional residual part captured by $s':B\to E$.
In particular, $\dim s^{-1}(0)=\dim B-\operatorname{rank}E$, the \emph{formal} or \emph{virtual} dimension, computable via the Atiyah--Singer index theorem \cite{AS68}.
As suggested by the finite-dimensional model, this formal dimension is defined independently of whether $s$ is transverse to the zero section and is also independent of non-topological auxiliary choices, such as the Riemannian metric.

\subsection{Finite-dimensional approximation of the Seiberg--Witten equations and gauge-theoretic constraints}
\label{section: fin dim approx of Seiberg--Witten eq}

We now explain the finite-dimensional approximation introduced by Furuta in his proof of the $10/8$-inequality \cite{Fu01} and subsequently developed into the Bauer--Furuta invariant \cite{BF04}.
(References for this subsection and the next, Section~\ref{subsection BF inv}, include Furuta's original paper \cite{Fu01}, the Bauer--Furuta paper \cite{BF04}, and Bauer's gluing paper \cite{B04}; see also Furuta's preprint \cite{FurutaBonn}, his survey \cite{F02}, and Bauer's survey \cite{Ba04}.)
Finite-dimensional approximation will also be fundamental later in the family setting.
Moreover, it provides a unified explanation of the various gauge-theoretic constraints listed on the right side of Table~\ref{label: table invariants and constraints}.
Recall two of those statements (we discuss Donaldson's Theorems B, C later):

\begin{thm}[Donaldson~\cite{Do83}]
\label{thm: Donaldson diag}
Let $X$ be a smooth oriented closed $4$-manifold with negative-definite intersection form.
Then the intersection form of $X$ is diagonalizable over $\Z$.
\end{thm}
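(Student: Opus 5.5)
We prove Donaldson's diagonalization theorem (Theorem~\ref{thm: Donaldson diag}) with the Seiberg--Witten equations and Furuta's finite-dimensional approximation, which fits the framework of this section. We deduce it from Elkies' theorem, a purely lattice-theoretic result: a negative-definite unimodular lattice $L$ of rank $n$ in which every characteristic vector $c$ satisfies $c^{2}\le -n$ is isomorphic to the diagonal lattice $n\langle -1\rangle$. So the gauge theory only has to show
\[
c_{1}(\fraks)^{2}+b_{2}(X)\le 0
\]
for every $\spc$ structure $\fraks$ on $X$, since the classes $c_{1}(\fraks)$ modulo torsion run over all characteristic vectors of the intersection form.

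Fix $\fraks$ and a Riemannian metric. The Seiberg--Witten map
\[
SW\colon i\Omega^{1}(X)\oplus\Gamma(S^{+})\to i\Omega^{+}(X)\oplus\Gamma(S^{-})
\]
is taken in Coulomb gauge $d^{*}a=0$, with the constant-gauge/harmonic part dealt with as usual. It has the form $L+C$, where $L$ is Fredholm and linear, $C$ is compact and quadratic, and the whole map is $S^{1}$-equivariant. Since $b^{+}(X)=0$ and $b_{1}$ can be handled via the harmonic $1$-forms (or we first reduce to $b_{1}=0$ by surgery along loops, which keeps the form negative definite), compactness of the moduli space gives properness. Restricting to $V=L^{-1}(W)$ for large finite-dimensional subspaces $W$ and projecting yields an $S^{1}$-equivariant map between one-point compactifications
\[
f\colon (\C^{a}\oplus\R^{p})^{+}\to(\C^{a'}\oplus\R^{q})^{+}.
\]
The index theorem gives
\[
a-a'=\ind_{\C}\slashed{D}=\frac{c_{1}(\fraks)^{2}-\sigma(X)}{8}=\frac{c_{1}(\fraks)^{2}+b_{2}(X)}{8},
\]
and $q-p=b^{+}(X)=0$.

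On $S^{1}$-fixed points $f$ restricts to the linear part $\R^{p}\to\R^{q}$, which is an isomorphism up to homotopy because $b^{+}=0$. Its degree is therefore $\pm1$. Now suppose, for contradiction, that $k=a-a'>0$. We apply equivariant $K$-theory (tom Dieck's degree formula, or Borel cohomology with $S^{1}$ acting freely off the fixed set). For an $S^{1}$-map $f$ whose fixed-point map has degree $\pm1$, the Euler class of the domain representation divides that of the target, up to the fixed-point degree. In Borel cohomology $H^{*}(BS^{1})=\Z[u]$ this reads $u^{a}=d\,u^{a'}$ after accounting for the fixed part, which forces $a\le a'$. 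Hence $c_{1}(\fraks)^{2}+b_{2}\le0$, and Elkies' theorem finishes the proof.

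The main obstacle is making the finite-dimensional approximation rigorous: choosing $W$ so that the solutions of the approximated equation stay in a bounded ball, uniformly, so that $f$ is genuinely a map of spheres. This relies on the a priori bound on Seiberg--Witten solutions coming from the Weitzenb\"ock formula, together with a compactness argument showing that approximate solutions lie near genuine ones. The equivariant degree argument is then routine. It is cleanest to present it by taking Borel-cohomology Euler classes of $f$ restricted to the complement of the fixed points, where the action is free.
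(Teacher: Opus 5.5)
Your proposal follows essentially the same route as the paper, which gets Donaldson's theorem as the case $B=\{\mathrm{pt}\}$, $b^{+}(X)=0$ of Baraglia's families diagonalization theorem: finite-dimensional approximation, then pulling back the $S^{1}$-equivariant Thom class and restricting to the fixed-point spheres to get a divisibility of Euler classes, then Elkies' theorem (the paper works mod $2$ and only sketches $b_{1}=0$; your surgery reduction and integer coefficients are fine). One slip: that restriction gives $\alpha\,u^{a}=\pm u^{a'}$ with $\alpha\in H^{2(a'-a)}(BS^{1};\Z)$, not $u^{a}=d\,u^{a'}$ — the factor $\alpha$ sits on the domain side, which is what forces $a\le a'$ — and the argument should not restrict $f$ to the free part, since $f$ can send points with nonzero spinor component into the fixed set.
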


\begin{thm}[Furuta~\cite{Fu01}]
\label{thm: Furuta 10/8}
Let $X$ be a smooth oriented spin closed $4$-manifold with indefinite intersection form.
Then
\[
b_{2}(X)\ \ge\ \frac{5}{4}\,|\sigma(X)|+2.
\]
\end{thm}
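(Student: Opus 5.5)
The plan is to run Furuta's finite-dimensional approximation of the Seiberg--Witten map and then apply equivariant $K$-theory to the resulting map. Since $X$ is spin, the spin structure makes the Seiberg--Witten map on $X$, after fixing Coulomb gauge, equivariant under $\Pin(2)=S^{1}\cup jS^{1}\subset Sp(1)$. Spinors carry the quaternionic representation $\mathbb{H}$. Self-dual forms carry the sign representation $\tilde{\R}$, on which $j$ acts by $-1$.

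First we reduce to the case $\sigma(X)\le 0$ by reversing orientation if needed; if $\sigma(X)=0$ the inequality is trivial. We also pass to the minimal case: surgering $S^{2}\times S^{2}$ summands does not help directly, so we simply work with $b^{+}(X)\ge 1$, which holds because the form is indefinite. Write $\sigma(X)=-16k$ using Rokhlin's theorem, so that the spinor index is $\ind \slashed{D}=2k$ quaternionic dimensions. By Theorem~\ref{thm: Donaldson diag}-type compactness, namely the a priori bounds from the Weitzenböck formula, the Seiberg--Witten map $\mathcal{V}\to\mathcal{W}$ between Sobolev completions is a compact perturbation of the linear Fredholm map $l=\slashed{D}\oplus d^{+}+d^{*}$, and it is proper on bounded sets. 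Projecting onto large finite-dimensional subspaces $W_{\lambda}$ (spanned by eigenvectors of $l^{*}l$ below $\lambda$) and $V_{\lambda}=l^{-1}(W_{\lambda})$ gives a $\Pin(2)$-equivariant map
\[
f\colon (\mathbb{H}^{m+2k}\oplus\tilde{\R}^{n})^{+}\to(\mathbb{H}^{m}\oplus\tilde{\R}^{n+b^{+}})^{+}
\]
of one-point compactifications. This map preserves basepoints, and the mere boundedness of solutions shows that it restricts on $S^{1}$-fixed points to the inclusion $(\tilde{\R}^{n})^{+}\to(\tilde{\R}^{n+b^{+}})^{+}$ induced by the linear part. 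Verifying that the approximation is a genuine equivariant map with this fixed-point behavior is routine but careful analysis, and I will cite \cite{Fu01,BF04}.

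The main step, and the obstacle, is to show that such a $\Pin(2)$-map forces $b^{+}\ge 2k+1$, that is, $b^{+}\ge |\sigma|/8+1$. Then $b_{2}=2b^{+}-\sigma$ gives $b_{2}\ge\frac{5}{4}|\sigma|+2$ by arithmetic. I would apply equivariant $K$-theory $K_{\Pin(2)}$ and take the degree of $f$ via Bott classes. Let $\tau_{V}\in K_{\Pin(2)}(V^{+})$ denote the Thom class. Then $f^{*}\tau_{W}=\alpha\cdot\tau_{V}$ for some $\alpha\in R(\Pin(2))$. Restricting to the $S^{1}$-fixed set and comparing the $K$-theoretic Euler classes gives the relation
\[
\alpha\cdot\lambda_{-1}(\tilde{\R}^{b^{+}})\ \text{vs.}\ \lambda_{-1}(\mathbb{H})^{2k}.
\]
Here I would use the explicit structure $R(\Pin(2))=\Z[z,w]/(w^{2}-1,\,zw-z)$, where $w=\tilde{\C}$ and $z=2-\mathbb{H}$. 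Evaluating characters at suitable elements, namely $j$ and generic elements of $S^{1}$, should show that $z^{2k}$ must be divisible by $(1-w)^{b^{+}}$ in a way that fails unless $b^{+}>2k$. The delicate point is the parity and character bookkeeping that yields the extra $+1$ rather than just $b^{+}\ge 2k$. The indefiniteness hypothesis enters there by excluding $b^{+}=0$, and, if needed, one can use the $+2$ improvement by noting that $b^{+}$ must also be odd-compatible; I would try first to get $b^{+}\ge 2k+1$ directly from the character of $\alpha$ at $j$.
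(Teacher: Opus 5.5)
Your overall route (Furuta's $\Pin(2)$-equivariant finite-dimensional approximation, followed by the $K_{\Pin(2)}$-degree $\alpha$ and its characters at $j$ and on $S^{1}$) is the one the paper points to. However, the decisive step is left undone, and as you set it up it would fail. There are three concrete errors.

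(i) $\ind_{\C}\slashed{D}=-\sigma(X)/8=2k$ complex dimensions, i.e.\ $k$ copies of $\mathbb{H}$, so the domain is $(\mathbb{H}^{m+k}\oplus\tilde{\R}^{n})^{+}$. By the argument below, a map with domain $\mathbb{H}^{m+2k}$ would force $b^{+}\ge 4k+1$, which is false for $K3$.

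(ii) $\tilde{\R}^{b^{+}}$ has no $\Pin(2)$-equivariant $K$-theory Thom class unless $b^{+}$ is even; this is the orientation issue of Remark~\ref{rem: Baraglia constraint other cohomologies} for $B=\mathrm{pt}$. When $b^{+}$ is even, $\tilde{\R}^{b^{+}}\cong\tilde{\C}^{b^{+}/2}$ has Euler class $(1-w)^{b^{+}/2}$, not $(1-w)^{b^{+}}$.

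(iii) The degree multiplies the Euler class of the \emph{domain}. For $b^{+}$ even and $n$ even (after suspending both sides by $\tilde{\R}$), restricting $f^{*}\tau_{W}=\alpha\tau_{V}$ to $S^{1}$-fixed points gives
\[
\alpha\, z^{m+k}=z^{m}\,(1-w)^{b^{+}/2}\qquad\text{in } R(\Pin(2)).
\]
Your relation, read as $\alpha(1-w)^{b^{+}}=z^{2k}$ (``$z^{2k}$ divisible by $(1-w)^{b^{+}}$''), becomes $0=(2-2\cos\theta)^{2k}$ at a generic $e^{i\theta}\in S^{1}$. So it would rule out $K3$ itself.

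The missing idea is how characters turn the correct relation into the inequality. Since $z(1-w)=0$, you cannot cancel $z^{m}$ in $R(\Pin(2))$; evaluate characters instead. At $g=e^{i\theta}\neq 1$ one has $\chi_{z}(g)=2-2\cos\theta\neq 0$ and $\chi_{1-w}(g)=0$. Hence $b^{+}>0$ (this is where indefiniteness is used) forces $\chi_{\alpha}\equiv 0$ on $S^{1}$, at $1$ by continuity. Use the $\Z$-basis $1,w,h_{1},h_{2},\dots$ of $R(\Pin(2))$, where $\chi_{h_{n}}(e^{i\theta})=2\cos n\theta$ and $\chi_{h_{n}}=0$ on $jS^{1}$. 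Vanishing on $S^{1}$ then means $\alpha=a(1-w)$ with $a\in\Z$, so $\chi_{\alpha}(j)=2a$ is \emph{even}. At $j$ we have $\chi_{z}(j)=\chi_{1-w}(j)=2$, hence $2a\cdot 2^{k}=2^{b^{+}/2}$, and so $b^{+}\ge 2k+2$ for $b^{+}$ even. Integrality of $\chi_{\alpha}(j)$ alone would only give $b^{+}\ge 2k$.

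For $b^{+}$ odd, compose $f$ with the inclusion of the target into $(\mathbb{H}^{m}\oplus\tilde{\R}^{n+b^{+}+1})^{+}$ (equivalently, pass to $X\# S^{2}\times S^{2}$). The fixed-point map stays a linear inclusion, and the even case gives $b^{+}+1\ge 2k+2$. So $b^{+}\ge 2k+1$ always, and your final arithmetic then applies. Both exponents in your relation are doubled, which is why you expected $2k+1$ to drop out directly at $j$.

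Two smaller points. First, $f^{S^{1}}$ is the linear inclusion because all nonlinear terms vanish when the spinor is zero, not because of a priori bounds. Second, if $b_{1}(X)>0$ you should first kill $b_{1}$ by surgery on loops, which preserves spin, $b^{+}$ and $\sigma$; otherwise $H^{1}$ shifts the real index.
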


Compared with Freedman's results for topological $4$-manifolds \cite{Fre82}, these theorems impose extremely strong restrictions on the intersection forms of \emph{smooth} $4$-manifolds.
What we wish to emphasize is that, despite their very different forms (and original proofs), both theorems can be derived in a parallel way from finite-dimensional approximations of the Seiberg--Witten equations; the difference lies only in which cohomology theory is applied.

In the Seiberg--Witten case, one can choose the finite-dimensional model large enough to capture the entire moduli space $s^{-1}(0)$.  
The key point that makes this possible is the crucial fact that the Seiberg--Witten moduli space is compact.
After suitable rephrasing, this yields a continuous map between finite-dimensional spheres
\[
f:\ S^{m}\to S^{n}\qquad (m,n\gg 0).
\]
Here the domain and target spheres arise as one-point compactifications of finite-dimensional approximations of the function spaces naturally associated with the PDE.
Once we have the finite-dimensional map $f$, we can study not only the zero set $f^{-1}(0)$ (which corresponds to the moduli space, modulo a natural $S^{1}$-action), but the map $f$ itself from a homotopy-theoretic viewpoint.

Crucially, there is a natural action of a Lie group $G$ on the domain and target spheres, and the map $f$ is \emph{equivariant}, reflecting an internal symmetry of the Seiberg--Witten equations.
(Generally $G=S^{1}$; when the $4$-manifold is spin, $G=\Pin(2)$.)
More concretely, there are real $G$-representations $V_{\R},W_{\R}$ and complex representations $V_{\C},W_{\C}$ such that
\[
S^{m}=V_{\R}^{+}\wedge V_{\C}^{+},\qquad
S^{n}=W_{\R}^{+}\wedge W_{\C}^{+},
\]
where ${}^{+}$ denotes one-point compactification, and $G$ acts linearly but differently on the real and complex parts.
(For $G=S^{1}$, $V_{\R},W_{\R}$ carry the trivial representation, while $V_{\C},W_{\C}$ carry the scalar action.)
These two representation types correspond to the two linearized operators appearing in the Seiberg--Witten setup: the real-linear Atiyah--Hitchin--Singer operator and the complex-linear Dirac operator.
The differences
\[
\dim V_{\R}-\dim W_{\R},\qquad \dim V_{\C}-\dim W_{\C}
\]
are computed by index theory and give rise to two kinds of characteristic numbers.
From the existence of an equivariant map $f$, Borsuk--Ulam-type theorems in appropriate equivariant cohomology theories produce inequalities relating these characteristic numbers, which in turn yield strong restrictions on smooth $4$-manifolds.

In practice, applying different Borsuk--Ulam-type theorems gives:
Furuta's $10/8$-inequality (Theorem~\ref{thm: Furuta 10/8}) arises by applying a Borsuk--Ulam statement in $\Pin(2)$-equivariant $K$-theory $K_{\Pin(2)}$ to $f$.
Similarly, applying Borsuk--Ulam statements in $S^{1}$-equivariant ordinary cohomology $H^{*}_{S^{1}}$ and in $\Pin(2)$-equivariant ordinary cohomology $H^{*}_{\Pin(2)}$ to $f$ recovers, respectively, Donaldson's diagonalization theorem (\cite{BF04}, Theorem~\ref{thm: Donaldson diag}) and Donaldson's Theorems B, C (see the right column of Table~\ref{label: table invariants and constraints}).
In short, different equivariant cohomology theories---$H^{*}_{S^{1}}$, $H^{*}_{\Pin(2)}$, $K_{\Pin(2)}$---produce distinct Borsuk--Ulam principles, which in turn yield different constraints on smooth $4$-manifolds.

\begin{rem}
Donaldson's Theorems B, C give constraints on intersection forms of smooth spin closed $4$-manifolds with small $b^{+}$.
As statements for \emph{closed} $4$-manifolds they are subsumed by the $10/8$-inequality, but they extend to manifolds with boundary \cite{FLin17} due to F.~Lin, yielding constraints different from Manolescu's $10/8$-type inequality for manifolds with boundary \cite{Ma14}.
In the family setting they again give constraints distinct from the family $10/8$-inequality \cite{Ba21}.
Note that F.~Lin's proof \cite{FLin17} does not use finite-dimensional approximation; it is based on his $\Pin(2)$-monopole Floer homology \cite{FLin18}.
A proof via finite-dimensional approximation (extended to the relative/family case) is given by Taniguchi and the author \cite{KT20}, which can be viewed as a relative version of Baraglia's argument \cite{Ba21}.
\end{rem}

\subsection{The Bauer--Furuta invariant}
\label{subsection BF inv}
Beyond constraints, the finite-dimensional approximation also leads to an invariant of $4$-manifolds: the Bauer--Furuta invariant \cite{BF04}.
It is defined from the finite-dimensional approximation $f:S^{m}\to S^{n}$ of the Seiberg--Witten equations by stabilizing to absorb ambiguities in the construction; the result is an element of a stable cohomotopy group.
The Seiberg--Witten invariant corresponds to counting $f^{-1}(0)$ (more precisely, the zero set modulo the natural $S^{1}$-action), i.e.\ to a degree defined using ordinary cohomology on the quotient of the domain by the $S^{1}$-action, and the Bauer--Furuta invariant naturally recovers the Seiberg--Witten invariant.
Moreover, the Bauer--Furuta invariant is strictly stronger than the Seiberg--Witten invariant (see, e.g., \cite{B04,FKM01}); among invariants defined from the Seiberg--Witten equations for closed $4$-manifolds, it is currently the most informative.

\subsection{The Seiberg--Witten Floer stable homotopy type}
\label{subsection Seiberg--Witten Floer stable homotopy type}

We now touch on $3$-manifolds and $4$-manifolds with boundary.
Manolescu \cite{Man03} considered a $3$-dimensional analogue of Furuta's and Bauer--Furuta's finite-dimensional approximation for the Seiberg--Witten equations and, applying Conley index theory, constructed the \emph{Seiberg--Witten Floer stable homotopy type}.
A landmark application was his disproof of the triangulation conjecture, one of the major open problems in topology \cite{Man16}.
The Seiberg--Witten Floer stable homotopy type is a space-level refinement of monopole Floer homology \cite{KM07} for $3$-manifolds:
monopole Floer homology is, roughly, a gauge-theoretic invariant assigning an abelian group to a $3$-manifold, constructed as the Morse homology of a functional associated with the Seiberg--Witten equations on an infinite-dimensional manifold.
The Seiberg--Witten Floer \emph{stable homotopy type} is a space (more precisely, a stable homotopy type or a spectrum) whose (equivariant) singular homology recovers monopole Floer homology (this recovery is proved in \cite{LM18}).
Monopole Floer homology contains powerful information about $3$-manifolds and cobordisms between them, and having a space-level object allows one to apply various generalized cohomology theories to extract information beyond Floer homology (recall how Furuta used $K$-theory in the closed $4$-dimensional setting to obtain the $10/8$-inequality).
This idea goes back to \cite{CJS95}, and there have been various attempts to construct space-level refinements for other Floer theories.
Analytic constructions are, to date, realized essentially only in Manolescu's work and its generalizations \cite{KM02,KLS18,SS21}.
(For combinatorial space-level refinements in neighboring areas such as Khovanov homology, Bar--Natan homology, and knot Floer homology, see \cite{LS14,S21,MS21}; for partial lifts to generalized cohomology in symplectic Floer theory, see \cite{AB21}.)

The Seiberg--Witten Floer stable homotopy type provides the target for the \emph{relative} (i.e.\ with boundary) Bauer--Furuta invariant of $4$-manifolds.
This is the space-level counterpart of the fact that the relative Seiberg--Witten invariant takes values in monopole Floer homology.

\section{Basic Idea of Gauge Theory for Families}
\label{section: how to consider families gauge theory}

We can now finally begin the explanation of gauge theory for families.
Rephrasing the setup: on an oriented smooth closed $4$-manifold $X$ we equip various auxiliary data (a Riemannian metric, a principal $G$-bundle or a $\spc$ structure) and consider either the anti-self-dual equations or the Seiberg--Witten equations.
This gives a section $s:\mathcal{B}\to\mathcal{E}$ of a bundle $\mathcal{H}\to\mathcal{E}\to\mathcal{B}$ with Hilbert-space fiber $\mathcal{H}$ over an infinite-dimensional manifold $\mathcal{B}$.
The zero set $s^{-1}(0)$ is locally modeled as the zero set of a section $s':B\to E$ of a finite-rank vector bundle over a finite-dimensional manifold.
Assume now that the formal dimension is negative.
In the finite-dimensional model this corresponds to the fiber dimension being larger than the base dimension.
Since in applications we typically study properties invariant under perturbations, we lose no generality in assuming a generic situation where $s:\mathcal{B}\to\mathcal{E}$ is transverse to the zero section.
But with negative formal dimension, transversality implies $s^{-1}(0)=\emptyset$.
An empty moduli space yields no information, and Furuta's finite-dimensional approximation method is also unavailable: the finite-dimensional approximation of $s$ becomes homotopic to a constant map.
Thus ordinary gauge theory is powerless when the formal dimension is negative.

On the other hand, in this situation one can sometimes extract meaningful information by considering \emph{families}.
Ruberman \cite{Rub98} was the first to apply this observation to topology: by implementing the idea below for $4$-manifold bundles with base $S^{1}$, he proved Theorem~\ref{thm: Ruberman's first thm}.
Let $B$ be a finite-dimensional manifold, and suppose we are given a smooth family of $4$-manifolds $X$ parametrized by $B$, i.e.\ a smooth fiber bundle $X\to E\to B$.
Assume moreover that the auxiliary data needed for gauge theory vary continuously along the fibers over $B$ (e.g.\ for the $SU(2)$-Yang--Mills equations, a family of $SU(2)$-bundles $P\to X$; for the Seiberg--Witten equations, a family of $\spc$ structures).
Fix a family of auxiliary choices along $E$ (for instance, a family of Riemannian metrics).
Then the gauge-theoretic equations become \emph{parametrized over $B$}: we obtain a family of infinite-dimensional bundles and sections as in \eqref{main section of a Hilbert bundle},
\[
\qquad
s=\bigsqcup_{b\in B}s_{b} \;:\; \bigsqcup_{b\in B}\mathcal{B}_{b}\;\to\; \bigsqcup_{b\in B}\mathcal{E}_{b}.
\]
Consider the zero set of this parametrized section, $s^{-1}(0)=\bigsqcup_{b\in B}s_{b}^{-1}(0)$, called the {\it parameterized moduli space}.
Its dimension equals the sum of the original formal dimension and $\dim B$.
For example, if $\dim B$ is the negative of the formal dimension, then generically $s^{-1}(0)$ is a $0$-manifold, which may be nonempty.
(Equivalently, in the finite-dimensional model, imagine a family of sections $s':B\to E$ of finite-rank bundles over a finite-dimensional manifold varying over a parameter space $B$.)

Thus the basic idea of gauge theory for families is to use parameterized moduli spaces precisely in situations where the formal dimension is negative.
As the finite-dimensional model also suggests, if the family $X\to E\to B$ is a \emph{trivial} bundle, then the parameterized moduli space $s^{-1}(0)$ is still empty, yielding no information.
Conversely, if, after perturbing, one can count the parameterized moduli space and show it never becomes empty, then the family $X\to E\to B$ must be a nontrivial bundle.

Ruberman's theorem (Theorem~\ref{thm: Ruberman's first thm}), which first detected exotic diffeomorphisms in dimension four, is proved by carrying out this idea for an $S^{1}$-family of $SO(3)$-anti-self-dual Yang--Mills equations.
One can do the same with the Seiberg--Witten equations, which enlarges the scope; this yields Theorem~\ref{thm:BK's first thm}.
Both Theorem~\ref{thm: Ruberman's first thm} and Theorem~\ref{thm:BK's first thm} define $\Z$- or $\Z/2$-valued invariants by counting the moduli space over $S^{1}$ in the case of formal dimension $-1$, and then prove their nontriviality by analytic arguments based on wall-crossing and gluing.

Moreover, the constructions used in Theorems~\ref{thm: Ruberman's first thm} and \ref{thm:BK's first thm} admit far-reaching generalizations.
For instance, Li--Liu \cite{LiLiu01} define $\Z$- or $\Z/2$-valued numerical invariants for families of $\spc$ $4$-manifolds over closed base manifolds via the Seiberg--Witten equations.
More generally, for any $4$-manifold bundle whose structure group is a suitable subgroup of the diffeomorphism group (e.g.\ preserving the isomorphism type of the principal $SO(3)$-bundle or of the $\spc$ structure), one can define characteristic classes packaging the information from the $SO(3)$-anti-self-dual Yang--Mills or Seiberg--Witten equations (the author \cite{K21}).
The classes of \cite{K21} are defined for arbitrary base spaces; when the base is a closed manifold, pairing with the fundamental class recovers the numerical invariants of \cite{LiLiu01}.
A family version of the Bauer--Furuta invariant can likewise be defined.
Constructions appear already in Bauer--Furuta \cite[Theorem~2.6]{BF04} and in Furuta's preprint \cite{FurutaBonn}, and were later reformulated by Szymik \cite{Szy10}.
The families Bauer--Furuta invariant recovers the families Seiberg--Witten invariants (Baraglia--the author \cite{BK19}).

\section{Finite-dimensional approximation of the Seiberg--Witten equations for families}
\label{section: fin dim approx family}

Many of the results in Section~\ref{section: Summary of the result on Diff vs. Homeo in 4D}—specifically Theorem~\ref{main theorem}, Theorem~\ref{main theorem K3S2S2 htpy quot}, Theorem~\ref{theo: BK K3}, Theorem~\ref{theo Baraglia}, Theorem~\ref{theo KT appl}, Theorem~\ref{thm: KM Dehn}, and Theorem~\ref{thm: Lin Dehn}—are all based on the finite-dimensional approximation of the Seiberg--Witten equations for families. In this section, we explain the basic idea of the method and present examples of the concrete constraints it yields for families of $4$-manifolds.

\subsection{Basic picture}
\label{subsection basic picture}

Let us set up the notation. Let $B$ be a finite CW complex (for most practical purposes, one may take $B$ to be a compact manifold). Let $X$ be an oriented smooth closed 4-manifold, and let $X \to E \to B$ be an oriented smooth fiber bundle with fiber $X$. By an oriented smooth fiber bundle we mean that the structure group reduces to the group $\Diff^{+}(X)$ of orientation-preserving diffeomorphisms of $X$.

\begin{rem}
\label{rem: smoothing by MW09}
If $B$ is a smooth manifold, one may replace $E$ by an isomorphic bundle so that the total space $E$ becomes a smooth manifold and the projection $E \to B$ is a smooth map~\cite{MW09}. For simplicity, one often assumes from the outset that $B$ is smooth, but at present there seems to be no situation in families gauge theory where the smoothness of the base $B$ is essentially needed. This is analogous to the theory of indices of families of linear elliptic operators~\cite{AS71}, where smoothness of the base is not required.
\end{rem}

Next, we fix additional data along $E$ to write down the Seiberg--Witten equations. Assume that $X$ is endowed with a $\spc$ structure $\fraks$, and that $E$ carries along each fiber a continuous family of copies of $\fraks$. Precisely, the structure group of $E$ reduces to the automorphism group of the $\spc$ 4-manifold $(X,\fraks)$. We say that we are given a smooth fiber bundle of $\spc$ 4-manifolds $(X,\fraks) \to E \to B$, and we use this notation%
\footnote{To be more precise: instead of defining spin and $\spc$ structures via the double cover of $SO(n)$, we use the double cover of the Lie group $GL^{+}(n,\R)$ of real $n\times n$ matrices with positive determinant. This avoids building a Riemannian metric into the definition, which is convenient since our structure group is all of $\Diff(X)$ rather than the isometry group for a fixed metric.}.
In addition, we choose a fiberwise Riemannian metric on $E$, i.e. a family of Riemannian metrics along the fibers varying continuously over $B$.

With this in hand, each fiber of $E$ carries a Seiberg--Witten equation depending continuously on the parameter in $B$. Using the compactness of $B$, we can carry out the finite-dimensional approximation of the Seiberg--Witten equations (as explained in Section~\ref{section: fin dim approx of Seiberg--Witten eq}) simultaneously over $B$. The outcome is finite-rank real vector bundles $V_{\R}\to B$, $W_{\R}\to B$, finite-rank complex vector bundles $V_{\C}\to B$, $W_{\C}\to B$, and a fiber-preserving continuous map between their Thom spaces
\begin{equation}
\label{eq: fin. dim. approx. family}
\begin{split}
\xymatrix{
f : \Th(V_{\R}\oplus V_{\C}) \ar@{->}[d] \ar[r] & \Th(W_{\R}\oplus W_{\C}) \ar@{->}[d] \\
B \ar@{=}[r] & B
}
\end{split}
\end{equation}
Exactly as in the unparametrized case of Section~\ref{section: fin dim approx of Seiberg--Witten eq}, there is a Lie group $G$ acting fiberwise on $V_{\R},W_{\R}$ and on $V_{\C},W_{\C}$. Concretely, $G=S^{1}$ for a general $\spc$ structure $\fraks$, while $G=\Pin(2)$ when $\fraks$ is a spin structure. For instance, when $G=S^{1}$ the action on $V_{\R},W_{\R}$ is trivial and the action on $V_{\C},W_{\C}$ is induced by scalar multiplication on the fibers. We let $G$ act trivially on the base $B$, so these are $G$-equivariant bundles over $B$, and the map $f$ between their Thom spaces is $G$-equivariant.

In Section~\ref{section: fin dim approx of Seiberg--Witten eq} we saw that the existence of a $G$-equivariant map $f$ between spheres yields, via Borsuk-Ulam type theorems, inequalities among characteristic numbers of the 4-manifold—recovering results such as Donaldson's diagonalization theorem and the $10/8$-inequality. The families version of this story says: from the existence of a $G$-equivariant map $f$ between Thom spaces of certain vector bundles naturally associated to the family, one obtains constraints on the characteristic classes of those bundles (often in combination with the unparametrized characteristic numbers).

As in Subsection~\ref{subsection BF inv}, one can absorb the choices in the finite-dimensional approximation by passing to stable homotopy, and thereby define a \emph{families Bauer--Furuta invariant}. In actual computations, however, one frequently needs to fix trivializations of the objects appearing in the approximation; when uniqueness of such trivializations fails, this becomes a serious obstruction. This issue does not occur in the unparametrized setting.

\subsection{Constraints obtained from the finite-dimensional approximation for families}
\label{section: fin dim approx family constraint}

Let us state a concrete constraint obtained from the finite-dimensional approximation for families. As an example, we take Baraglia's families version of Donaldson's diagonalization theorem (Theorem~\ref{thm: Baraglia constraint}), which underlies the proof of Theorem~\ref{theo Baraglia}. Keep $B$, $(X,\fraks)$, and the bundle $(X,\fraks)\to E\to B$ as in Subsection~\ref{subsection basic picture}: $B$ is a finite CW complex, $X$ is an oriented smooth closed 4-manifold, $\fraks$ is a $\spc$ structure on $X$, and $E$ is a smooth bundle of $\spc$ 4-manifolds $(X,\fraks)$ over $B$. Let $\fraks_{E}$ be a fiberwise $\spc$ structure on $E$ that restricts to $\fraks$ on each fiber.

The key elementary invariant of $E$ in gauge theory for families is the following vector bundle.
Associated to the bundle $X\to E\to B$ there is, uniquely up to isomorphism, a real vector bundle
\[
\R^{b^{+}(X)} \to H^{+}(E) \to B .
\]
This is independent of the $\spc$ structure $\fraks$ and depends only on the oriented \emph{topological} bundle structure of $E$, i.e. on its structure as a $\Homeo^{+}(X)$-bundle; we do not use any reduction to $\Diff^{+}(X)$. Intuitively, over each $b\in B$ the fiber encodes a maximal positive-definite subspace of $H^{2}(E_{b};\R)$ with respect to the intersection form. 
The precise definition of the vector bundle $H^{+}(E)$ is as follows.
Let $\Gr^{+}(H^{2}(X;\R))$ be the space of $b^{+}(X)$-dimensional subspaces of $H^{2}(X;\R)$ that are  positive-definite for the intersection form. This ``Grassmannian'' is contractible (see, e.g., \cite{LiLiu01}). The group $\Homeo^{+}(X)$ acts naturally on $\Gr^{+}(H^{2}(X;\R))$, hence the $\Homeo^{+}(X)$-bundle structure on $E$ induces a bundle
\[
\Gr^{+}(H^{2}(X;\R)) \to \Gr^{+}_{E} \to B
\]
with fiber $\Gr^{+}(H^{2}(X;\R))$.
Since the fiber $\Gr^{+}(H^{2}(X;\R))$ is contractible, the bundle $\Gr^{+}_{E}$ admits a section unique up to homotopy. Choosing such a section determines a vector bundle $\R^{b^{+}(X)} \to H^{+}(E) \to B$, and the uniqueness up to homotopy implies that its isomorphism class is canonically determined by $E$.

In general there is no canonical choice of $H^{+}(E)$. If the structure group of $E$ reduces to $\Diff^{+}(X)$, then by choosing a fiberwise family of Riemannian metrics one can assemble the spaces of self-dual harmonic 2-forms fiberwise into a vector bundle, which is one realization of $H^{+}(E)$. For applications to non-smoothable families one must work in the topological category, so it is important to have the definition using only the $\Homeo^{+}(X)$-bundle structure as above.

Next, using the reduction to $\Diff^{+}(X)$ and the fiberwise $\spc$ structure $\fraks_{E}$, we define another (virtual) bundle. Choose a fiberwise family of Riemannian metrics on $E$. Using $\fraks_{E}$, consider along the fibers the family of determinant line bundles and the bundle over $B$ whose fiber on $b \in B$ consists of all connections on the line bundle on $E_{b}$ (this fiber is contractible—indeed an infinite-dimensional affine space modeled on $\Omega^{1}(X)$). Choosing a section is the same as choosing a family of connections $\{A_{b}\}_{b\in B}$. We can then form the family of $\spc$ Dirac operators $\{\slashed{D}_{A_{b}}\}_{b\in B}$. Its index
\[
\ind \slashed{D}_{E} := \ind \{\slashed{D}_{A_{b}}\}_{b\in B} \in K(B)
\]
is independent of all choices, since the ambiguities are contractible.

Thus from $E$ we obtain the real vector bundle $H^{+}(E)$ and the complex (virtual) vector bundle $\ind \slashed{D}_{E}$. These correspond to the two linearized pieces appearing in the Seiberg--Witten equations: the real Atiyah-Hitchin-Singer operator and the complex Dirac operator. They may be viewed as the ``linearization'' of the bundle $E$. Using them, one can describe the bundles appearing in the families finite-dimensional approximation \eqref{eq: fin. dim. approx. family}: the real bundles $V_{\R},W_{\R}$ and the complex bundles $V_{\C},W_{\C}$ satisfy
\begin{equation}
\label{eq: virtual vect}
W_{\R}-V_{\R}=H^{+}(E)\ \text{ in } KO(B),\qquad
V_{\C}-W_{\C}=\ind \slashed{D}_{E}\ \text{ in } K(B).
\end{equation}
Although this description is less precise, it is convenient to regard the families approximation \eqref{eq: fin. dim. approx. family} as a map of the form
\begin{equation}
\label{eq: fin. dim. approx. family2}
\begin{split}
\xymatrix{
f:\ \Th(\ind \slashed{D}_{E}) \ar@{->}[d] \ar[r] & \Th(H^{+}(E)) \ar@{->}[d] \\
B \ar@{=}[r] & B.
}
\end{split}
\end{equation}
Precisely speaking, in general there is no guarantee that we can ``desuspend'' the families approximation into this form.  
Another caveat is that $\ind \slashed{D}_{E}$ is a virtual vector bundle.
A map of Thom spaces is obtained after sufficient suspensions.

We now use gauge theory to constrain the characteristic classes and numbers of $H^{+}(E)$ and $\ind \slashed{D}_{E}$. In current applications to detecting non-smoothable families, only the \emph{torsion} information of $H^{+}(E)$ has been used, while from $\ind \slashed{D}_{E}$ only its rank—that is, the ordinary $\spc$ Dirac index on $X$—has been used. By the index theorem,
\begin{equation}
\label{eq: rank index bundle}
\rank_{\C}(\ind \slashed{D}_{E})=\bigl(c_{1}(\fraks)^{2}-\sigma(X)\bigr)/8.
\end{equation}

The following theorem of Baraglia is a families analogue of Donaldson's diagonalization theorem and serves as the key input for Theorem~\ref{theo Baraglia}, which asserts that for  ``most'' simply connected, closed $4$-manifolds $X$, the inclusion $\Diff(X) \inc \Homeo(X)$ is not a weak homotopy equivalence.

\begin{thm}[Baraglia~\cite{Ba21}]
\label{thm: Baraglia constraint}
Let $B$ be a compact topological space, let $X$ be an oriented smooth closed $4$-manifold, and let $\fraks$ be a $\spc$ structure on $X$. Let $(X,\fraks)\to E\to B$ be a smooth fiber bundle of $\spc$ $4$-manifolds. If the top Stiefel--Whitney class of $H^{+}(E)$ is nonzero, i.e. $w_{b^{+}(X)}(H^{+}(E))\neq 0$, then
\[
c_{1}(\fraks)^{2}-\sigma(X)\le 0.
\]
\end{thm}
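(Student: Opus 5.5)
We argue by contradiction: assume $d:=\bigl(c_{1}(\fraks)^{2}-\sigma(X)\bigr)/8>0$, which by \eqref{eq: rank index bundle} is the rank of $\ind \slashed{D}_{E}$, and aim to show $w_{b^{+}(X)}(H^{+}(E))=0$.

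\textbf{Setup.} Starting from the families finite-dimensional approximation \eqref{eq: fin. dim. approx. family} with $G=S^{1}$, we get an $S^{1}$-equivariant fiberwise map
\[
f:\Th(V_{\R}\oplus V_{\C})\to\Th(W_{\R}\oplus W_{\C}).
\]
By \eqref{eq: virtual vect}, $W_{\R}-V_{\R}=H^{+}(E)$ and $V_{\C}-W_{\C}=\ind\slashed{D}_{E}$. After stabilizing we may take $V_{\R}=U$ and $W_{\R}=U\oplus H^{+}(E)$ for some real bundle $U$. The map $f$ restricts to the identity on the fiberwise point at infinity. On the $S^{1}$-fixed locus $\Th(V_{\R})\to\Th(W_{\R})$ it is the inclusion induced by the linear map $U\to U\oplus H^{+}(E)$; this holds because the real (Atiyah--Hitchin--Singer) part of the Seiberg--Witten map is linear on reducibles, up to homotopy.

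\textbf{Key step: a families Borsuk--Ulam argument in $H^{*}_{S^{1}}(-;\Z/2)$.} Work with $\Z/2$ coefficients and fiberwise $S^{1}$-equivariant cohomology over $B$, where $H^{*}_{S^{1}}(B)=H^{*}(B)[U_{2}]$.
\begin{enumerate}
\item Pull back the equivariant Thom class of $W_{\R}\oplus W_{\C}$ along $f$.
\item Compare it with the Thom class of $V_{\R}\oplus V_{\C}$. Since $f$ is equivariant, the pullback is a multiple $\alpha\cdot\tau_{V}$ with $\alpha\in H^{*}_{S^{1}}(B)$ of degree $2\rank_{\R}H^{+}(E)$\,$-$\,$2d$ shifted appropriately (the degree difference is $b^{+}-2d$).
\item Restrict to the fixed-point set and apply localization. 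There $f^{*}\tau_{W}|=e(W_{\R})e_{S^{1}}(W_{\C})$ and $\tau_{V}|=e(V_{\R})e_{S^{1}}(V_{\C})$. The equivariant Euler class of a complex bundle with scalar action is $U^{r}+(\text{lower terms involving Chern classes})$. This gives an identity in the localized ring $H^{*}(B)[U,U^{-1}]$:
\[
\alpha\cdot e_{S^{1}}(V_{\C})=w_{b^{+}}(H^{+}(E))\,e_{S^{1}}(W_{\C}).
\]
\item Compare leading terms in $U$. The right side has top $U$-power $U^{\rank W_{\C}}w_{b^{+}}(H^{+}(E))$. On the left, $e_{S^{1}}(V_{\C})$ has $U$-degree $\rank V_{\C}=\rank W_{\C}+d$ with leading coefficient $1$, and $\alpha$ is a polynomial in $U$ with nonnegative powers. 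Hence the left side either vanishes or has a term of $U$-degree at least $\rank W_{\C}+d>\rank W_{\C}$.
\end{enumerate}
By a cohomological-degree count, $\alpha$ would have to carry negative $U$-powers, which it cannot. Equivalently, multiplying out and using that $H^{*}(B)$ is finite-dimensional (nilpotent in positive degrees), one concludes $w_{b^{+}}(H^{+}(E))=0$, contradicting the hypothesis. So $d\le 0$.

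\textbf{Main obstacle.} The delicate step is making the localization comparison rigorous. One must first justify that the pullback of the Thom class is a multiple of $\tau_{V}$ over the ring $H^{*}_{S^{1}}(B)$; this follows from the equivariant Thom isomorphism, but it needs care because the bundles are only stably determined. One must then check that the fixed-point restriction is injective enough after inverting $U$. I would handle this by passing to the cofiber sequence of the pair (Thom space, fixed Thom space): the complement of the fixed locus has free $S^{1}$-action, so its equivariant cohomology is $U$-torsion, and bounded $U$-torsion since $B$ is finite. It then suffices to compare coefficients of high powers of $U$. Extracting exactly the top Stiefel--Whitney class, rather than a weaker statement about Euler classes, is the reason for using $\Z/2$ coefficients, which avoids orientability assumptions on $H^{+}(E)$.
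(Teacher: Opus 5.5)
Your strategy is the paper's. You push the mod $2$ equivariant Thom class of $W_{\R}\oplus W_{\C}$ around the $S^{1}$-fixed-point square, obtain $\alpha\,e_{S^{1}}(V_{\C})=w_{b^{+}}(H^{+}(E))\,e_{S^{1}}(W_{\C})$, and compare top powers of $U$ using that $e_{S^{1}}(V_{\C})$ is monic. Your step~4 is correct. However, step~3 as written does not produce this identity.

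The classes $e(W_{\R})e_{S^{1}}(W_{\C})$ and $e(V_{\R})e_{S^{1}}(V_{\C})$ are the restrictions of $\tau_{W}$ and $\tau_{V}$ to the zero section $B$. Since $f$ carries the zero section of $V$ to that of $W$, what you actually get is
\[
e(V_{\R})\,\alpha\,e_{S^{1}}(V_{\C}) \;=\; e(V_{\R})\,w_{b^{+}}(H^{+}(E))\,e_{S^{1}}(W_{\C}).
\]
The factor $e(V_{\R})=w_{\mathrm{top}}(V_{\R})$ cannot be cancelled. For $\mathrm{rank}\,V_{\R}>0$ it is a positive-degree class on a finite complex, hence nilpotent. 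It is even $0$ if $V_{\R}$ is trivial, which stabilization allows. Inverting $U$ does nothing to it.

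The fix is to restrict only to the fixed Thom space $\Th(V_{\R})$ and keep Thom classes:
\begin{itemize}
\item $\tau_{V_{\R}\oplus V_{\C}}|_{\Th(V_{\R})}=e_{S^{1}}(V_{\C})\,\tau_{V_{\R}}$;
\item $\tau_{W_{\R}\oplus W_{\C}}|_{\Th(W_{\R})}=e_{S^{1}}(W_{\C})\,\tau_{W_{\R}}$;
\item $(f^{S^{1}})^{*}\tau_{W_{\R}}=w_{b^{+}}(H^{+}(E))\,\tau_{V_{\R}}$, because $f^{S^{1}}$ is the inclusion $V_{\R}\inc V_{\R}\oplus H^{+}(E)$.
\end{itemize}
Commutativity of the square then gives $\alpha\,e_{S^{1}}(V_{\C})\,\tau_{V_{\R}}=w_{b^{+}}(H^{+}(E))\,e_{S^{1}}(W_{\C})\,\tau_{V_{\R}}$. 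The Thom isomorphism for $V_{\R}$ (freeness on $\tau_{V_{\R}}$) cancels $\tau_{V_{\R}}$ directly in $H^{*}_{S^{1}}(B)=H^{*}(B)[U]$.

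So the obstacle you single out is not the real one. You need neither injectivity of the fixed-point restriction nor localization: you are only computing one class in the lower-left corner of a commutative square in two ways. The genuine point is to cancel the real directions through the Thom class of $V_{\R}$, not its Euler class.

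Separately, your claim that $f^{S^{1}}$ is a linear injection with cokernel $H^{+}(E)$ uses $b_{1}(X)=0$. The paper's sketch also assumes this. For $b_{1}>0$ the reducible part has kernel $H^{1}(X)$, and one must work over the Picard torus instead.
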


\begin{rem}
\label{rem: local system}
As noted in \cite{Ba21}, the hypothesis $w_{b^{+}(X)}(H^{+}(E))\neq 0$ can be weakened to the nonvanishing of the Euler class with coefficients in a local system:
$e(H^{+}(E))\neq 0 \text{ in } H^{b^{+}(X)}(B;\Z_{w})$,
where $\Z_{w}$ is the local system on $B$ with fiber $\Z$ determined by $w=w_{1}(H^{+}(E))$.
\end{rem}

Before sketching the proof of Theorem~\ref{thm: Baraglia constraint}, let us explain why it may be viewed as a families version of Donaldson's diagonalization theorem. If $B=\{\mathrm{pt}\}$ and $b^{+}(X)=0$, the hypothesis is automatic, and Theorem~\ref{thm: Baraglia constraint} yields $c_{1}(\fraks)^{2}-\sigma(X)\le 0$ for \emph{every} $\spc$ structure $\fraks$ on $X$. Combined with Elkies' characterization of diagonalizable lattices~\cite{Elk95}, this implies that the intersection form of $X$ is diagonalizable over $\Z$. (All known derivations of Donaldson's theorem from Seiberg--Witten theory follow this pattern, relying on Elkies' result.)

\begin{proof}[Sketch of proof of Theorem~\ref{thm: Baraglia constraint}]
We sketch the argument in the case $b_{1}(X)=0$.
Consider the families finite-dimensional approximation \eqref{eq: fin. dim. approx. family2}, more precisely \eqref{eq: fin. dim. approx. family}. The map $f$ is $S^{1}$-equivariant. Taking $S^{1}$-fixed points in \eqref{eq: fin. dim. approx. family} yields a commutative diagram
\begin{equation}
\label{eq: fundamental diagram}
\begin{split}
\xymatrix{
\Th(V_{\R}\oplus V_{\C}) \ar[r]^{f} & \Th(W_{\R}\oplus W_{\C}) \\
\Th(V_{\R}) \ar[r]^{f^{S^{1}}} \ar@{->}[u] \ar@{->}[r] & \Th(W_{\R}) \ar@{->}[u]
}
\end{split}
\end{equation}
whose vertical maps are inclusions. Applying $H^{*}_{S^{1}}(-;\Z/2)$ gives a commutative diagram
\begin{equation}
\label{eq: fundamental diagram coh}
\begin{split}
\xymatrix{
H^{*}_{S^{1}}(\Th(V_{\R}\oplus V_{\C})) \ar@{->}[d] &
H^{*}_{S^{1}}(\Th(W_{\R}\oplus W_{\C})) \ar@{->}[d] \ar[l]_{f^{*}} \\
H^{*}_{S^{1}}(\Th(V_{\R})) &
H^{*}_{S^{1}}(\Th(W_{\R})) \ar[l]_{(f^{S^{1}})^{*}}
}
\end{split}
\end{equation}
(From now to the end of this subsection, cohomology is taken with $\Z/2$ coefficients.) Start with the $S^{1}$-equivariant Thom class
\[
\tau_{S^{1}}(W_{\R}\oplus W_{\C}) \in H^{*}_{S^{1}}(\Th(W_{\R}\oplus W_{\C}))
\]
in the upper right corner, and map it to the lower left corner along the two paths; commutativity gives an equality. Using (equivariant, $\Z/2$-coefficient) Thom isomorphisms repeatedly, one finds an element $\alpha\in H^{*}_{S^{1}}(B)$ such that
\begin{equation}
\label{eq: divisibility}
\alpha\, e_{S^{1}}(V_{\C}) = e_{S^{1}}(H^{+}(E))\, e_{S^{1}}(W_{\C}) \quad \text{in } H^{*}_{S^{1}}(B),
\end{equation}
where $e_{S^{1}}(-)$ denotes the $S^{1}$-equivariant Euler class with $\Z/2$ coefficients (here $\alpha$ is the degree of $f$ in the cohomology theory $H^{*}_{S^{1}}(-;\Z/2)$). The appearance of $H^{+}(E)$ in \eqref{eq: divisibility} comes from the fact that $f^{S^{1}}$ is induced fiberwise by an injective linear bundle map $f^{S^{1}}:V_{\R}\to W_{\R}$ and, by Hodge theory,
\[
\Coker\bigl(f^{S^{1}}:V_{\R}\to W_{\R}\bigr) \cong H^{+}(E).
\]
Since $S^{1}$ acts trivially on $B$ and on $H^{+}(E)$, we have
\[
e_{S^{1}}(H^{+}(E))= w_{b^{+}(X)}(H^{+}(E))\otimes 1 \in H^{*}(B)\otimes H^{*}_{S^{1}}(\mathrm{pt}) = H^{*}_{S^{1}}(B).
\]
Hence if $w_{b^{+}(X)}(H^{+}(E))\neq 0$, then $e_{S^{1}}(H^{+}(E))\neq 0$. 

Here recall that $H^{*}_{S^{1}}(\mathrm{pt})$ is a polynomial ring in one variable $U$.
In general, for a complex bundle $V\to B$ with the standard $S^{1}$-action by complex scalars on the fibers, the class $e_{S^{1}}(V)$ is a polynomial in $U$ with coefficients the Chern classes of $V$.
If one works with $\mathbb{Z}/2$-coefficients, Chern classes are replaced by Stiefel--Whitney classes.
Substituting this description for $e_{S^{1}}(V_{\C})$ and $e_{S^{1}}(W_{\C})$ into \eqref{eq: divisibility} and comparing top $U$-degrees using $e_{S^{1}}(H^{+}(E))\neq 0$, we obtain
\[
\rank_{\C} V_{\C} - \rank_{\C} W_{\C} \le 0 .
\]
Combining this with \eqref{eq: virtual vect} and \eqref{eq: rank index bundle} gives the desired inequality $c_{1}(\fraks)^{2}-\sigma(X)\le 0$.
\end{proof}

\begin{rem}
\label{rem: Baraglia constraint other cohomologies}
The preceding argument is a families version of Bauer-Furuta's derivation of Donaldson's diagonalization theorem from the finite-dimensional approximation~\cite{BF04}. When $(X,\fraks)$ is spin, the same reasoning goes through with $H^{*}_{S^{1}}(-;\Z/2)$ replaced by $H^{*}_{\Pin(2)}(-;\Z/2)$ or by $K_{\Pin(2)}$, yielding families analogues of Donaldson's Theorems B, C~\cite{Do86} and of Furuta's $10/8$-inequality~\cite{Fu01}; see \cite{Ba21}. (Using $H^{*}_{\Pin(2)}(-;\Z/2)$ improves the range of $n$ for which $\pi_{n}(\Diff(X))\to \pi_{n}(\Homeo(X))$ fails to be an isomorphism; cf. Remark~\ref{rem: Baraglia spin}.) In the $K_{\Pin(2)}$-setup one needs a $K$-theoretic orientation (a $\spc$ structure) on the bundle $H^{+}(E)\to B$ compatible with the $\Pin(2)$-action in order to apply Thom isomorphisms; this is a genuine obstruction to applications. Similarly, if one works with ordinary cohomology with $\Z$-coefficients, one needs an orientation on $H^{+}(E)$; this can be avoided either by working mod~2 as above or by using local coefficients as in Remark~\ref{rem: local system}. Existing applications only require the mod~2 argument, but it is an interesting problem whether using local systems yields further results.
\end{rem}

In contrast, the next theorem has no direct analogue in the unparametrized gauge theory; its proof uses an argument absent from the classical (nonfamilies) setting.

\begin{thm}[Baraglia--K.~\cite{BK19}]
\label{thm: BK Steenrod}
Let $B$ be a compact topological space and $X$ an oriented smooth closed $4$-manifold with $b_{1}(X)=0$ and $b^{+}(X)\equiv 3 \pmod 4$. Let $\fraks$ be a $\spc$ structure on $X$ with odd Seiberg--Witten invariant $SW(X,\fraks)$. Then for any smooth bundle $(X,\fraks)\to E\to B$ of $\spc$ 4-manifolds one has
\[
c_{1}(\ind \slashed{D}_{E}) \;=\; w_{2}(H^{+}(E)) \quad \text{in } H^{2}(B;\Z/2).
\]
In particular, for $X=K3$ with its standard spin structure, $w_{2}(H^{+}(E))=0$.
\end{thm}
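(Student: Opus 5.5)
The plan is to run the families finite-dimensional approximation \eqref{eq: fin. dim. approx. family} and exploit that $f^{*}$ commutes with Steenrod squares. This is the step with no unparametrized analogue. Since both sides of the claimed identity live in $H^{2}(B;\Z/2)$, and a class in $H^{2}$ is detected by maps of closed surfaces into $B$, I would first pull everything back and assume $B$ is a closed surface. Then the only relevant characteristic classes are $w_1, w_2$ of the real bundles and $c_1$ (mod $2$) of the complex ones. In this case $w_2(V_{\C})=c_1(V_{\C}) \bmod 2$.

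\textbf{Step 1: the degree class.} As in the sketch of proof of Theorem~\ref{thm: Baraglia constraint}, pulling back the $S^{1}$-equivariant mod $2$ Thom class of $W=W_{\R}\oplus W_{\C}$ gives
\[
f^{*}\tau_{S^{1}}(W)=\alpha\,\tau_{S^{1}}(V)
\]
for some $\alpha\in H^{*}_{S^{1}}(B)=H^{*}(B)[U]$. I want to show that the coefficient of $\alpha$ in the relevant $U$-degree, restricted to a point of $B$, is $SW(X,\fraks) \bmod 2$. This is the families statement that the Bauer--Furuta degree recovers the Seiberg--Witten invariant (cf.\ \cite{BK19} and Subsection~\ref{subsection BF inv}). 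Concretely, I would work in the localization $H^{*}_{S^{1}}(B)[U^{-1}]$, or equivalently on the $S^{1}$-quotient of the complement of the fixed locus, where the zero set is the parametrized moduli space. The condition $b_1=0$ is used here to keep $V_{\R}$, $W_{\R}$ and the reducible locus simple. Writing $\alpha=\alpha_0U^{k}+\alpha_1U^{k-1}+\cdots$ with $\alpha_i\in H^{2i}(B)$, I expect $\alpha_0=SW(X,\fraks)=1 \bmod 2$.

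\textbf{Step 2: apply $Sq^{2}$.} Using $Sq^{2}\tau(W)=w_2(W)\tau(W)$ for equivariant Thom classes, together with the Cartan formula, comparing $Sq^{2}$ of both sides of the relation in Step~1 gives
\[
w_2^{S^1}(W)\,\alpha=Sq^{2}\alpha+Sq^{1}\alpha\, w_1^{S^1}(V)+\alpha\, w_2^{S^1}(V).
\]
The terms to evaluate are as follows.
\begin{itemize}
\item The equivariant class $w^{S^1}(V_{\C})$ is computed from the total Chern class of $V_{\C}\otimes$(scalar action), so $w_2^{S^1}(V_{\C})=c_1(V_{\C})+(\rank V_{\C})\,U$, and similarly for $W_{\C}$.
\item $Sq^{2}U^{k}=k\,U^{k+1}$ and $Sq^{1}U^k=0$.
\item $w_2(W)-w_2(V)$ is expressed, via \eqref{eq: virtual vect}, through $w_2(H^{+}(E))$, $c_1(\ind\slashed{D}_{E})$, and the $w_1^2$ terms.
\end{itemize}
Extracting the coefficient of $U^{k+1}$ with $H^{2}(B)$ coefficient then yields
\[
\alpha_0\bigl(c_1(\ind\slashed{D}_E)+w_2(H^{+}(E))+\text{correction}\bigr)=0 .
\]
The ``correction'' consists of $\Z/2$ multiples of $\alpha_1$ and of $w_1(H^+(E))^2$, with coefficients given by binomial-type parities involving $k$, $\rank_{\C}\ind$ and $b^{+}$.

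\textbf{Step 3 (main obstacle): the parity bookkeeping.} The hard step is showing that the hypothesis $b^{+}\equiv 3 \pmod 4$ makes the correction terms vanish. Here $k$ is determined by the moduli dimension $d=2\,\rank_{\C}\ind-b^{+}-1$. I would first carry out the computation in the case $d=0$, which is the case of $X=K3$. There I would verify that the coefficients multiplying $\alpha_1$ and $w_1^{2}$ are even exactly when $b^{+}+1\equiv 0 \pmod 4$. If the $w_1^2$ term does not cancel directly, I would try $Sq^{1}$ relations or the identity $w_1(H^{+}(E))=0$, which holds because $E$ is an oriented bundle with $b^{+}$ odd; otherwise I would pass to the local-coefficient Euler class as in Remark~\ref{rem: local system}. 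With $\alpha_0=1$, Step~2 then gives $c_1(\ind\slashed{D}_E)=w_2(H^{+}(E))$. For $K3$ with its spin structure, $\ind\slashed{D}_E$ is quaternionic, so its $c_1$ vanishes mod $2$, and the theorem yields $w_2(H^{+}(E))=0$.
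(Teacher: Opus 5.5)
Your overall strategy matches what the paper's one-line description refers to: reduce to a surface, use that $f^{*}$ commutes with Steenrod squares, and let the parity of $b^{+}$ kill an unwanted term. But Step~1 fails, and Steps~2 and~3 fall with it.

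\textbf{The degree class vanishes.} In $f^{*}\tau_{S^{1}}(W)=\alpha\,\tau_{S^{1}}(V)$, the class $\alpha$ has degree $\mathrm{rk}_{\R}W-\mathrm{rk}_{\R}V=b^{+}(X)-2\rank_{\C}\ind\slashed{D}_{E}=-(d+1)$, where $d$ is the formal dimension. Whenever $SW(X,\fraks)$ can be nonzero ($d\ge 0$) this degree is negative, so $\alpha=0$ in $H^{*}_{S^{1}}(B)$. For $K3$ the degree is $-1$, so there is no expansion $\alpha_0U^{k}+\cdots$ at all.

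Inverting $U$ does not help. By the localization theorem the localized class is determined by the fixed-point map, $\alpha\,e_{S^{1}}(V_{\C})=e_{S^{1}}(H^{+}(E))\,e_{S^{1}}(W_{\C})$. So it is a multiple of $w_{b^{+}}(H^{+}(E))$, which is $0$ over a surface. Localization sees only the fixed locus, while the moduli space lives in the free part; it is the opposite of working on the quotient of the complement, not equivalent to it. Applying $Sq^{2}$ to your relation therefore gives $0=0$, and nothing in your setup ever produces $SW(X,\fraks)$.

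\textbf{What is missing: a chamber and the relative class.} Over a $2$-dimensional base with $b^{+}\ge 3$, $H^{+}(E)$ has a nowhere-zero section $\phi$, and the fixed-point map $V_{\R}\to W_{\R}=V_{\R}\oplus H^{+}(E)$ misses $\phi(B)$. Pulling back the Thom class of $W$ centred at $\phi$ gives a class in $H^{*}_{S^{1}}(\Th(V),\Th(V_{\R}))\cong H^{*-\mathrm{rk}V_{\R}-1}(\mathbb{P}(V_{\C}))$, fibrewise dual to the parametrized moduli space. Let $x=c_{1}(\mathcal{O}(1))$ and $\pi\colon\mathbb{P}(V_{\C})\to B$. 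The pushforwards $SW_{j}=\pi_{*}(x^{j}\,\cdot\,)$ of this class are the families invariants, with $SW_{d/2}=SW(X,\fraks)$.

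The Steenrod-square computation must be run on this class, using:
\begin{itemize}
\item $Sq\,\tau=w(W)\tau$, where $w^{S^{1}}(W_{\C})$ becomes $w(W_{\C}\otimes\mathcal{O}(1))$;
\item the Gysin formula $Sq(\pi_{*}z)=\pi_{*}\bigl(Sq(z)\,w(T_{\pi})^{-1}\bigr)$ with $T_{\pi}=\pi^{*}V_{\C}\otimes\mathcal{O}(1)-\underline{\C}$, which is the key extra input.
\end{itemize}
Over a surface this yields
\[
Sq^{2}SW_{m}=(m-\delta)\,SW_{m+1}+\bigl(c_{1}(\ind\slashed{D}_{E})+w_{2}(H^{+}(E))\bigr)\,SW_{m},\qquad \delta=\rank_{\C}\ind\slashed{D}_{E}.
\]
At $m=d/2$ the left side is $Sq^{2}$ of a degree-$0$ class, hence $0$. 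The coefficient $m-\delta=-(b^{+}+1)/2$ is even exactly when $b^{+}\equiv 3\pmod 4$. So the hypothesis serves to kill the chamber-dependent, uncontrolled invariant $SW_{d/2+1}$, not to cancel $\alpha_{1}$ and $w_{1}^{2}$ terms. No $w_{1}(H^{+}(E))$ term appears, because every $SW_{j}$ has even degree.

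Separately, your claim that $w_{1}(H^{+}(E))=0$ because $E$ is oriented and $b^{+}$ is odd is false in general. Under the present hypotheses it does hold, for a different reason: a monodromy reversing the homology orientation would force $SW(X,\fraks)=-SW(X,\fraks)$, contradicting oddness.
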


The proof of Theorem~\ref{thm: BK Steenrod} applies Steenrod square operations to the families finite-dimensional approximation of the Seiberg--Witten equations to extract the stated constraint. This theorem is used in the proof of Theorem~\ref{theo: BK K3} comparing $\Diff(K3)$ and $\Homeo(K3)$.

\subsection{Constraints for families of 4-manifolds with boundary}
\label{subsection KT constraint}

Baraglia's families diagonalization theorem (Theorem~\ref{thm: Baraglia constraint}) extends to families of 4-manifolds with boundary. Recall that the Fr{\o}yshov invariant $\delta(Y,\frakt)$ is a numerical gauge-theoretic invariant associated to an oriented $\spc$ rational homology 3-sphere $(Y,\frakt)$. It can be defined via monopole Floer homology~\cite{KM07}, but also via the Seiberg--Witten Floer stable homotopy type or via Heegaard Floer homology (see Remark~\ref{rem: Froyshov relation correction term}); the Heegaard Floer definition allows for combinatorial computations for many 3-manifolds. Fr{\o}yshov introduced this invariant in his series of works beginning around 1996~\cite{Fr96,Fr02,Fr10} to extend Donaldson's diagonalization theorem to 4-manifolds with boundary (the case recovered below by taking $B=\{\rm pt\}$ and $X$ negative-definite). In the proof of Donaldson's theorem, reducible solutions (singular points of the moduli space) play a key role; heuristically, $\delta(Y)$ measures, via solutions to the 4-dimensional Seiberg--Witten equations on $Y\times \R$, how much flow there is from irreducible to reducible solutions on $Y$.

We now state the extension to families with boundary, which underlies Theorem~\ref{theo KT appl} on comparing diffeomorphism and homeomorphism groups for 4-manifolds with boundary.

\begin{thm}[K.--Taniguchi~\cite{KT20}]
\label{theo KT}
Let $X$ be an oriented smooth compact $4$-manifold with $b_{1}(X)=0$ and connected boundary $\partial X=Y$, a rational homology $3$-sphere. Let $\fraks$ be a spin$^{c}$ structure on $X$. Let $(X,\fraks)\to E\to B$ be a smooth bundle of $\spc$ $4$-manifolds over $B$ such that the restriction to the boundary is the trivial bundle of spin$^{c}$ $3$-manifolds. If $w_{b^{+}(X)}(H^{+}(E))\neq 0$, then
\begin{align*}
\bigl(c_{1}(\fraks)^{2}-\sigma(X)\bigr)/8 \;\le\; \delta(Y,\fraks|_{Y}).
\end{align*}
\end{thm}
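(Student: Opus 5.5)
The plan is to run the proof of Theorem~\ref{thm: Baraglia constraint} in relative form. Manolescu's Seiberg--Witten Floer stable homotopy type $SWF(Y,\fraks|_{Y})$ (Section~\ref{subsection Seiberg--Witten Floer stable homotopy type}) replaces the target sphere. The first step is a families relative Bauer--Furuta map. The boundary family is trivial, so one can fix the metric, the spin$^{c}$ structure and the Coulomb slice data on the collar once and for all. Over $B$, one then takes a fiberwise metric on $E$ that is a product near $\partial$, applies finite-dimensional approximation fiberwise, and uses Conley index theory on the boundary uniformly in $b\in B$. Since the boundary data are constant, this should give an $S^{1}$-equivariant fiber-preserving map
\[
f:\ \Th(V_{\R}\oplus V_{\C}) \to \Th(W_{\R}\oplus W_{\C})\wedge I,
\]
where $I$ is a fixed $S^{1}$-equivariant Conley index representing $SWF(Y)$ up to suspension, and the smash product is taken fiberwise with the trivial bundle $B\times I$. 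As in \eqref{eq: virtual vect}, we have $W_{\R}-V_{\R}=H^{+}(E)$ and $V_{\C}-W_{\C}=\ind\slashed{D}_{E}$, where the Dirac index is now taken with APS boundary conditions. Its rank is $(c_{1}(\fraks)^{2}-\sigma(X))/8$ plus the grading shift $n(Y,\fraks|_Y,g)$ that enters the normalization of $SWF$.

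Next, take $S^{1}$-fixed points to get the analogue of \eqref{eq: fundamental diagram}. Because $b_{1}(X)=0$ and $Y$ is a rational homology sphere, the fixed-point map $\Th(V_{\R})\to\Th(W_{\R})\wedge I^{S^{1}}$ is, up to homotopy, induced by a fiberwise injective linear map with cokernel $H^{+}(E)$, smashed with the inclusion of $I^{S^{1}}$, which is a sphere. Then apply $\tilde H^{*}_{S^{1}}(-;\Z/2)$ and chase a class along both paths of the commutative square. The key input on the boundary side is the description of $\delta$ through $I$: the map $\tilde H^{*}_{S^{1}}(I)\to \tilde H^{*}_{S^{1}}(I^{S^{1}})\cong \Z/2[U]$ has image $U^{d}\Z/2[U]$ for large degrees, where $d$ is determined by $\delta(Y,\fraks|_Y)$ together with the level of $I^{S^{1}}$. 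The first class to chase is the product of the Thom class of $W_{\R}\oplus W_{\C}$ with a preimage of $U^{d}$ times the fixed-point generator.

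Chasing this class and using the Thom isomorphisms as in \eqref{eq: divisibility} should give a relation in $H^{*}_{S^{1}}(B)=H^{*}(B)\otimes\Z/2[U]$ of the form
\[
\alpha\, e_{S^{1}}(V_{\C}) = e_{S^{1}}(H^{+}(E))\, e_{S^{1}}(W_{\C})\,U^{d}.
\]
Since $e_{S^{1}}(H^{+}(E))=w_{b^{+}}(H^{+}(E))\otimes 1\neq 0$, comparing top $U$-degrees gives $\rank V_{\C}-\rank W_{\C}\le d$. Substituting the index formula and the normalization cancels the $n(Y)$ shift, which leaves $(c_{1}(\fraks)^{2}-\sigma(X))/8\le\delta(Y,\fraks|_Y)$.

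I expect the main obstacle to be the first step: constructing the families relative finite-dimensional approximation with a single fixed Conley index over all of $B$. This needs uniform a priori bounds and an isolating neighborhood valid for the whole family, plus compatibility of the spectral cutoffs with the boundary. My first attempt would exploit triviality on the boundary to use a single index pair for $Y$. I would then run Manolescu's relative construction with $B$ as an extra compact parameter, checking that the homotopy type of the approximation is independent of choices fiberwise. A secondary delicate point is pinning down the degree convention that turns the $U$-torsion-free tower of $SWF(Y)$ into exactly $\delta$, so that the inequality comes out with no additive error.
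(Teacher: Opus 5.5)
Your proposal is correct and follows essentially the argument the paper has in mind. You build a families relative Bauer--Furuta map into (a suspension of) Manolescu's Seiberg--Witten Floer stable homotopy type, using the triviality of the boundary family, and then run the relative version of the $S^{1}$-fixed-point, mod~$2$ Borel cohomology chase from the proof of Theorem~\ref{thm: Baraglia constraint}, with $\delta(Y,\fraks|_{Y})$ entering through the $U$-tower of the Conley index. The two points you flag as delicate (a single isolating neighborhood uniformly over $B$, and the grading bookkeeping among $n(Y,\fraks|_Y,g)$, $\dim V^{0}_{\lambda}$ and $\delta$) are exactly where the technical work in \cite{KT20} lies.
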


Taking $B=\{\rm pt\}$ and $X$ negative-definite recovers Fr{\o}yshov's result~\cite{Fr96,Fr10}; taking $Y=S^{3}$ recovers Baraglia's Theorem~\ref{thm: Baraglia constraint}.

The proof of Theorem~\ref{theo KT} performs the families finite-dimensional approximation of the Seiberg--Witten equations in the presence of boundary. As a receptacle for this approximation we use Manolescu's Seiberg--Witten Floer stable homotopy type described in Subsection~\ref{subsection Seiberg--Witten Floer stable homotopy type}~\cite{Man03}. For families of spin 4-manifolds one can also work with $\mathrm{Pin}(2)$-equivariant cohomology to obtain sharper statements (see Remark~\ref{rem: Baraglia constraint other cohomologies}); in that case the Fr{\o}yshov invariant $\delta$ is replaced by Manolescu's invariants $\alpha,\beta,\gamma$~\cite{Man16}, among which $\beta$ is the one used in the disproof of the Triangulation Conjecture.

\begin{rem}
\label{rem: Froyshov relation correction term}
Since invariants equivalent to Fr{\o}yshov's appear in several Floer theories, we record their relationships. The Fr{\o}yshov invariant $h$ defined in monopole Floer homology~\cite{KM07}, the invariant $\delta$ defined from the Seiberg--Witten Floer stable homotopy type~\cite{Man16}, and the correction term $d$ in Heegaard Floer theory~\cite{OS03a} are all equivalent, related by
\[
\delta(Y,\frakt) \;=\; -\,h(Y,\frakt) \;=\; d(Y,\frakt)/2 .
\]
The equivalence between $\delta$ and $h$ is \cite[Corollary~1.3]{LM18}; the relationship between $h$ and $d$ is discussed, for example, in \cite[Remark~1.1]{LRS18}.
\end{rem}

\section{Other topics}
\label{section : whatelse}

We briefly touch on several other aspects of families gauge theory that we have not been able to discuss so far.

\subsection{$\Symp$ vs.\ $\Diff$}
\label{section: Diff vs. Symp}

Kronheimer~\cite{Kropre} compares the symplectomorphism group $\Symp(X,\omega)$ and the diffeomorphism group $\Diff(X)$ for a symplectic 4-manifold $(X,\omega)$ using families gauge theory. The basic tool is Taubes's result~\cite{Tau95,Tau96} that, for $\spc$ structures satisfying a certain relation with the cohomology class of the symplectic form, solutions to a suitably perturbed Seiberg--Witten equation (perturbed by the symplectic structure) vanish. Using this vanishing property, one constructs a cohomological invariant on the space of symplectic forms isotopic to $\omega$ by applying families Seiberg--Witten theory. The existence of a nontrivial homotopy class in the space of symplectic forms then reflects a difference between $\Symp(X,\omega)$ and $\Diff(X)$. To detect such nontrivial homotopy classes, one checks the nontriviality of the cohomological invariant by explicit computations for families arising from resolutions of singularities of algebraic surfaces. In the K\"{a}hler case, this reduces to the fact that solutions to the perturbed Seiberg--Witten equations correspond to algebraic curves.

Recently, Smirnov~\cite{Smi20,Smi202,Smi21,Smi23} has rapidly developed Kronheimer's approach.

\begin{rem}[Addendum after 2021]
Other related work by Lin~\cite{lin2022family} and Mu{\~n}oz{-}Ech{\'a}niz~\cite{munozechaniz2025configurationslagrangianspheresk3} has also appeared after 2021.
\end{rem}

\subsection{Families of Riemannian metrics}
\label{section: family of metrics}

Thus far we have considered families of 4-manifolds. There are also applications obtained by considering the \emph{trivial} family (a product bundle) and then studying families of Riemannian metrics with geometric origin on it. In that case, one obtains statements about the 4-manifold itself, rather than about bundles of 4-manifolds.

A basic example is Kronheimer--Mrowka's solution of the Thom conjecture~\cite{KM94}. This is a classical problem that bounds the genus of a smoothly embedded surface in $\CP^{2}$. The proof considers a 1-parameter family of metrics and exploits ``wall-crossing''. By stretching a neighborhood of an embedded surface, one obtains a 1-parameter family of metrics. The key point in the proof is that for a metric obtained by stretching a neighborhood of a surface whose genus is sufficiently small (more precisely, small enough to violate the adjunction inequality), the (unperturbed) Seiberg--Witten equations have no solutions.

Generalizing this idea, when several surfaces are embedded in a 4-manifold, one stretches their neighborhoods as independently as possible to obtain a higher-dimensional family of metrics. Studying the associated family of Seiberg--Witten equations yields constraints on configurations of surfaces (the author~\cite{K16,K17}). A construction of such families of metrics also appears in Fr{\o}yshov's work~\cite{Fr04} in the context of families of anti-self-dual equations. As a combination of diffeomorphisms with adjunction-type arguments, see also Baraglia~\cite{B202}.

An another instance, in the paper on the exact triangle by Kronheimer--Mrowka--Ozsv\'{a}th--Szab\'{o}~\cite{KMOS07} (and in F.~Lin's $\Pin(2)$-monopole version~\cite{FLin17}) one considers a 2-parameter family of metrics in a closely related spirit. While a 1-parameter family is typically what is needed (for invariance) in the construction of ordinary Floer homologies, taking 2-parameter families of geometrically derived metrics leads to computational formulas for Floer homology. Higher-parameter generalizations include the $A_{\infty}$-module structures in F.~Lin~\cite{FLin17a} and spectral sequences of Bloom~\cite{Blo11} and of Kronheimer--Mrowka~\cite{KM11}.

\subsection{Families of positive scalar curvature metrics}
\label{section: PSC}

The question of whether a given smooth manifold $X$ admits a (everywhere) positive scalar curvature metric is a classic problem in Riemannian geometry (see, for example, \cite{KW75} for its significance). A families version asks: assuming $X$ admits positive scalar curvature, what can be said about the homotopy groups of the space $\PSC(X)$ of positive scalar curvature metrics on $X$? In high dimensions this has been studied in considerable detail using surgery theory and index theory (e.g.\ \cite{BER-W17}), but in dimension four such surgery arguments do not work as well.

In dimension four, however, families Seiberg--Witten theory is effective. This is an extension to families of the fact that Seiberg--Witten equations have no solutions for positive scalar curvature metrics~\cite{W94}.
Ruberman~\cite{Rub01} first showed that there exists a 4-manifold $X$ with $\PSC(X)\neq\emptyset$ but $\pi_{0}(\PSC(X))\neq 0$ (concretely, $X$ is the form of $X=m\CP^{2}\# n\overline{\CP}^2$). This uses invariants of diffeomorphisms constructed from 1-parameter families of Seiberg--Witten equations. In \cite{K19}, using 2-parameter families of Seiberg--Witten equations, the author constructs nontrivial invariants for commuting pairs of diffeomorphisms and shows that $\PSC(X)$ can fail to be contractible even beyond the range accessible by Ruberman's method. (This invariant can be viewed as a special case of the invariants of \cite{LiLiu01,K21} explained in Section~\ref{section: how to consider families gauge theory}.)

\begin{rem}[Addendum after 2021]
Auckly and Ruberman \cite{auckly2025familiesdiffeomorphismsembeddingspositive} generalized Ruberman's result on $\pi_{0}(\PSC(X))$ to higher homotopy groups.
\end{rem}

As a somewhat different application, Baraglia and the author~\cite{BK20} show that families Seiberg--Witten theory provides obstructions to the existence of positive scalar curvature metrics \emph{invariant} under a given group action.

\bibliographystyle{alpha}
\bibliography{mainref}

\newcommand{\etalchar}[1]{$^{#1}$}
\begin{thebibliography}{KLMME24}

\bibitem[AB21]{AB21}
Mohammed Abouzaid and Andrew~J. Blumberg.
\newblock Arnold conjecture and {M}orava {K}-theory.
\newblock {\em arXiv:2103.01507}, 2021.

\bibitem[Akb15]{Akb14}
Selman Akbulut.
\newblock Isotoping 2-spheres in 4-manifolds.
\newblock In {\em Proceedings of the {G}\"{o}kova {G}eometry-{T}opology
  {C}onference 2014}, pages 264--266. G\"{o}kova Geometry/Topology Conference
  (GGT), G\"{o}kova, 2015.

\bibitem[AKM{\etalchar{+}}19]{AKMR}
Dave Auckly, Hee~Jung Kim, Paul Melvin, Daniel Ruberman, and Hannah Schwartz.
\newblock Isotopy of surfaces in 4-manifolds after a single stabilization.
\newblock {\em Adv. Math.}, 341:609--615, 2019.

\bibitem[AKMR15]{DKPR}
Dave Auckly, Hee~Jung Kim, Paul Melvin, and Daniel Ruberman.
\newblock Stable isotopy in four dimensions.
\newblock {\em J. Lond. Math. Soc. (2)}, 91(2):439--463, 2015.

\bibitem[AR25]{auckly2025familiesdiffeomorphismsembeddingspositive}
Dave Auckly and Daniel Ruberman.
\newblock Families of diffeomorphisms, embeddings, and positive scalar
  curvature metrics via {S}eiberg-{W}itten theory.
\newblock {\em arXiv:2501.11892}, 2025.

\bibitem[AS68]{AS68}
M.~F. Atiyah and I.~M. Singer.
\newblock The index of elliptic operators. {III}.
\newblock {\em Ann. of Math. (2)}, 87:546--604, 1968.

\bibitem[AS71]{AS71}
M.~F. Atiyah and I.~M. Singer.
\newblock The index of elliptic operators. {IV}.
\newblock {\em Ann. of Math. (2)}, 93:119--138, 1971.

\bibitem[Bar19]{B19}
David Baraglia.
\newblock Obstructions to smooth group actions on 4-manifolds from families
  {S}eiberg-{W}itten theory.
\newblock {\em Adv. Math.}, 354:106730, 32, 2019.

\bibitem[Bar21]{Ba21}
David Baraglia.
\newblock Constraints on families of smooth 4-manifolds from {B}auer-{F}uruta
  invariants.
\newblock {\em Algebr. Geom. Topol.}, 21(1):317--349, 2021.

\bibitem[Bar23a]{B21}
David Baraglia.
\newblock Non-trivial smooth families of {$K3$} surfaces.
\newblock {\em Math. Ann.}, 387(3-4):1719--1744, 2023.

\bibitem[Bar23b]{B20}
David Baraglia.
\newblock Tautological classes of definite 4-manifolds.
\newblock {\em Geom. Topol.}, 27(2):641--698, 2023.

\bibitem[Bar24]{B202}
David Baraglia.
\newblock An adjunction inequality obstruction to isotopy of embedded surfaces
  in 4-manifolds.
\newblock {\em Math. Res. Lett.}, 31(2):329--352, 2024.

\bibitem[Bau04a]{Ba04}
Stefan Bauer.
\newblock Refined {S}eiberg-{W}itten invariants.
\newblock In {\em Different faces of geometry}, volume~3 of {\em Int. Math.
  Ser. (N. Y.)}, pages 1--46. Kluwer/Plenum, New York, 2004.

\bibitem[Bau04b]{B04}
Stefan Bauer.
\newblock A stable cohomotopy refinement of {S}eiberg-{W}itten invariants.
  {II}.
\newblock {\em Invent. Math.}, 155(1):21--40, 2004.

\bibitem[BERW17]{BER-W17}
Boris Botvinnik, Johannes Ebert, and Oscar Randal-Williams.
\newblock Infinite loop spaces and positive scalar curvature.
\newblock {\em Invent. Math.}, 209(3):749--835, 2017.

\bibitem[BF04]{BF04}
Stefan Bauer and Mikio Furuta.
\newblock A stable cohomotopy refinement of {S}eiberg-{W}itten invariants. {I}.
\newblock {\em Invent. Math.}, 155(1):1--19, 2004.

\bibitem[BK20]{BK20}
David Baraglia and Hokuto Konno.
\newblock A gluing formula for families {S}eiberg-{W}itten invariants.
\newblock {\em Geom. Topol.}, 24(3):1381--1456, 2020.

\bibitem[BK22]{BK19}
David Baraglia and Hokuto Konno.
\newblock On the {B}auer-{F}uruta and {S}eiberg-{W}itten invariants of families
  of 4-manifolds.
\newblock {\em J. Topol.}, 15(2):505--586, 2022.

\bibitem[BK23]{BK21}
David Baraglia and Hokuto Konno.
\newblock A note on the {N}ielsen realization problem for {$K3$} surfaces.
\newblock {\em Proc. Amer. Math. Soc.}, 151(9):4079--4087, 2023.

\bibitem[BK26]{baraglia2024irreducible4manifoldsadmitexotic}
David Baraglia and Hokuto Konno.
\newblock Irreducible 4-manifolds can admit exotic diffeomorphisms.
\newblock {\em Duke Math. J.}, 175(4):717--733, 2026.

\bibitem[Blo11]{Blo11}
Jonathan~M. Bloom.
\newblock A link surgery spectral sequence in monopole {F}loer homology.
\newblock {\em Adv. Math.}, 226(4):3216--3281, 2011.

\bibitem[BS13]{BS13}
R.~\.{I}nan\c{c} Baykur and Nathan Sunukjian.
\newblock Round handles, logarithmic transforms and smooth 4-manifolds.
\newblock {\em J. Topol.}, 6(1):49--63, 2013.

\bibitem[CJS95]{CJS95}
R.~L. Cohen, J.~D.~S. Jones, and G.~B. Segal.
\newblock Floer's infinite-dimensional {M}orse theory and homotopy theory.
\newblock In {\em The {F}loer memorial volume}, volume 133 of {\em Progr.
  Math.}, pages 297--325. Birkh\"{a}user, Basel, 1995.

\bibitem[DK90]{DK90}
S.~K. Donaldson and P.~B. Kronheimer.
\newblock {\em The geometry of four-manifolds}.
\newblock Oxford Mathematical Monographs. The Clarendon Press, Oxford
  University Press, New York, 1990.
\newblock Oxford Science Publications.

\bibitem[Don83]{Do83}
S.~K. Donaldson.
\newblock An application of gauge theory to four-dimensional topology.
\newblock {\em J. Differential Geom.}, 18(2):279--315, 1983.

\bibitem[Don86]{Do86}
S.~K. Donaldson.
\newblock Connections, cohomology and the intersection forms of
  {$4$}-manifolds.
\newblock {\em J. Differential Geom.}, 24(3):275--341, 1986.

\bibitem[Don90]{Do90}
S.~K. Donaldson.
\newblock Polynomial invariants for smooth four-manifolds.
\newblock {\em Topology}, 29(3):257--315, 1990.

\bibitem[Elk95]{Elk95}
Noam~D. Elkies.
\newblock A characterization of the {${\bf Z}^n$} lattice.
\newblock {\em Math. Res. Lett.}, 2(3):321--326, 1995.

\bibitem[EO91]{EbelingOkonekdiffeomorphism91}
Wolfgang Ebeling and Christian Okonek.
\newblock On the diffeomorphism groups of certain algebraic surfaces.
\newblock {\em Enseign. Math. (2)}, 37(3-4):249--262, 1991.

\bibitem[FKM01]{FKM01}
M.~Furuta, Y.~Kametani, and N.~Minami.
\newblock Stable-homotopy {S}eiberg-{W}itten invariants for rational cohomology
  {$K3\#K3$}'s.
\newblock {\em J. Math. Sci. Univ. Tokyo}, 8(1):157--176, 2001.

\bibitem[FM88a]{FM88}
Robert Friedman and John~W. Morgan.
\newblock On the diffeomorphism types of certain algebraic surfaces. {I}.
\newblock {\em J. Differential Geom.}, 27(2):297--369, 1988.

\bibitem[FM88b]{FM882}
Robert Friedman and John~W. Morgan.
\newblock On the diffeomorphism types of certain algebraic surfaces. {II}.
\newblock {\em J. Differential Geom.}, 27(3):371--398, 1988.

\bibitem[FM12]{FM12}
Benson Farb and Dan Margalit.
\newblock {\em A primer on mapping class groups}, volume~49 of {\em Princeton
  Mathematical Series}.
\newblock Princeton University Press, Princeton, NJ, 2012.

\bibitem[Fre82]{Fre82}
Michael~Hartley Freedman.
\newblock The topology of four-dimensional manifolds.
\newblock {\em J. Differential Geometry}, 17(3):357--453, 1982.

\bibitem[Fr{\o}96]{Fr96}
Kim~A. Fr{\o}yshov.
\newblock The {S}eiberg-{W}itten equations and four-manifolds with boundary.
\newblock {\em Math. Res. Lett.}, 3(3):373--390, 1996.

\bibitem[Fr{\o}02]{Fr02}
Kim~A. Fr{\o}yshov.
\newblock Equivariant aspects of {Y}ang-{M}ills {F}loer theory.
\newblock {\em Topology}, 41(3):525--552, 2002.

\bibitem[Fr{\o}04]{Fr04}
Kim~A. Fr{\o}yshov.
\newblock An inequality for the {$h$}-invariant in instanton {F}loer theory.
\newblock {\em Topology}, 43(2):407--432, 2004.

\bibitem[Fr{\o}10]{Fr10}
Kim~A. Fr{\o}yshov.
\newblock Monopole {F}loer homology for rational homology 3-spheres.
\newblock {\em Duke Math. J.}, 155(3):519--576, 2010.

\bibitem[FU91]{FU91}
Daniel~S. Freed and Karen~K. Uhlenbeck.
\newblock {\em Instantons and four-manifolds}, volume~1 of {\em Mathematical
  Sciences Research Institute Publications}.
\newblock Springer-Verlag, New York, second edition, 1991.

\bibitem[Fur]{FurutaBonn}
Mikio Furuta.
\newblock Stable homotopy version of {S}eiberg-{W}itten invariant.
\newblock {\em Preprint}.

\bibitem[Fur01]{Fu01}
M.~Furuta.
\newblock Monopole equation and the {$\frac{11}{8}$}-conjecture.
\newblock {\em Math. Res. Lett.}, 8(3):279--291, 2001.

\bibitem[Fur02]{F02}
M.~Furuta.
\newblock Finite dimensional approximations in geometry.
\newblock In {\em Proceedings of the {I}nternational {C}ongress of
  {M}athematicians, {V}ol. {II} ({B}eijing, 2002)}, pages 395--403. Higher Ed.
  Press, Beijing, 2002.

\bibitem[GGH{\etalchar{+}}26]{gabai2023pseudoisotopies}
David Gabai, David Gay, Daniel Hartman, Vyacheslav Krushkal, and Mark Powell.
\newblock Pseudo-isotopies of simply connected 4-manifolds.
\newblock {\em Forum Math. Pi}, 14:Paper No. e9, 2026.

\bibitem[Gia09]{Gian09}
Jeffrey Giansiracusa.
\newblock The diffeomorphism group of a {$K3$} surface and {N}ielsen
  realization.
\newblock {\em J. Lond. Math. Soc. (2)}, 79(3):701--718, 2009.

\bibitem[GKT21]{GKT21}
Jeffrey Giansiracusa, Alexander Kupers, and Bena Tshishiku.
\newblock Characteristic classes of bundles of {K}3 manifolds and the {N}ielsen
  realization problem.
\newblock {\em Tunis. J. Math.}, 3(1):75--92, 2021.

\bibitem[Hat80]{Hat80}
A.~E. Hatcher.
\newblock Linearization in {$3$}-dimensional topology.
\newblock In {\em Proceedings of the {I}nternational {C}ongress of
  {M}athematicians ({H}elsinki, 1978)}, pages 463--468. Acad. Sci. Fennica,
  Helsinki, 1980.

\bibitem[HM74]{HM74}
Morris~W. Hirsch and Barry Mazur.
\newblock {\em Smoothings of piecewise linear manifolds}.
\newblock Annals of Mathematics Studies, No. 80. Princeton University Press,
  Princeton, N. J.; University of Tokyo Press, Tokyo, 1974.

\bibitem[IKMT25]{IKMT25}
Nobuo Iida, Hokuto Konno, Anubhav Mukherjee, and Masaki Taniguchi.
\newblock Diffeomorphisms of 4-manifolds with boundary and exotic embeddings.
\newblock {\em Math. Ann.}, 391(2):1845--1897, 2025.

\bibitem[KKN21]{KKN21}
Tsuyoshi Kato, Hokuto Konno, and Nobuhiro Nakamura.
\newblock Rigidity of the mod 2 families {S}eiberg-{W}itten invariants and
  topology of families of spin 4-manifolds.
\newblock {\em Compos. Math.}, 157(4):770--808, 2021.

\bibitem[KLMM24]{KLMME2}
Hokuto Konno, Jianfeng Lin, Anubhav Mukherjee, and Juan
  Mu{\~n}oz{-}Ech{\'a}niz.
\newblock The monodromy diffeomorphism of weighted singularities and
  {S}eiberg--{W}itten theory.
\newblock {\em arXiv:2411.12202}, 2024.

\bibitem[KLMME24]{KLMME}
Hokuto Konno, Jianfeng Lin, Anubhav Mukherjee, and Juan
  Mu^^c3^^b1oz-Ech^^c3^^a1niz.
\newblock On four-dimensional {D}ehn twists and {M}ilnor fibrations.
\newblock {\em arXiv:2409.11961}, 2024.
\newblock to appear in Duke Math. J.

\bibitem[KLS18]{KLS18}
Tirasan Khandhawit, Jianfeng Lin, and Hirofumi Sasahira.
\newblock Unfolded {S}eiberg-{W}itten {F}loer spectra, {I}: {D}efinition and
  invariance.
\newblock {\em Geom. Topol.}, 22(4):2027--2114, 2018.

\bibitem[KM94]{KM94}
P.~B. Kronheimer and T.~S. Mrowka.
\newblock The genus of embedded surfaces in the projective plane.
\newblock {\em Math. Res. Lett.}, 1(6):797--808, 1994.

\bibitem[KM02]{KM02}
Peter~B. Kronheimer and Ciprian Manolescu.
\newblock Periodic {F}loer pro-spectra from the {S}eiberg-{W}itten equations.
\newblock {\em arXiv:math/0203243}, 2002.

\bibitem[KM07]{KM07}
Peter Kronheimer and Tomasz Mrowka.
\newblock {\em Monopoles and three-manifolds}, volume~10 of {\em New
  Mathematical Monographs}.
\newblock Cambridge University Press, Cambridge, 2007.

\bibitem[KM11]{KM11}
P.~B. Kronheimer and T.~S. Mrowka.
\newblock Khovanov homology is an unknot-detector.
\newblock {\em Publ. Math. Inst. Hautes \'{E}tudes Sci.}, (113):97--208, 2011.

\bibitem[KM20]{KM20}
P.~B. Kronheimer and T.~S. Mrowka.
\newblock The {D}ehn twist on a sum of two {$K3$} surfaces.
\newblock {\em Math. Res. Lett.}, 27(6):1767--1783, 2020.

\bibitem[KMOS07]{KMOS07}
P.~Kronheimer, T.~Mrowka, P.~Ozsv\'{a}th, and Z.~Szab\'{o}.
\newblock Monopoles and lens space surgeries.
\newblock {\em Ann. of Math. (2)}, 165(2):457--546, 2007.

\bibitem[KMT23]{konno-mallick-taniguchi}
Hokuto Konno, Abhishek Mallick, and Masaki Taniguchi.
\newblock Exotic {D}ehn twists on 4-manifolds.
\newblock {\em arXiv:2306.08607}, 2023.
\newblock to appear in Geom. Topol.

\bibitem[KN23]{KN20}
Hokuto Konno and Nobuhiro Nakamura.
\newblock Constraints on families of smooth 4-manifolds from {${\rm
  Pin}^{-}(2)$}-monopole.
\newblock {\em Algebr. Geom. Topol.}, 23(1):419--438, 2023.

\bibitem[Kod64]{Kod64}
K.~Kodaira.
\newblock On the structure of compact complex analytic surfaces. {I}.
\newblock {\em Amer. J. Math.}, 86:751--798, 1964.

\bibitem[Kon]{KonnoRonsetsu2}
Hokuto Konno.
\newblock Diffeomorphism groups and gauge theory for families.
\newblock {\em S\=ugaku, to appear}.

\bibitem[Kon16]{K16}
Hokuto Konno.
\newblock Bounds on genus and configurations of embedded surfaces in
  4-manifolds.
\newblock {\em J. Topol.}, 9(4):1130--1152, 2016.

\bibitem[Kon19]{K19}
Hokuto Konno.
\newblock Positive scalar curvature and higher-dimensional families of
  {S}eiberg-{W}itten equations.
\newblock {\em J. Topol.}, 12(4):1246--1265, 2019.

\bibitem[Kon21]{K21}
Hokuto Konno.
\newblock Characteristic classes via 4-dimensional gauge theory.
\newblock {\em Geom. Topol.}, 25(2):711--773, 2021.

\bibitem[Kon22]{K17}
Hokuto Konno.
\newblock A cohomological {S}eiberg-{W}itten invariant emerging from the
  adjunction inequality.
\newblock {\em J. Topol.}, 15(1):108--167, 2022.

\bibitem[KPT26]{KangParkTaniguchi}
Sungkyung Kang, JungHwan Park, and Masaki Taniguchi.
\newblock Exotic {D}ehn twists and homotopy coherent group actions.
\newblock {\em Invent. Math.}, 243(1):209--241, 2026.

\bibitem[Kro]{Kropre}
Peter Kronheimer.
\newblock Some non-trivial families of symplectic structures.

\bibitem[KS77]{KS77}
Robion~C. Kirby and Laurence~C. Siebenmann.
\newblock {\em Foundational essays on topological manifolds, smoothings, and
  triangulations}.
\newblock Annals of Mathematics Studies, No. 88. Princeton University Press,
  Princeton, N.J.; University of Tokyo Press, Tokyo, 1977.
\newblock With notes by John Milnor and Michael Atiyah.

\bibitem[KT22]{KT20}
Hokuto Konno and Masaki Taniguchi.
\newblock The groups of diffeomorphisms and homeomorphisms of 4-manifolds with
  boundary.
\newblock {\em Adv. Math.}, 409:Paper No. 108627, 58, 2022.

\bibitem[KW75]{KW75}
Jerry~L. Kazdan and F.~W. Warner.
\newblock Scalar curvature and conformal deformation of {R}iemannian structure.
\newblock {\em J. Differential Geometry}, 10:113--134, 1975.

\bibitem[L\"98]{Lonnediffeomorphism98}
Michael L\"onne.
\newblock On the diffeomorphism groups of elliptic surfaces.
\newblock {\em Math. Ann.}, 310(1):103--117, 1998.

\bibitem[Lin17a]{FLin17a}
Francesco Lin.
\newblock {${\rm Pin}(2)$}-monopole {F}loer homology, higher compositions and
  connected sums.
\newblock {\em J. Topol.}, 10(4):921--969, 2017.

\bibitem[Lin17b]{FLin17}
Francesco Lin.
\newblock The surgery exact triangle in {${\rm Pin}(2)$}-monopole {F}loer
  homology.
\newblock {\em Algebr. Geom. Topol.}, 17(5):2915--2960, 2017.

\bibitem[Lin18]{FLin18}
Francesco Lin.
\newblock A {M}orse-{B}ott approach to monopole {F}loer homology and the
  triangulation conjecture.
\newblock {\em Mem. Amer. Math. Soc.}, 255(1221):v+162, 2018.

\bibitem[Lin22]{lin2022family}
Jianfeng Lin.
\newblock The family {S}eiberg-{W}itten invariant and nonsymplectic loops of
  diffeomorphisms.
\newblock {\em arXiv:2208.12082}, 2022.

\bibitem[Lin23]{JL20}
Jianfeng Lin.
\newblock Isotopy of the {D}ehn twist on {$K3\, \#\, K3$} after a single
  stabilization.
\newblock {\em Geom. Topol.}, 27(5):1987--2012, 2023.

\bibitem[LL01]{LiLiu01}
Tian-Jun Li and Ai-Ko Liu.
\newblock Family {S}eiberg-{W}itten invariants and wall crossing formulas.
\newblock {\em Comm. Anal. Geom.}, 9(4):777--823, 2001.

\bibitem[LM18]{LM18}
Tye Lidman and Ciprian Manolescu.
\newblock The equivalence of two {S}eiberg-{W}itten {F}loer homologies.
\newblock {\em Ast\'{e}risque}, (399):vii+220, 2018.

\bibitem[LM25]{LM21}
Jianfeng Lin and Anubhav Mukherjee.
\newblock Family {B}auer-{F}uruta invariant, exotic surfaces and {S}male
  conjecture.
\newblock {\em J. Assoc. Math. Res.}, 3(2):237--277, 2025.

\bibitem[LRS18]{LRS18}
Jianfeng Lin, Daniel Ruberman, and Nikolai Saveliev.
\newblock On the {F}r\o yshov invariant and monopole {L}efschetz number.
\newblock 2018.

\bibitem[LS14]{LS14}
Robert Lipshitz and Sucharit Sarkar.
\newblock A {K}hovanov stable homotopy type.
\newblock {\em J. Amer. Math. Soc.}, 27(4):983--1042, 2014.

\bibitem[LX23]{lin2023configuration}
Jianfeng Lin and Yi~Xie.
\newblock Configuration space integrals and formal smooth structures.
\newblock {\em arXiv:2310.14156}, 2023.

\bibitem[Man03]{Man03}
Ciprian Manolescu.
\newblock Seiberg-{W}itten-{F}loer stable homotopy type of three-manifolds with
  {$b_1=0$}.
\newblock {\em Geom. Topol.}, 7:889--932, 2003.

\bibitem[Man14]{Ma14}
Ciprian Manolescu.
\newblock On the intersection forms of spin four-manifolds with boundary.
\newblock {\em Math. Ann.}, 359(3-4):695--728, 2014.

\bibitem[Man16]{Man16}
Ciprian Manolescu.
\newblock Pin(2)-equivariant {S}eiberg-{W}itten {F}loer homology and the
  triangulation conjecture.
\newblock {\em J. Amer. Math. Soc.}, 29(1):147--176, 2016.

\bibitem[Mat86]{Matu86}
Takao Matumoto.
\newblock On diffeomorphisms of a {$K3$} surface.
\newblock In {\em Algebraic and topological theories ({K}inosaki, 1984)}, pages
  616--621. Kinokuniya, Tokyo, 1986.

\bibitem[Mil07]{Mil07}
John Milnor.
\newblock {\em Collected papers of {J}ohn {M}ilnor. {III}}.
\newblock American Mathematical Society, Providence, RI, 2007.
\newblock Differential topology.

\bibitem[Miy24]{miyazawaDehn}
Jin Miyazawa.
\newblock Boundary {D}ehn twists on {M}ilnor fibers and {F}amily
  {B}auer--{F}uruta invariants.
\newblock {\em arXiv:2410.21742}, 2024.

\bibitem[Moi52]{Moi52}
Edwin~E. Moise.
\newblock Affine structures in {$3$}-manifolds. {V}. {T}he triangulation
  theorem and {H}auptvermutung.
\newblock {\em Ann. of Math. (2)}, 56:96--114, 1952.

\bibitem[Mor96]{Mo96}
John~W. Morgan.
\newblock {\em The {S}eiberg-{W}itten equations and applications to the
  topology of smooth four-manifolds}, volume~44 of {\em Mathematical Notes}.
\newblock Princeton University Press, Princeton, NJ, 1996.

\bibitem[MS21]{MS21}
Ciprian Manolescu and Sucharit Sarkar.
\newblock A knot {F}loer stable homotopy type.
\newblock {\em arXiv:2108.13566}, 2021.

\bibitem[Mu{\~n}25]{munozechaniz2025configurationslagrangianspheresk3}
Juan Mu{\~n}oz{-}Ech{\'a}niz.
\newblock Configurations of {L}agrangian spheres in $k3$ surfaces.
\newblock {\em arXiv:2507.15039}, 2025.

\bibitem[MW09]{MW09}
Christoph M\"{u}ller and Christoph Wockel.
\newblock Equivalences of smooth and continuous principal bundles with
  infinite-dimensional structure group.
\newblock {\em Adv. Geom.}, 9(4):605--626, 2009.

\bibitem[Nak03]{Naka03}
Nobuhiro Nakamura.
\newblock The {S}eiberg-{W}itten equations for families and diffeomorphisms of
  4-manifolds.
\newblock {\em Asian J. Math.}, 7(1):133--138, 2003.

\bibitem[Nak10]{Naka10}
Nobuhiro Nakamura.
\newblock Smoothability of {$\Bbb Z\times\Bbb Z$}-actions on 4-manifolds.
\newblock {\em Proc. Amer. Math. Soc.}, 138(8):2973--2978, 2010.

\bibitem[Nak13]{Naka13}
Nobuhiro Nakamura.
\newblock {$\rm{Pin}^-(2)$}-monopole equations and intersection forms with
  local coefficients of four-manifolds.
\newblock {\em Math. Ann.}, 357(3):915--939, 2013.

\bibitem[Nak15]{Naka15}
Nobuhiro Nakamura.
\newblock {$\rm{Pin}^-(2)$}-monopole invariants.
\newblock {\em J. Differential Geom.}, 101(3):507--549, 2015.

\bibitem[Nic00]{Ni20}
Liviu~I. Nicolaescu.
\newblock {\em Notes on {S}eiberg-{W}itten theory}, volume~28 of {\em Graduate
  Studies in Mathematics}.
\newblock American Mathematical Society, Providence, RI, 2000.

\bibitem[OS03]{OS03a}
Peter Ozsv\'{a}th and Zolt\'{a}n Szab\'{o}.
\newblock Absolutely graded {F}loer homologies and intersection forms for
  four-manifolds with boundary.
\newblock {\em Adv. Math.}, 173(2):179--261, 2003.

\bibitem[Per86]{P86}
B.~Perron.
\newblock Pseudo-isotopies et isotopies en dimension quatre dans la
  cat\'{e}gorie topologique.
\newblock {\em Topology}, 25(4):381--397, 1986.

\bibitem[Qui86]{Q86}
Frank Quinn.
\newblock Isotopy of {$4$}-manifolds.
\newblock {\em J. Differential Geom.}, 24(3):343--372, 1986.

\bibitem[{Rad}25]{Ra1925}
T.~{Rad\'o}.
\newblock {\"Uber den Begriff der Riemannschen Fl\"ache.}
\newblock {\em {Acta Litt. Sci. Szeged}}, 2:101--121, 1925.

\bibitem[Rub98]{Rub98}
Daniel Ruberman.
\newblock An obstruction to smooth isotopy in dimension {$4$}.
\newblock {\em Math. Res. Lett.}, 5(6):743--758, 1998.

\bibitem[Rub99]{Rub99}
Daniel Ruberman.
\newblock A polynomial invariant of diffeomorphisms of 4-manifolds.
\newblock In {\em Proceedings of the {K}irbyfest ({B}erkeley, {CA}, 1998)},
  volume~2 of {\em Geom. Topol. Monogr.}, pages 473--488. Geom. Topol. Publ.,
  Coventry, 1999.

\bibitem[Rub01]{Rub01}
Daniel Ruberman.
\newblock Positive scalar curvature, diffeomorphisms and the {S}eiberg-{W}itten
  invariants.
\newblock {\em Geom. Topol.}, 5:895--924, 2001.

\bibitem[Rud16]{Rud16}
Yuli Rudyak.
\newblock {\em Piecewise linear structures on topological manifolds}.
\newblock World Scientific Publishing Co. Pte. Ltd., Hackensack, NJ, 2016.

\bibitem[San23]{S21}
Taketo Sano.
\newblock A {B}ar-{N}atan homotopy type.
\newblock {\em Internat. J. Math.}, 34(2):Paper No. 2350008, 45, 2023.

\bibitem[Smi20]{Smi202}
Gleb Smirnov.
\newblock Seidel's theorem via gauge theory.
\newblock {\em arXiv:2010.03361}, 2020.

\bibitem[Smi22a]{Smi20}
Gleb Smirnov.
\newblock From flops to diffeomorphism groups.
\newblock {\em Geom. Topol.}, 26(2):875--898, 2022.

\bibitem[Smi22b]{Smi21}
Gleb Smirnov.
\newblock Symplectic mapping class groups of {K}3 surfaces and
  {S}eiberg-{W}itten invariants.
\newblock {\em Geom. Funct. Anal.}, 32(2):280--301, 2022.

\bibitem[Smi23]{Smi23}
Gleb Smirnov.
\newblock Symplectic mapping class groups of blowups of tori.
\newblock {\em J. Topol.}, 16(3):877--898, 2023.

\bibitem[SS25]{SS21}
Hirofumi Sasahira and Matthew Stoffregen.
\newblock {\em Seiberg-{W}itten {F}loer spectra for {$b_1>0$}}, volume~17 of
  {\em Memoirs of the European Mathematical Society}.
\newblock EMS Press, Berlin, 2025.

\bibitem[Szy10]{Szy10}
Markus Szymik.
\newblock Characteristic cohomotopy classes for families of 4-manifolds.
\newblock {\em Forum Math.}, 22(3):509--523, 2010.

\bibitem[Tau95]{Tau95}
Clifford~Henry Taubes.
\newblock More constraints on symplectic forms from {S}eiberg-{W}itten
  invariants.
\newblock {\em Math. Res. Lett.}, 2(1):9--13, 1995.

\bibitem[Tau96]{Tau96}
Clifford~H. Taubes.
\newblock {${\rm SW}\Rightarrow{\rm Gr}$}: from the {S}eiberg-{W}itten
  equations to pseudo-holomorphic curves.
\newblock {\em J. Amer. Math. Soc.}, 9(3):845--918, 1996.

\bibitem[Wal64]{Wa64}
C.~T.~C. Wall.
\newblock Diffeomorphisms of {$4$}-manifolds.
\newblock {\em J. London Math. Soc.}, 39:131--140, 1964.

\bibitem[Wat18]{Wa18}
Tadayuki Watanabe.
\newblock Some exotic nontrivial elements of the rational homotopy groups of
  {$\mathrm{Diff}(S^{4})$}.
\newblock {\em arXiv:1812.02448}, 2018.

\bibitem[Wit94]{W94}
Edward Witten.
\newblock Monopoles and four-manifolds.
\newblock {\em Math. Res. Lett.}, 1(6):769--796, 1994.

\end{thebibliography}

\end{document}